\newcommand{\Pp}{\mathbb{P}}
\newcommand{\R}{\mathbb{R}}
\newcommand{\e}{\mathrm{e}}
\newcommand{\T}{\mathsf{T}}
\newcommand{\dist}[0]{\operatorname{dist}}
\newcommand{\supp}[0]{\operatorname{supp}}
\newcommand{\sign}[0]{\operatorname{sgn}}
\newcommand{\eps}[0]{\varepsilon}
\newcommand{\UMD}{\operatorname{UMD}}
\theoremstyle{plain}
\newtheorem{thm}[equation]{Theorem}
\newtheorem{lem}[equation]{Lemma}
\newtheorem{prop}[equation]{Proposition}
\newtheorem{cor}[equation]{Corollary}
\theoremstyle{definition}
\theoremstyle{remark}
\newtheorem{rem}[equation]{Remark}
\numberwithin{equation}{section}
\author[F. Di Plinio]{Francesco Di Plinio}
\address[F. Di Plinio]{Department of Mathematics, Washington University in St. Louis, One Brookings Drive,  St. Louis,  MO 63130-4899, USA}
\email{francesco.diplinio@wustl.edu}
\author[K.\ Li]{Kangwei Li}
\address[K.\ Li]{Center for Applied Mathematics, Tianjin University, Weijin Road 92, 300072 Tianjin, China}
\email{kangwei.nku@gmail.com}
\author[H.\ Martikainen]{Henri Martikainen}
\address[H.\ Martikainen]{Department of Mathematics and Statistics, University of Helsinki, P.O.B. 68, FI-00014 University of Helsinki, Finland}
\email{henri.martikainen@helsinki.fi}
\author[E. Vuorinen]{Emil Vuorinen}
\address[E. Vuorinen]{Centre for Mathematical Sciences, University of Lund, P.O.B. 118, 22100 Lund, Sweden}
\email{j.e.vuorin@gmail.com}
\title[Banach-valued modulation invariant singular integrals]{Banach-valued multilinear singular integrals \\ with modulation invariance}
\subjclass[2010]{42B20}
\keywords{bilinear Hilbert transform, singular integrals, modulation invariance,    UMD spaces, phase-space projections, interpolation spaces}
\thanks
{F. Di Plinio has been
 partially supported by the National Science Foundation under the grant   NSF-DMS-1800628.
\\
H. Martikainen was supported by the Academy of Finland through the grants 294840, 306901 and  327271  and by the three-year research grant 75160010 of the University of Helsinki.
He is a member of the Finnish Centre of Excellence in Analysis and Dynamics Research supported by the Academy of Finland (project No. 307333).
\\
E. Vuorinen was supported  by the
Jenny and Antti Wihuri Foundation.}
\begin{document}

\begin{abstract}
We prove that the class of   trilinear multiplier forms with singularity over a one dimensional subspace, including the bilinear Hilbert transform, admit bounded  $L^p$-extension to triples of intermediate $\UMD$ spaces. No other assumption, for instance of Rademacher maximal function type, is made on the triple of $\UMD$ spaces.
%The $L^p$-boundedness of the bilinear Hilbert transform on $\UMD$ spaces is motivated within the context of quantified pointwise convergence of bilinear ergodic averages of Banach space valued functions.
Among the  novelties in our analysis is an extension of the phase-space projection technique to the $\mathrm{UMD}$-valued setting. This is then employed to obtain appropriate single tree estimates by appealing to the  $\mathrm{UMD}$-valued bound for bilinear Calder\'on-Zygmund operators recently obtained  by the same authors.  \end{abstract}
\maketitle

\section{Introduction and main results}

Let ${\mathcal{X}_k}, k=1,2,3, $ be Banach spaces with a trilinear contraction ${\mathcal{X}_1} \times {\mathcal{X}_2} \times {\mathcal{X}_3} \to \mathbb C$ which we denote by $(e_1,e_2,e_3) \mapsto e_1e_2e_3=\prod_{k=1}^3 e_k$. To a multiplier $m$ defined on the orthogonal complement $\Gamma$ of $(1,1,1)\in \R^3$, we may associate the trilinear form
\begin{equation}
\label{e:Lambdam}
\Lambda_m (f_1,f_2,f_3) = \int_\Gamma m(\xi) \left(\prod_{k=1}^3 \widehat f_k(\xi) \right) \mathrm{d} \xi
\end{equation}
acting  on functions $f_k \in \mathcal S(\R)\otimes {\mathcal{X}_k}, k=1,2,3,$ where the former is the Schwartz class. This article is concerned with multipliers $m$ whose  singularity lies on a one-dimensional subspace perpendicular to   a unit vector $\beta \in \Gamma$ which is nondegenerate in the sense that
\begin{equation}
\label{e:nondeg}
\Delta(\beta) \coloneqq \min_{j\neq k} |\beta_j-\beta_k|>0,
\end{equation}
and satisfies for all multi-indices $\alpha$
\begin{equation}
\label{decay}
 \sup_{ \xi \in\Gamma }\big(  \dist(  \xi,  \beta ^\perp)\big)^\alpha \big| \partial_\alpha m ({\xi})\big| \lesssim_\alpha 1.
\end{equation}
Assumption \eqref{decay} is a $\beta^\perp$-modulation invariant version of the Coifman-Meyer condition.
 This class includes the bilinear Hilbert transform with parameter $\beta$, whose dual trilinear multiplier form may be obtained by choosing
\[
m(\xi)= \sign(\xi \cdot \beta).
\]
The bilinear Hilbert transform
\[
T(f_1,f_2)(x)= \int f_1(x-b_1 t) f_2(x-b_2 t) \, \frac{\mathrm{d} t}{t}, \qquad b_j=\beta_j-\beta_3, \, j=1,2,
\]
was first introduced by Calder\'on within the context of $L^p$ estimates for the first commutator of the Cauchy integral along Lipschitz curves. The celebrated articles of Lacey and Thiele \cite{LT1,LT2} contain the first proof of $L^p$ estimates for the bilinear Hilbert transform, while    more general multipliers of the  class \eqref{decay} were treated by Muscalu, Tao and Thiele \cite{MTT}.
\subsection{Main results} In this article, we prove that the trilinear multiplier forms  \eqref{e:Lambdam}, where $m$ is a  multiplier of the class \eqref{decay}, admit  $L^p$-bounded extensions to  triples of intermediate  $\UMD$ Banach spaces. This terminology has been introduced by Hyt\"onen and Lacey \cite{HLC,HLW}: we repeat this definition below and send to \cite{HNVW1} and references therein for background and generalities on $\UMD$ Banach spaces.

 Let $ 2\leq q \leq \infty$ and  $\mathcal X_0, {\mathcal{X}_1}$ be a couple of compatible Banach spaces, { with $\mathcal X_0$ being a $\UMD$ space and $\mathcal X_1$ being a Hilbert space. We say that the   Banach space $\mathcal X$ is $q$-\emph{intermediate} $\UMD$  if \[\mathcal X=[\mathcal X_0,{\mathcal{X}_1}]_{\frac2q}\] namely, $\mathcal X$ is the complex interpolation of a $\UMD$ Banach space with a Hilbert space. Such Banach space $\mathcal X$ is automatically a $\UMD$ space. Notice that  $\mathcal X$ is $q$-intermediate $\UMD$ if and only if its Banach dual $\mathcal X'$ is.

The precise statement of our main result is as follows.
\begin{thm} \label{t:main}
Let $\mathcal X_{j}, j=1,2,3,$ be Banach spaces with Banach duals $\mathcal Y_j=\mathcal X_j'$  and suppose that each $\mathcal X_j$ is $q_{\mathcal X_j}$-intermediate $\UMD$. Assume that
\begin{equation}
\label{e:inttype}
\rho=\sum_{j=1}^3 \frac{1}{q_{\mathcal X_j}} -1 > 0.
\end{equation} Let $\sigma$ be any   permutation of $\{1,2,3\}$,
$m $ be a multiplier satisfying \eqref{decay} and $T_{m,\sigma}$ denote the adjoint bilinear operator to \eqref{e:Lambdam} acting on pairs of  $\mathcal X_{\sigma(1)},\mathcal X_{\sigma(2)}$ functions. Then
\[
\left\|T_{m,\sigma}(f_1,f_2)\right\|_{L^{\frac{p_1p_2}{p_1+p_2}}(\R; \mathcal Y_{\sigma(3)})}
\lesssim \left\|f_{\sigma(1)} \right\|_{L^{p_1}(\R; \mathcal X_{\sigma(1)}) }
\left\|f_{\sigma(2)} \right\|_{L^{p_2}(\R; \mathcal X_{\sigma(2)}) }
\]
whenever
\begin{equation}
\label{e:range}
1<p_1,p_2\leq \infty, \qquad (p_1,p_2) \neq (\infty, \infty), \qquad \left(\frac{1}{p_1},\frac{1}{p_2}\right) \in \mathrm{int}(\mathcal H).
\end{equation}
Here
%\footnote{I wish there was a better way of writing this triangle than the one in \cite{HLP}. We may either copy it from there or find a better expression. For now I give the reference}
 $\mathcal H$ is the hexagon  with vertices $A,B,C,D,E,F$  as follows:
 \begin{align*}
	A:&\quad\left(\textstyle \frac{1}{q_{\mathcal X_1}}-\rho q_{\mathcal X_3},\frac{1}{q_{\mathcal X_2}} \right),\,\qquad\qquad\qquad D:\quad \left(\textstyle\frac{1}{q_{\mathcal X_1}}+\rho q_{\mathcal X_1}-\rho,\frac{1}{q_{\mathcal X_2}} \right),
	\\
	B:&\quad\left(\textstyle\frac{1}{q_{\mathcal X_1}},\frac{1}{q_{\mathcal X_2}}-\rho q_{\mathcal X_3} \right),\,\qquad\qquad\qquad E:\quad \left(\textstyle\frac{1}{q_{\mathcal X_1}},\frac{1}{q_{\mathcal X_2}}+\rho q_{\mathcal X_2}-\rho \right),
	\\
	C:&\quad \left(\textstyle\frac{1}{q_{\mathcal X_1}}+\rho q_{\mathcal X_1}-\rho, \frac{1}{q_{\mathcal X_2}}-\rho q_{\mathcal X_1} \right),\quad\quad F:\quad \left(\textstyle\frac{1}{q_{\mathcal X_1}}-\rho q_{\mathcal X_2}, \frac{1}{q_{\mathcal X_2}}+\rho q_{\mathcal X_2}-\rho\right).
\end{align*}

\end{thm}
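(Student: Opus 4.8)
The plan is to adapt the time--frequency paradigm of Lacey--Thiele and Muscalu--Tao--Thiele, replacing the scalar single tree estimate by a $\UMD$-valued one obtained through phase--space projections and the authors' bilinear Calder\'on--Zygmund theory, and to carry out the global tree counting inside the intermediate $\UMD$ couple; the hexagonal range is recovered at the end by multilinear interpolation. By the self--duality of the intermediate $\UMD$ class and standard reductions it suffices to prove a restricted--type estimate for the trilinear form $\Lambda_m$ on $L^{p_1}(\R;\mathcal X_1)\times L^{p_2}(\R;\mathcal X_2)\times L^{p_3}(\R;\mathcal X_3)$ with $\sum_k p_k^{-1}=1$. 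Following the usual discretization --- a smooth resolution of $\Gamma$ around the singular line $\beta^\perp$ together with a one--parameter average of dilated and translated dyadic lattices --- one writes $\Lambda_m$ as a convergent average of model sums
\[
\Lambda_{\mathbf P}(f_1,f_2,f_3)=\sum_{P\in\mathbf P}|I_P|^{-\frac12}\prod_{k=1}^3\bigl\langle f_k,\phi_{P,k}\bigr\rangle,
\]
where $\mathbf P$ ranges over finite rank--one families of tri--tiles $P=(I_P,\omega_{P,1},\omega_{P,2},\omega_{P,3})$, the wave packets $\phi_{P,k}$ are $L^2$--normalized and adapted to $I_P\times\omega_{P,k}$, the pairings $\langle f_k,\phi_{P,k}\rangle$ are $\mathcal X_k$-valued Bochner integrals combined through the trilinear contraction, and the nondegeneracy \eqref{e:nondeg} ensures that all three frequency components of each tri--tile are genuine. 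It then suffices to bound $|\Lambda_{\mathbf P}|$ uniformly in $\mathbf P$.

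\textbf{The single tree estimate.} Partition $\mathbf P$ into trees $T$ with tops $(I_T,\xi_T)$. For a fixed tree, modulating each $f_k$ by the center of the component $\omega_{T,k}$ --- using the tri--tile sum rule to stay translation invariant and the separation $\Delta(\beta)>0$ to keep the resulting symbol non--degenerate --- one checks that $\Lambda_T$ coincides, up to Schwartz tails, with a bilinear Calder\'on--Zygmund form at scale $I_T$ tested against phase--space localizations $\Pi_{T,k}f_k$ of the inputs to the region $I_T\times\omega_{T,k}$. The novelty is to make sense of the projections $\Pi_{T,k}$ in the Banach setting and to bound them on $L^{p}(\R;\mathcal X_k)$: being smooth Fourier restrictions to an interval followed by a spatial cut--off, they are controlled uniformly in $T$ by the $\UMD$ property, while a Rademacher/square--function estimate handles the superposition of $\{\Pi_{T,k}\}$ over a forest. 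Feeding the localized inputs into the $\UMD$-valued bilinear Calder\'on--Zygmund bound of the same authors yields the single tree estimate
\[
\bigl|\Lambda_T(f_1,f_2,f_3)\bigr|\lesssim |I_T|\prod_{k=1}^3\operatorname{size}_{T,k}(f_k),
\]
with no Rademacher maximal function hypothesis on the $\mathcal X_k$.

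\textbf{Energies, forests and the role of $\rho$.} Introduce $\mathcal X_k$-valued size and energy functionals on subfamilies of $\mathbf P$. The decisive point is the vector--valued energy estimate $\operatorname{energy}_k(f_k)\lesssim \|f_k\|_{L^{p_k}(\R;\mathcal X_k)}$: writing $\mathcal X_k=[\mathcal X_{k,0},\mathcal X_{k,1}]_{\theta_k}$ with $\theta_k=2/q_{\mathcal X_k}$ and $\mathcal X_{k,1}$ Hilbert, one runs the underlying $L^2$ almost orthogonality in the Hilbert factor and transfers it by $\UMD$, obtaining a gain quantified by $\theta_k$. Combined with the size/Carleson estimate and the single tree bound, the decomposition of $\mathbf P$ into forests at dyadic size levels sums in a geometric series, and convergence holds exactly when the accumulated gain beats the tree entropy, i.e. when $\rho=\sum_k q_{\mathcal X_k}^{-1}-1>0$. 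This gives the estimate for a restricted set of $(p_1^{-1},p_2^{-1})$, with constants depending on the $\theta_k$.

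\textbf{Interpolation and the main obstacle.} Varying the weights carried by the Hilbert factors and applying multilinear complex interpolation to this family of restricted estimates, with the couples $\{\mathcal X_{k,0},\mathcal X_{k,1}\}$, yields as feasible region in $(p_1^{-1},p_2^{-1})$ precisely the hexagon $\mathcal H$ with vertices $A,\dots,F$; passing to the adjoint bilinear operators $T_{m,\sigma}$ is then routine. I expect the single tree estimate to be the main obstacle: realizing the tree operator as a genuine bilinear Calder\'on--Zygmund form after a $\UMD$-valued phase--space projection, and controlling these projections on $L^p(\R;\mathcal X_k)$ uniformly over trees and forests, is exactly what allows one to dispense with the Rademacher maximal function assumption of earlier vector--valued treatments. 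A secondary difficulty is the vector--valued energy estimate, where the Hilbertian component of the interpolation couple must be used to recover the $L^2$ orthogonality underpinning the tree counting.
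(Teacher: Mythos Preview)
Your proposal is correct and follows essentially the paper's approach: reduction to tri-tile model sums, a tree estimate obtained by combining $\UMD$-valued phase-space projections with the authors' bilinear Calder\'on--Zygmund bound (this is Proposition~\ref{p:psp} and Lemma~\ref{l:treeest}, and you have correctly identified it as the main novelty), an energy lemma transported from the Hilbert endpoint by complex interpolation in the couple $[\mathcal X_{k,0},\mathcal X_{k,1}]_\theta$ (Lemma~\ref{l:energy}, following \cite{HLC}), and recovery of the hexagon via multilinear restricted weak-type interpolation as in \cite{HLP}. Two minor clarifications: the phase-space projections $\Pi_{T,k}$ are constructed and estimated tree-by-tree only, with no need to control any superposition across a forest; and the final interpolation producing $\mathcal H$ is in the Lebesgue exponents $(p_1^{-1},p_2^{-1})$ rather than in the Banach couples, the complex interpolation in the latter having already been absorbed into the energy lemma.
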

We note in passing that if condition \eqref{e:inttype} holds, the range $\mathrm{int}(\mathcal H)$ is nonempty and in particular contains the region
\[
 q_{\mathcal X_k}<p_k<\infty,\; k=1,2,3, \qquad
p_3\coloneqq\left(\textstyle \frac{p_1p_2}{p_1+p_2} \right)'
\]
which is the analogue of  the local $L^2$ range for the  scalar case, see \cite{LT1}. In addition, we point out  that $\mathrm{int}(\mathcal H)$ may  contain quasi-Banach pairs $(p_1,p_2)$, that is, pairs with $\frac{p_1p_2}{p_1+p_2}<1$. This is easier to see by particularizing  Theorem \ref{t:main} to the case
\[
\mathcal X_1= \mathcal X,\, \mathcal X_2=\mathcal X', \,\mathcal X_3=\mathbb C,
\] as in the following corollary. Herein, quasi-Banach estimates are available if $2<q<3$.
\begin{cor} \label{cor:X} Let $\mathcal X$ be a  $q$-intermediate $\UMD$ space and define the trilinear contraction
\[
(\mathfrak{x},\phi,\lambda) \in \mathcal X \times \mathcal X' \times \mathbb C \mapsto \lambda\phi(\mathfrak{x}).
\]
Let
$m $ be a multiplier satisfying \eqref{decay} and $T_{m}$ denote the adjoint bilinear operator to \eqref{e:Lambdam} acting on pairs of  $ \mathcal X , \mathcal X '$ functions.

Suppose that $2\leq q\leq 3$. Then
\begin{equation}
\label{e:corX2}
\left\|
T_{m}(f_1,f_2) \right\|_{L^{\frac{p_1p_2}{p_1+p_2}}(\R)}
\lesssim \left\|f_1 \right\|_{L^{p_1}(\R; \mathcal X) }
\left\|f_{\sigma(2)} \right\|_{L^{p_2}(\R;\mathcal X') }
\end{equation}
whenever
\begin{equation}
\label{e:corX3}
1+\frac{(q-1)(q-2)}{q(5-q)-2}<p_1,p_2\leq \infty, \qquad  \frac23\left(1+\frac{q-2}{5-q} \right)<\frac{p_1p_2}{p_1+p_2} < \frac{q}{q-2}.
\end{equation}
If $3<q<4 $, then \eqref{e:corX2} holds true if, in addition to \eqref{e:corX3}, the condition
\begin{equation}
\label{e:corX4}
\frac{q^2-3q+1}{q}<\min_{(u,v)\in\{(1,2), (2,1)\}}\left\{
\frac{q-1}{p_u} +\frac{q-2}{p_v}
\right\}\end{equation}
is verified.
\end{cor}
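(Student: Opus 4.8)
The plan is to derive Corollary~\ref{cor:X} as a direct specialization of Theorem~\ref{t:main}, the substance of the argument being the elementary but slightly delicate task of rewriting the polygonal range $\mathrm{int}(\mathcal H)$ explicitly in the coordinates $(p_1,p_2)$. First I record the intermediate-$\UMD$ exponents of the triple $\mathcal X_1=\mathcal X$, $\mathcal X_2=\mathcal X'$, $\mathcal X_3=\mathbb C$: by hypothesis $\mathcal X$ is $q$-intermediate $\UMD$, and this property passes to $\mathcal X'$ as recalled before Theorem~\ref{t:main}, so $q_{\mathcal X_1}=q_{\mathcal X_2}=q$; while $\mathbb C$, being simultaneously a $\UMD$ space and a Hilbert space, satisfies $\mathbb C=[\mathbb C,\mathbb C]_\theta$ for every $\theta$ and is thus $q_3$-intermediate $\UMD$ for any $q_3\in[2,\infty]$, so we take the value $q_{\mathcal X_3}=2$ that maximizes $\rho$. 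With these choices, \eqref{e:inttype} reads $\rho=\tfrac2q-\tfrac12=\tfrac{4-q}{2q}>0$, i.e.\ $q<4$, which together with $q\ge2$ is the standing assumption. Theorem~\ref{t:main} applied to this triple (with $\sigma$ the identity, so that the target is $\mathcal Y_3=\mathbb C$) then gives \eqref{e:corX2} for $(\tfrac1{p_1},\tfrac1{p_2})\in\mathrm{int}(\mathcal H)\cap\eqref{e:range}$, where the vertices of $\mathcal H$ are obtained by substituting $q_{\mathcal X_1}=q_{\mathcal X_2}=q$, $q_{\mathcal X_3}=2$, $\rho=\tfrac{4-q}{2q}$ into the list $A,\dots,F$.

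Next I would convert $\mathcal H$ into its six defining half-planes, in the variables $x=1/p_1$, $y=1/p_2$. The vertical and horizontal sides $CD$ and $EF$ yield $x<\tfrac{5-q}{2}-\tfrac1q$ and $y<\tfrac{5-q}{2}-\tfrac1q$; the sides $AB$ and $DE$ yield $\tfrac{q-2}{q}<x+y<\tfrac{5-q}{2}$; and the two oblique sides $BC$ and $AF$ have inner normals proportional to $(q-2,q-1)$ and $(q-1,q-2)$ and line constant $\tfrac{q^2-3q+1}{q}$, hence read $(q-2)x+(q-1)y>\tfrac{q^2-3q+1}{q}$ and $(q-1)x+(q-2)y>\tfrac{q^2-3q+1}{q}$. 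Using the elementary identities $1+\tfrac{(q-1)(q-2)}{q(5-q)-2}=\tfrac{2q}{q(5-q)-2}$ and $\tfrac23\big(1+\tfrac{q-2}{5-q}\big)=\tfrac{2}{5-q}$, together with $\tfrac{p_1p_2}{p_1+p_2}=(x+y)^{-1}$, one recognizes the first four of these as precisely \eqref{e:corX3}, and the last two as precisely \eqref{e:corX4}; moreover the \eqref{e:range} restrictions $p_i>1$ and $(p_1,p_2)\ne(\infty,\infty)$ are already subsumed, since $\tfrac{5-q}{2}-\tfrac1q\le1$ and $x+y>\tfrac{q-2}{q}\ge0$ on $\mathrm{int}(\mathcal H)$.

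The only genuinely non-formal step is to determine when the oblique constraints \eqref{e:corX4} are implied by \eqref{e:corX3}; this is a short planar computation and is where I expect the main, if modest, obstacle to lie. From $(q-2)x+(q-1)y=(q-2)(x+y)+y$ and the bounds $x+y>\tfrac{q-2}{q}$, $y\ge0$ coming from \eqref{e:corX3}, one obtains $(q-2)x+(q-1)y>\tfrac{(q-2)^2}{q}$; since $(q-2)^2-(q^2-3q+1)=3-q$, this lower bound dominates $\tfrac{q^2-3q+1}{q}$ exactly when $q\le3$, and the $x\leftrightarrow y$ symmetry disposes of the companion inequality. Therefore for $2\le q\le3$ the region \eqref{e:corX3} already lies in $\mathrm{int}(\mathcal H)$ and \eqref{e:corX4} is vacuous, while for $3<q<4$ one has $\tfrac{(q-2)^2}{q}<\tfrac{q^2-3q+1}{q}$, and feasible points of \eqref{e:corX3} near $(x,y)=(\tfrac{q-2}{q},0)$ — for which $x<\tfrac{5-q}{2}-\tfrac1q$ and $x+y<\tfrac{5-q}{2}$ hold with slack — violate \eqref{e:corX4}, so it cannot be discarded. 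This gives the two cases of the statement. Finally, the side remark that quasi-Banach tuples occur exactly for $2<q<3$ follows because $x+y<\tfrac{5-q}{2}$ permits $x+y>1$ only when $q<3$, and then any diagonal point $x=y$ slightly above $\tfrac12$ meets all remaining constraints.
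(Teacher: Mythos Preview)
Your proof is correct and follows exactly the approach the paper intends: the paper does not supply a separate proof of the corollary but simply states that it is obtained by particularizing Theorem~\ref{t:main} to $\mathcal X_1=\mathcal X$, $\mathcal X_2=\mathcal X'$, $\mathcal X_3=\mathbb C$, and you have carried out this specialization in full detail, including the identification of the six sides of $\mathcal H$ with \eqref{e:corX3}--\eqref{e:corX4} and the redundancy analysis for $q\le 3$. One minor remark: your illustrative point $(\tfrac{q-2}{q},0)$ showing that \eqref{e:corX4} is genuinely needed for $q>3$ only satisfies the bound $x<\tfrac{q(5-q)-2}{2q}$ when $q<\tfrac{3+\sqrt{17}}{2}\approx 3.56$; for larger $q$ one should instead take a point near the intersection of the line $x+y=\tfrac{q-2}{q}$ with $y=0$ moved slightly inward, or simply near vertex $B=(\tfrac1q,\tfrac{q-3}{q})$, but this side comment is not needed for the corollary itself.
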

Theorem \ref{t:main} and Corollary \ref{cor:X} further the rather recent line of research on the  extension  of singular operators with modulation invariance properties  to   $\UMD$ Banach spaces \emph{without any UMD lattice function space structure,} or lattice structure altogether: a prototypical example are noncommutative $L^p$ spaces such as the reflexive Schatten-von Neumann subclasses of the algebra of bounded operators on a Hilbert space. This line of research was initiated by Hyt\"onen and Lacey in their proof of boundedness of the Carleson maximal partial Fourier sum operator for intermediate $\UMD$ spaces   in the Walsh \cite{HLW} and Fourier \cite{HLC} setting; see also \cite{HLPC} for Walsh-Carleson variation norm bounds. Subsequently, the same authors and Parissis \cite{HLP} proved the analogue of Theorem \ref{t:main} for the Walsh model of the bilinear Hilbert transform. In fact, the range of exponents $\mathrm{int}(\mathcal H)$ is the same as the one obtained therein for the Walsh model, see \cite[Theorem 9.3]{HLP}.
 Results in the vein of \cite{HLP} were recently reproved by Amenta and Uraltsev in \cite{AU1} as a byproduct of novel Banach-valued outer $L^p$ space embeddings for the Walsh wave packet transform.

The theory of $\UMD$-valued linear singular integrals of Calder\'on-Zygmund type is rooted in the works by Burkholder  \cite{Burk1} and Bourgain \cite{Bou} among others, and has been extensively developed since then, see for instance \cite{JX,KW1,HH,HytPor08,Hyt2005,CH,Weis,GMSS} and the monograph \cite{HNVW1}.  Recent advances have concerned the $\UMD$ extension of multilinear Cald\'eron-Zygmund operators \cite{DLMV1,DLMV2,DO}. The above mentioned references deal with generic $\UMD$ spaces, as opposed to lattices,  and thus develop  fundamentally different  techniques from those of the classical vector-valued theory of e.g.\ Benedek, Calder\'on and Panzone, Fefferman-Stein, Rubio de Francia, which are strictly tied to $A_p$-type weighted norm inequalities.
In a similar contrast, the present article combines novel technical tools in $\UMD$-valued time frequency analysis to the  $\UMD$ interpolation space idea of \cite{HLC}   in order to deal  with \emph{multilinear} modulation invariant operators  on  \emph{non-lattice} $\UMD$ spaces, which are out of reach for typical lattice-based techniques.

Nevertheless, a systematic function space-valued theory for \eqref{e:Lambdam} is quite recent.  The first proof of $\ell^p$-valued bounds for the bilinear Hilbert transform in a wide range of exponents is due to P.\ Silva \cite{Silva12}. In \cite{Silva12}, those estimates have been employed to obtain bounds for the biparameter bilinear operator obtained by tensoring  the bilinear Hilbert transform with a Coifman-Meyer multiplier. Several extensions and refinements of \cite{Silva12} have since appeared, see  e.g. \cite{ALV,BenMusc15,CrM,CDPO}.
In general, as Corollary \ref{cor:X} demonstrates, Theorem \ref{t:main} is outside the scope of the above references, although it does imply a strict subset  of the $\ell^p$ estimates of \cite{Silva12}. We send to \cite{AU1,HLP} for a detailed discussion of this point.

However, to stress the difference with the results of \cite{Silva12} and followups, we would like to showcase here a further application of Theorem \ref{t:main} to a triple of non-function, non-lattice $\UMD$ Banach spaces, in addition to that of Corollary
\ref{cor:X}.
In the corollary that follows we  denote   by $S^p$, $1\leq p<\infty$ the $p$-th Schatten-von Neumann class, namely, the subspace of the von Neumann algebra $ \mathcal B(H)$ of linear bounded operators on a separable Hilbert space defined by the norm
\[\|A\|_{S^p} = \|s_n(A)\|_{\ell^p(n \in \mathbb N)}
\]
where $\{s_n(A):n \in \mathbb N\}$ is the sequence of singular values of $A$, that is eigenvalues of the Hermitian operator $|A|=\sqrt{A^*A}$. Notice that the classes $S^p$ are increasingly nested with $p$ and that the trilinear form
\[
(A_1,A_2,A_3) \in S^{t_1} \times S^{t_2} \times S^{t_3}  \mapsto \mathrm{trace}(A_1 A_2 A_3)
\]
is a contraction provided that
\begin{equation}
\label{superholder} \sum_{k=1}^3 \frac{1}{t_k} \geq 1.
\end{equation}
\begin{cor} \label{cor:sc} Suppose that the exponents $1<t_1,t_2,t_3<\infty$ satisfy, in addition to \eqref{superholder}, the conditions
\begin{equation}
\label{e:schat}
 \rho=\sum_{k=1}^3 \frac{1}{\max\{t_k,(t_k)'\}}-1 > 0.
\end{equation}
Let  $\sigma$ be a permutation  of $\{1,2,3\}$, and
$m $ be  a multiplier satisfying \eqref{decay}
Then the corresponding adjoint bilinear operator $T_{m,\sigma}$ maps
\[
T_{m,\sigma}: {L^{p_1}(  \R; S^{t_{\sigma(1)}}) } \times {L^{p_2}(  \R; S^{t_{\sigma(2)}}) } \to L^{\frac{p_1p_2}{p_1+p_2}}(\R; S^{t_{\sigma(3)}'})
\]
boundedly for all
 $p_1,p_2$ specified by \eqref{e:range}.
\end{cor}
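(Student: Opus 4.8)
The plan is to obtain Corollary \ref{cor:sc} as a direct application of Theorem \ref{t:main}, the only real work being to exhibit each Schatten class $S^{t_j}$ as a $q$-intermediate $\UMD$ space for a value of $q$ close to $\max\{t_j,t_j'\}$, and then to match the hypotheses and the exponent range.

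I would begin by recording two structural facts. First, $S^p$ is a $\UMD$ Banach space for every $1<p<\infty$; this is classical, following e.g.\ from the boundedness of noncommutative martingale transforms (see the discussion in \cite{HNVW1}). Second, $S^2$ is a Hilbert space, being the space of Hilbert--Schmidt operators with inner product $\langle A,B\rangle=\mathrm{trace}(AB^\ast)$. I would then invoke the complex interpolation identity for Schatten classes, $[S^{p_0},S^{p_1}]_\theta=S^{p}$ with $\frac1p=\frac{1-\theta}{p_0}+\frac{\theta}{p_1}$; specializing to $p_1=2$ and $\theta=\frac2q$, this shows that $S^t$ is $q$-intermediate $\UMD$ as soon as $p_0\coloneqq\frac{t(q-2)}{q-t}$ lies in $(1,\infty)$, and a short computation shows that this happens precisely when $q>\max\{t,t'\}$ (with the degenerate case $q=2$, $t=2$ also admissible). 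Thus $S^t$ is $q$-intermediate $\UMD$ for every $q>\max\{t,t'\}$; for $1<t<2$ one may equivalently argue by duality, using $(S^{t})'=S^{t'}$ and the fact, recorded in the excerpt, that the class of $q$-intermediate $\UMD$ spaces is self-dual. Note, however, that the endpoint value $q=\max\{t,t'\}$ is in general \emph{not} attained by this construction: it would force $p_0=\infty$, i.e.\ $\mathcal X_0=\mathcal B(H)$, which is not $\UMD$.

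Next I would set $\mathcal X_j=S^{t_j}$, $\mathcal Y_j=\mathcal X_j'=S^{t_j'}$, so that the target space of $T_{m,\sigma}$ is $\mathcal Y_{\sigma(3)}=S^{t_{\sigma(3)}'}$, exactly as in the statement. Hypothesis \eqref{superholder} makes $(A_1,A_2,A_3)\mapsto\mathrm{trace}(A_1A_2A_3)$ a trilinear contraction on $S^{t_1}\times S^{t_2}\times S^{t_3}$, so the forms \eqref{e:Lambdam} and their adjoints are defined in the sense of Theorem \ref{t:main}. Hypothesis \eqref{e:schat} reads $\sum_{k=1}^3\frac{1}{\max\{t_k,t_k'\}}=1+\rho>1$; by the previous paragraph and continuity, for any exponents $q_{\mathcal X_j}>\max\{t_j,t_j'\}$ chosen close enough to the latter, each $S^{t_j}$ is $q_{\mathcal X_j}$-intermediate $\UMD$ and $\sum_{j=1}^3\frac{1}{q_{\mathcal X_j}}>1$, i.e.\ \eqref{e:inttype} holds. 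Theorem \ref{t:main} then yields the claimed bound for $(1/p_1,1/p_2)$ ranging over the interior of the hexagon $\mathcal H$ attached to $(q_{\mathcal X_1},q_{\mathcal X_2},q_{\mathcal X_3})$.

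Finally, letting $q_{\mathcal X_j}\downarrow\max\{t_j,t_j'\}$ and taking the union of the resulting ranges yields boundedness on the interior of the hexagon $\mathcal H$ of \eqref{e:range}, that is, the one with parameters $\max\{t_1,t_1'\},\max\{t_2,t_2'\},\max\{t_3,t_3'\}$: the vertices $A,\dots,F$ and the quantity $\rho$ depend continuously on $(q_{\mathcal X_1},q_{\mathcal X_2},q_{\mathcal X_3})$, and \eqref{e:schat} keeps the limiting hexagon nondegenerate, so every compact subset of its interior is contained in the interior of the hexagon attached to $(q_{\mathcal X_1},q_{\mathcal X_2},q_{\mathcal X_3})$ once the $q_{\mathcal X_j}$ are sufficiently close to $\max\{t_j,t_j'\}$; one then exhausts $\mathrm{int}(\mathcal H)$ by such sets. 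I expect this last limiting step --- keeping track of the possibly degenerate behaviour of the hexagon's vertices as $q_{\mathcal X_j}\to\max\{t_j,t_j'\}$, an endpoint not itself realized by the interpolation construction --- to be the only delicate point; everything upstream is a verbatim substitution into Theorem \ref{t:main}.
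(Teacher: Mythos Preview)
Your approach is essentially the paper's: apply Theorem \ref{t:main} with $\mathcal X_j=S^{t_j}$ and the trace contraction, using that Schatten classes are intermediate $\UMD$. The paper's proof is a single sentence asserting that $S^p$ is intermediate $\UMD$ of exponent exactly $\max\{p,p'\}$, while you are more scrupulous and only claim $q$-intermediate for $q>\max\{p,p'\}$, then pass to the limit using continuity of the hexagon vertices in the $q_{\mathcal X_j}$. Your caution is justified: forcing $q=\max\{t,t'\}$ in $[S^{p_0},S^2]_{2/q}=S^t$ yields $p_0\in\{1,\infty\}$, neither of which is $\UMD$, so the endpoint is not directly realized by this interpolation scale. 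Your limiting step recovers the full open range $\mathrm{int}(\mathcal H)$ stated in \eqref{e:range} and is the rigorous way to fill the small gap the paper leaves implicit; otherwise the two arguments coincide.
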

Corollary \ref{cor:sc} is obtained from Theorem \ref{t:main} by noticing that $S^p$, $1<p<\infty$ is intermediate $\UMD$ of exponent $\max\{p,p'\}$.
  Similar statements may be obtained for more general non-commutative $L^p$ spaces.  We   send to \cite{PX} for comprehensive definitions and background: a quick harmonic analyst-friendly introduction is given in \cite[Section 3]{DLMV1}.

\subsection{Techniques of proof and novelties} The standard proofs of  $L^p$-bounds for the scalar-valued versions of the forms $\Lambda_m$ in \eqref{e:Lambdam} are articulated in roughly three separate moments. The first is to realize that the forms \eqref{e:Lambdam} lie in the convex hull of suitable discretized model versions, the so-called \emph{tri-tile forms}, displaying the same modulation and translation invariance properties of the condition \eqref{decay}: this step extends \emph{verbatim} to the vector-valued case. We may thus focus on $L^p$-bounds for the model sums.

An essential step of the proof is the decomposition of the model operators into (discretized) multipliers which are adapted to a certain fixed top frequency and localized in space to a top interval. These \emph{tree model sums} are essentially trilinear Calder\'on-Zygmund forms. The contribution of each tree is then controlled by localized space-frequency norms of the involved functions, the so-called \emph{energies} (or sizes). This bound is referred to as \emph{tree estimate}.

In the vector-valued case, this second step has to be adapted in a nontrivial and novel fashion. First of all, the vector-valued \emph{energies}, introduced in \eqref{e:size}, \eqref{e:engdef} must be defined in terms of local $q$-norms of (linear) tree operators rather than simply $\ell^2$ sums of wavelet coefficients coming from each tree. We do so by means of a technical modification of the approach in \cite{HLC}. Second, and most important, we obtain an effective tree estimate by replacing the involved functions with  vector-valued phase-space projections to the space-frequency support of the tree. This extension of the scalar-valued phase-space projections of e.g.\
\cite{DT,MTT2} to $\UMD$ spaces, which may be of independent interest, is carried out in Proposition \ref{p:psp}, and is the main technical novelty of the article.    The tree model sum acts on the phase-space projections roughly as a trilinear CZ multiplier operator, and the $L^p$-norms of the constructed projections are controlled by the corresponding energies. These observations may be used in conjunction with the $L^p$-bound for $\UMD$ extensions of bilinear CZ operators, recently obtained  by the authors of this paper in \cite{DLMV1},  to produce   the tree estimate of Lemma \ref{l:treeest}.

Finally, the recomposition  of the bounds obtained for each tree into a global estimate relies on almost-orthogonality considerations. To export this almost-orthogonality   to the vector-valued scenario, we rely, as in previous literature \cite{HLC,HLP,AU1}, on the $q$-intermediate property of the involved spaces $\mathcal X$.  This step is carried out  in Lemma \ref{l:energy}. As every known example of $\UMD$ space is $q$-intermediate for some $q$,  this assumption may seem  harmless. However,  unlike the linear setting of \cite{HLC},  it is the combined $q$-intermediate type of the three spaces that  introduces the restriction \eqref{e:inttype} and influences the range $\mathrm{int}(\mathcal H)$ in Theorem \ref{t:main}. Further investigation on the necessity and on possible weakening of the  $q$-intermediate assumptions are left for future work.

\subsection*{Plan of the paper} Section 2 contains the preliminary material needed to define the model tri-tile forms. Section \ref{s:3} presents the outline of the proof of Theorem \ref{t:main}: in particular, the definitions of trees, vector-valued energies as well as the statement of the energy   and tree lemmata, respectively Lemma \ref{l:energy} and \ref{l:treeest}. Section \ref{s:treeproof} contains the proof of the tree Lemma \ref{l:treeest} via the reduction to the phase-space projection Proposition \ref{p:psp}. The proof of the latter proposition is developed in Section \ref{s:pfpsp}. Section \ref{s:energyproof} contains the proof of the energy Lemma \ref{l:energy}, while  Lemma \ref{l:engbd} is proved in the concluding Section \ref{s:engbdpf}.
\subsection*{Remark} In the final stages of preparation of the present manuscript, the authors learned of the work \cite{AU2} by Amenta and Uraltsev. These authors obtain a simultaneous and independent version of Theorem \ref{t:main}, focused on the bilinear Hilbert transform in the Banach range of exponents, under the same intermediate space condition \eqref{e:inttype}. Interestingly, the methods employed in \cite{AU2} are rather different from ours: the use of phase-space projections and of the $\UMD$ Calder\'on-Zygmund estimates from \cite{DLMV1} is replaced by outer embeddings for the vector-valued wave packet transform involving telescoping (defect) energies.

The authors want to thank Alex Amenta and Gennady Uraltsev for sharing their preprint and for interesting discussions on the subject.

\section{Space-frequency model sums} \label{s:model}
\subsection{Notation}
 While our estimates are valid in any ambient space $\R^d$,   we work with $d=1$ to avoid unnecessary notational proliferation.
 However, we adopt $d$-dimensional terminology and notation whenever possible. For instance, we write $B_r(x)=\{y\in \R: |y-x|<r\}$ and simply $B_r$ in place of $B_r(0)$. Whenever possible, spatial and frequency $1$-dimensional cubes are indicated respectively by $I,\omega$. The center and sidelength of a $1$-dimensional cube $I$ are respectively denoted by $c(I),\ell(I)$. We use the Japanese bracket notation $\langle x \rangle = \sqrt{1+ |x|^2}.$

 If $m$ is a bounded function on $\R$, we denote both the corresponding $L^2(\R)$-bounded Fourier multiplier operator and its trivial extension to a Banach space $\mathcal X$ as
 \[
 T_m f(x) = \int \widehat f(\xi) m(\xi) \e^{ix\xi }\, \mathrm{d} \xi, \qquad x\in \R.
 \]
 When $\mathcal X$ is a Banach space, we keep denoting by $T$ the trivial extension $T \otimes \mathrm{Id}_\mathcal{X}$ of a linear operator $T$.

\subsection{Frequency-localized indicators} \label{ss:fli} Indicator functions, e.g. of intervals, possess perfect localization in space but poor frequency decay. We come to defining frequency-localized approximations of indicator functions by weakening such spatial localization to polynomial decay.   Let $J,K$ be  large fixed integers, $c_0$ be a small positive constant and $\eta:\R\to [0,\infty)$ have the following properties:
\[
\supp \widehat \eta \subset B_{2^{-2J}}, \qquad
c\langle x\rangle^{-2K}\leq \eta(x) \leq
\langle x \rangle^{-2K}  \quad
\forall x\in \R.
\]
We rescale $\eta$ at frequency scale $2^{jJ}$,
$
\eta_j\coloneqq 2^{jJ} \eta(2^{jJ}\cdot)
$,
and for $ E\subset \R, j \in \mathbb Z$ we introduce
\[
\chi_{E,j} = \mathbf{1}_E * \eta_j
\]
whose frequency support is contained in $B_{2^{(j-2)J}}$. This definition will be of particular interest to us when $I$ is a $J$-dyadic interval, that is, $\ell(I)=2^{-jJ}$ for some $j \in \mathbb Z$. In that case we simply write $\chi_I$ in place of $\chi_{I,j}$. Notice that, for $\chi=\chi_I$, $N=2K,\delta=2^{-2J}, c=c_0$  there holds
\begin{align}\label{e:frequencylocalized3}
& \supp \widehat \chi \subset B_{\delta \ell(I)^{-1}};
\\
\label{e:frequencylocalized1} & \chi \textrm{ real-valued},  \quad    c \left\langle \frac{x-c(I)}{\ell(I)}\right\rangle^{-N}  \leq  \chi(x) \leq C \left\langle \frac{x-c(I)}{\ell(I)}  \right\rangle^{-N},  \qquad x\in \R.
\end{align}

More generally, we denote by  $X_{I}(N,\delta,c,C)$  the class of functions satisfying conditions \eqref{e:frequencylocalized3}-\eqref{e:frequencylocalized1}.
If $\psi $ instead satisfies \eqref{e:frequencylocalized3} and
\begin{equation}
\label{e:frequencylocalized2}
| \psi(x)| \leq C \left\langle \frac{x-c(I)}{\ell(I)}  \right\rangle^{-N},  \qquad x\in \R
\end{equation}
in place of the more stringent \eqref{e:frequencylocalized1},
we say that $\psi \in \Psi_I(N,\delta, C)$. Obviously, the inclusion $X_{I}(N,\delta,c,C)\subset \Psi_I(N,\delta, C)$ holds.

 It is important to notice that if $I,I'$ are  $A$-comparable intervals, that is $I \subset AI', I'\subset A I$ and $\chi\in X_I(N,\delta,c,C)$, then $\chi \in X_{I'}(N',\delta',c',C')$ as well, for suitable values of $N',\delta',c',C'$ depending only on the comparability constant $A$ and on $N,\delta, c,C$. A similar statement applies to the classes $\Psi_I(N,\delta, C)$.

Let now $\omega $ be a $J$-dyadic \emph{frequency} interval and $\beta\geq 1$: the latter parameter will take moderate values, $1\leq \beta \ll 2^{J}$.   The class $M_{\omega}(\beta) $ will consist of those smooth functions $m $ with $\supp m \subset \beta \omega$ and adapted to $\beta \omega$ in the sense that
\[
\sup_{|\alpha| \leq C}
\sup_{\xi \in \R} \beta^{-\alpha}\ell(\omega)^\alpha  \left|\partial_\xi^\alpha m(\xi)\right| \leq 1.
\]

As customary, we will work with \emph{tiles} $t=I_t \times \omega_t\in \mathbb R \times \R$, namely the cartesian product of dyadic intervals in $\R$ of reciprocal length, to specify time-frequency localizations. In the remainder of the article, we restrict ourselves to considering $J$-dyadic intervals. Mimicking rank 1 projections in a Hilbert space, we may define classes of  multiplier operators adapted to each tile $t$ as follows. Whenever $\psi \in \Psi_{I_t}(N,\delta,C), m \in M_{\omega_t}(\beta) $, the operator
\[
S_t f(x) = \psi(x) T_{m} f(x) = \int_{\R} \psi(x) m(\xi) \widehat f(\xi) \e^{i x \cdot \xi}\, \mathrm{d} \xi
\]
is said to belong to the class $ {\mathbb S}_t(N,\delta, \beta,c,C)$ of $t$-localized operators.
From the rapid decay of the kernel of $T_m$ and the adaptedness of $\psi$ it follows that
\begin{equation}
\label{e:singlescaleM}
\left|S_t f(x)\right|_{\mathcal X} \lesssim \left\langle \frac{\mathrm{dist}(x,I_t)}{\ell(I_t)} \right\rangle^{-100}M(|f|_{\mathcal X}) (x), \qquad x\in \R.
\end{equation}
Here  $\mathcal X$  may be any Banach space, not necessarily $\UMD$.
\begin{rem}
In the remainder of the paper, the values $N,\delta,\beta,c,C$ will vary within a fixed range \[c\geq \bar{c}, \quad C\leq \bar C, \quad N\geq \bar{N}, \quad2^{-2J}\leq \delta \leq 2^{-(1.5)J} , \quad 1\leq \beta\leq \bar{\beta}.\] For simplicity we keep these parameters implicit and  omit them from the notations for $X_{I}(N,\delta,c,C)$, $\Psi_I(N,\delta, C)$, $M_{\omega}(\beta)$ and  $ {\mathbb S}_t(N,\delta, \beta,c,C)$. Therefore, the reader is warned that the precise values of these parameters may vary from line to line without explicit mention.

An advantageous example of usage for this convention is that whenever $\chi\in X_{I}$, the functions $\chi^m \in  X_{I}$ as well for small values of $m \in \mathbb N$.
\end{rem}
In our arguments we will make use of a form of Bernstein's inequality involving approximate indicators, in particular functions of the classes ${X_I}$ described above. This  is a known phenomenon in the literature, see e.g.\ \cite[Lemma 5.4]{MTT2}; we give the proof as we are in the vector-valued context.
\begin{lem} \label{l:bernstein}
Let $R>0$,  $\mathcal X$ be a Banach space and $f$ be an $\mathcal X$-valued function on $\R$ with \[\mathrm{supp}\, \widehat f \subset B_{R}.\]
Let  $w:\R \to (0,\infty)$  be essentially constant at scale $R^{-1}$, namely
\[
A^{-1} \left\langle R {|x-y|}  \right\rangle^{-100} \leq
\frac{w(x)}{w(y)} \leq A \left\langle R {|x-y|}  \right\rangle^{100} , \qquad x,y\in \R
\]
for some positive constant $A$. Then for all $0<\alpha\leq 1 $
\[
\|wf\|_{L^\infty(\R; \mathcal X)} \lesssim_{A,\alpha} R^\alpha\|wf\|_{L^{\frac{1}{\alpha}}(\R; \mathcal X)}.
\]
\end{lem}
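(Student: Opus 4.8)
The plan is to reduce the vector-valued Bernstein inequality to two ingredients: a reproducing (convolution) identity coming from the Fourier support condition, and a Young-type convolution estimate weighted by $w$. Since $\mathrm{supp}\,\widehat f \subset B_R$, we may pick a Schwartz function $\Phi$ with $\widehat\Phi \equiv 1$ on $B_1$, set $\Phi_R = R\,\Phi(R\,\cdot)$, so that $\widehat{\Phi_R}\equiv 1$ on $B_R$ and hence $f = \Phi_R * f$ pointwise (in the Bochner sense). Crucially, $\Phi$ being Schwartz, $|\Phi_R(x)| \lesssim_M R \langle R|x|\rangle^{-M}$ for every $M$.

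First I would write, for fixed $x$,
\[
w(x) |f(x)|_{\mathcal X} \le \int_{\R} w(x) |\Phi_R(x-y)|\, |f(y)|_{\mathcal X}\ud y
= \int_{\R} \frac{w(x)}{w(y)}\, |\Phi_R(x-y)|\, \big(w(y)|f(y)|_{\mathcal X}\big)\ud y.
\]
Then I would invoke the essential-constancy hypothesis to bound $w(x)/w(y) \le A \langle R|x-y|\rangle^{100}$, which gets absorbed into the rapid decay of $\Phi_R$ at the cost of lowering its exponent: the kernel $K_R(x,y) := \frac{w(x)}{w(y)}|\Phi_R(x-y)|$ satisfies $K_R(x,y) \lesssim_{A,M} R\,\langle R|x-y|\rangle^{-M}$ for any $M$. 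So $w(x)|f(x)|_{\mathcal X} \le (K_R * (w|f|_{\mathcal X}))(x)$ up to constants, with $K_R$ an $L^1$-normalized bump at scale $R^{-1}$ with arbitrarily fast polynomial decay.

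The final step is Young's inequality in the form $\|K_R * g\|_{L^\infty} \le \|K_R\|_{L^{(1/\alpha)'}} \|g\|_{L^{1/\alpha}}$ applied to the scalar function $g = w|f|_{\mathcal X} \in L^{1/\alpha}(\R)$; here $(1/\alpha)'$ is the conjugate exponent, and for $0 < \alpha \le 1$ we have $1 \le (1/\alpha)' \le \infty$, so this is legitimate (when $\alpha = 1$ one instead uses $\|K_R\|_{L^\infty}$ directly, still $\lesssim R$). A direct computation of the scaling gives $\|K_R\|_{L^{(1/\alpha)'}} \lesssim_{A,\alpha} R \cdot R^{-1/(1/\alpha)'} = R^{\alpha}$, using that $K_R$ is $R$ times a fixed-profile rescaled bump. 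Combining, $\|wf\|_{L^\infty(\R;\mathcal X)} \lesssim_{A,\alpha} R^{\alpha}\|wf\|_{L^{1/\alpha}(\R;\mathcal X)}$, as desired.

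There is no serious obstacle here — the statement is a weighted, vector-valued packaging of classical Bernstein. The only point requiring mild care is the bookkeeping that the polynomial growth of $w(x)/w(y)$ is genuinely dominated by the Schwartz decay of $\Phi_R$ after rescaling (so that one may still extract \emph{any} decay exponent $M$, in particular large enough for $K_R \in L^r$ for all $r \ge 1$); since $\Phi$ is Schwartz, this is automatic by choosing $M$ large compared to $100$. The Bochner-integrability manipulations and the passage of the norm $|\cdot|_{\mathcal X}$ inside the convolution integral are justified by the triangle inequality for the Bochner integral, exactly as in the scalar case, and the $\UMD$ property plays no role — consistent with the remark in the excerpt that $\mathcal X$ may be any Banach space.
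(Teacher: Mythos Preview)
Your argument is correct and essentially identical to the paper's: both write $f=\Phi_R*f$ via a frequency bump, push the norm inside, use the essential-constancy hypothesis to transfer $w(x)$ to $w(y)$ at the price of a polynomial factor absorbed by the Schwartz decay, and then close with H\"older/Young to extract the factor $R^\alpha$. The only cosmetic difference is that the paper writes the final step as a direct H\"older estimate on the integral rather than packaging it as Young's convolution inequality; note also that your $K_R(x,y)$ is not literally a convolution kernel, so the notation ``$K_R*(w|f|_{\mathcal X})$'' should be read as bounding by the genuine convolution against the translation-invariant majorant $R\langle R|\cdot|\rangle^{-M}$, which is exactly what the paper does.
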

\begin{proof}Let $\phi$ be a smooth nonnegative function with $\phi=1$ on $B_{R}$ and $\phi=0$ on $B_{2R}$. Notice that
$
|\widehat{\phi}(x)| \lesssim  R \langle R|x|\rangle^{-200},
$ for all $x\in \R$.
  Then $f=f*\widehat{\phi}$, and
\[
\begin{split}
|w(x) f(x)|_{\mathcal X} &\lesssim_{\alpha,A} R \int \frac{ |w(y) f(y)|_{\mathcal X}}{\left\langle R {|x-y|}  \right\rangle^{100}} \mathrm{d} y \\ &\leq R^\alpha \|wf\|_{L^{\frac{1}{\alpha}}(\R; \mathcal X)} \left(\int\frac{  \, R \mathrm{d} y}{\left\langle R {|x-y|}  \right\rangle^{100}}   \right)^{1-\alpha} \lesssim R^\alpha \|wf\|_{L^{\frac{1}{\alpha}}(\R; \mathcal X)}
\end{split} \] as claimed. The proof is complete.
\end{proof}
We will apply the lemma above to $w=\chi \in {X_I}$ for values  $R\sim \ell(I)$.
\subsection{Rank 1  forms}
Trilinear multiplier forms of the type \eqref{e:Lambdam} admit a discretization in term of \emph{tri-tiles}. We say that the ordered triple  of tiles
 $P=(P_1,P_2,P_3)$ is a \emph{tri-tile} if
 \[
 I_{P_1}=I_{P_2}=I_{P_3}=:I_{P}.
 \]
Tri-tiles specify the space-frequency essential support of single scale multiplier forms, spatially concentrated on $I_P$ and frequency supported on the \emph{frequency cube}
\begin{equation}
\label{e:q}Q_P=\omega_{P_1}\times \omega_{P_2}\times \omega_{P_3} .
\end{equation} As condition \eqref{decay} is invariant under the one-parameter family of translations along the vector ${\beta}^\perp$, the decomposition of the forms \eqref{e:Lambdam} is realized with a one-parameter, or \emph{rank 1}, family of tri-tiles, which is defined as follows.

Let $J>10$ be a (large) constant depending on $\Delta_{\mathbf \beta}$.
%; in applications,  we may use $J\sim (\Delta_{\mathbf \beta})^{-1}$, where  $\Delta$ is from \eqref{e:nondeg}.
The collection of tri-tiles $\mathbb{P}$ is of rank 1 if the following hold.
\begin{itemize}
\item[a.] The collections $ \{I_P:P \in \Pp\}$ and  $\{ \omega_{P_k}: P \in \Pp \}$, $k=1,2,3$ are  $O(2^J)$  scale-separated $J$-dyadic grids;
\item[b.] If $P\neq P' \in \Pp$ are such that  $I_P=I_{P'}$ then  $\omega_{P_j}\cap\omega_{P'_j}=\varnothing$ for each  $j \in\{1,2,3\}$;

\item[c.] Denote by  $ {\omega_{P}}$  the convex hull of the intervals $3\omega_{P_k},$ $k=1,2,3.$ If $P,P'\in \Pp$ are such that $\omega_{P_k} \subset  \omega_{P'_k}$ for some $k\in \{1,2,3\}$ then $2^{\frac{J}{2}} { \omega_{P}} \subset 2^{\frac{J}{2}}   {\omega_{P'}}$;

\item[d.] if $P,P'\in \Pp$ are such that $\omega_{P_j} \subset  \omega_{P'_j}$ for some $j\in \{1,2,3\}$ then $3\omega_{P_k} \cap  3 \omega_{P'_k}=\varnothing$ for $k\in \{1,2,3\}\setminus\{j\}$.
\end{itemize}
The singular multiplier forms $\Lambda_m$ then  lie in the convex hull of the  \emph{tri-tile forms}
\begin{equation}
\label{e:tri-tile}
\Lambda_{\mathbb P}(f_1,f_2,f_3) = \sum_{P \in \mathbb P }\int_{\R} \prod_{k=1}^3 \chi_{I_P}(x) T_{m_{P_k}} f_k (x) \, \mathrm d x
\end{equation}
where $\mathbb P$ is  a finite subset of a rank 1 collection of tri-tiles, $\chi_{I_P}\in X_{I_P}$ has been defined in Subsection \ref{ss:fli} and $m_{P_k} \in M_{\omega_{P_k}}(1), P \in \mathbb P$ satisfies the consistency condition\begin{equation}
\label{e:conscond}
Q_{P}= Q_{P'} \implies m_{P_k}= m_{P'_k}, \; k=1,2,3,
\end{equation}
 referring to \eqref{e:q}.
Therefore,    to bound $\Lambda_m$ it suffices to bound  $\Lambda_{\mathbb P}$ uniformly.
A detailed proof of these statements, in a more general context, is given in \cite[Section 5]{MTT}, see also \cite{MTT2,ThWPA}. The remainder of the article will be devoted to the proof of such uniform bounds for $\Lambda_{\mathbb P}$. Note that the operators
\[
f\mapsto \chi_{I_P} (T_{m_{P_k}} f)
\]
belong to the class $\mathbb S_{P_k}$, for $k=1,2,3$.
\section{Proof of Theorem \ref{t:main}: tree and energy estimates}
\label{s:3}
In this section, after devising the necessary definitions in our context, we present the statements of the three main lemmas, which may then be combined to prove Theorem \ref{t:main} in a standard fashion.

 We first introduce \emph{trees}, roughly speaking, collections of tri-tiles sitting at a common frequency and spatially localized to an interval. Then we define   \emph{tree operators}, that is modulated Calder\'on-Zygmund localized operators associated to each tree. These are  used to define the \emph{energy} of a certain $\mathcal X$-valued function with respect to a set of tri-tiles $\mathbb P$: this is a sort of localized maximal $L^q(\R; \mathcal X)$-norm of tree operators coming from $\mathbb P$.

 Finally, we state the main steps in the proof of Theorem \ref{t:main}. The first is the \emph{energy lemma}, which allows us to decompose any given collection of tri-tiles into   are unions of trees of controlled energy for each function $f_k$ and bounded spatial support. The second is the \emph{tree lemma}, which provides a bound of the tri-tile form \eqref{e:tri-tile} when $\mathbb P$ is a tree. The proof of this lemma is one of the main novelties of this article, as it relies on a combination of the multilinear $\UMD$ CZ theory of \cite{DLMV1} with newly developed phase-space projections adapted to the vector-valued setting.

 We remark here that a standard combination of Lemmata \ref{l:treeest},
\ref{l:energy} and \ref{l:engbd} yields a range of restricted weak type estimates for the forms \eqref{e:tri-tile}: the elementary procedure is identical to that leading to \cite[Corollary 9.2]{HLP}.  These estimates then entail Theorem \ref{t:main} by standard multilinear restricted weak type interpolation, see e.g.\ \cite{ThWPA}. This deduction is the same as that of   \cite[Theorem 9.3]{HLP} from \cite[Corollary 9.2]{HLP}. We omit the details.

\subsection{Trees} \label{ss:trees}
Rank 0 subcollections of tri-tiles $\mathbb P$, whose associated forms $\Lambda_{\mathbb P}$ are discretized multilinear CZ type
multipliers, are called \emph{trees}. We work with a specific notion of tree which satisfies certain additional properties along the lines of  \cite[Section 4]{MTT2}.
We say that $\T\subset \Pp$ is a tree having $(I_\T,\xi_\T)$ as top data if the following conditions hold.

\begin{itemize}\item[a.] $I_\T$ is a $J$-dyadic interval, $\xi_\T\in \R$  and \[
I_P \subset I_\T, \quad \xi_\T \in \omega_P \qquad \forall P \in \T.\]
\item[b.] The \emph{frequency localization sets} $\mathbf{Q}_\T=\{Q_P: P \in \T\}$ is such that
\[
Q,Q'\in \mathbf{Q}_\T, \, \ell(Q)=\ell(Q') \implies Q=Q';
\]
namely, there is only one frequency localization for each  $J$-dyadic scale.
\item[c.] The \emph{spatial localization sets}
\[
E_{Q,\T}= \bigcup \{I_P: P \in \T: Q_P = Q\}, \qquad Q \in \mathbf Q_\T
\]
are nested, that is
\[
Q,Q'\in \mathbf Q_\T, \,\ell(Q)\leq \ell(Q') \implies E_{Q,\T} \supset E_{Q',\T}.
\]
\end{itemize}{
It is convenient to denote by $\mathbf{j}_\T=\{j\in \mathbb Z: \ell(Q) = 2^{j J} $ for some $Q \in \mathbf{Q}_\T\}$, the frequency scales appearing in $\T$. Then
\begin{equation}
\label{e:Tj} \T= \bigcup_{j \in \mathbf{j}_\T} \T(j), \qquad \T(j)=\{P\in \T : \ell(Q_P)=2^{jJ}\}.
\end{equation}
  We also take the opportunity here to observe that trees constructed via greedy selection processes, such as the one in the proof of Lemma \ref{l:energy} below, satisfy properties b.\ and c.\ automatically. This is proved in e.g.\ \cite[Lemmata 4.4 and 4.7]{MTT2}. Furthermore, the smoothing properties of \cite[Lemmata  4.10, 4.11, 4.12]{MTT2} also hold. We  will only make use of these properties within the proof of the phase-space projection estimates: we will then recall what is needed, and send to \cite{MTT2} for detailed statements.

\subsection{Tree operators}
We say that the tree $\T$ is $k$-lacunary, $k=1,2,3$ if
\begin{equation}
\label{e:klac}
 \{3\omega_{P_k}:P \in \T_A\} \textrm{ are a pairwise disjoint collection for } k \in A .
\end{equation}
A consequence of property d.\ of  rank 1 collections is that  each tree $\T$ can be written as the disjoint union
\begin{equation}\label{Treesplit}
\T=\bigcup_{\substack {A\subset \{1,2,3\} \\ \#A \geq 2}} \T_{A}
\end{equation}
where each $\T_A$ is a tree with the same top data as $\T$ and has the additional property \eqref{e:klac} for $k \in A$.

We now introduce  tree operators associated to $k$-lacunary trees. For our purposes here, we need a more nuanced notion than the usual, e.g.\ appearing in \cite{CDPO,HLC,LT1,LT2}, fully discretized tree operator
\begin{equation}
\label{e:treeopold}
f\mapsto \sum_{P \in \mathsf{T}  } \langle f, \varphi_{P_k}\rangle \varphi_{P_k}, \qquad k \in A
\end{equation}
where $\varphi_{P_k}$ is a wave packet adapted to the tile $P_k$. Let $\T$ be a $k$-lacunary tree.  A (scalar) tree operator of $k$-th type is the linear operator
\begin{equation}
\label{e:treeop}
T_\T  f=
\sum_{P \in \mathsf{T} } S_{P_k} f
\end{equation}
where each  $S_{P_k}\in \mathbb S_{P_k}$, $P\in \mathsf T$. When $\xi_\T=0$, the  defined tree operator   is a pseudo-differential operator with symbol
\[
a  (x,\xi) =
\sum_{P \in \mathsf{T}}  \psi_{P_k}(x) m_{P_k} (\xi)
\]
where each $\psi_{P_k}\in \Psi_{I_P}$ and $m_{P_k}\in M_{\omega_{P_k}}$. A routine computation verifies that this symbol is uniformly of class $S^0_{1,1}$, see e.g.\ \cite[p.\ 288]{MTT2}. Further, as the intervals $\{ \omega_{P_k}: P\in  \mathsf{T}\} $ are pairwise disjoint, $T_\T$ is uniformly $L^2(\R)$ bounded. Relying on these two observations, we gather that $T_\T$ is an $L^2(\R)$-bounded Calder\'on-Zygmund operator, see for instance the discussion at \cite[p.\ 271]{SteinB}.  Therefore, $T_\T$  satisfies uniform $L^q(\R; \mathcal{X})$ bounds, $1<q<\infty$, as well   $L^\infty(\R; \mathcal{X})\to \mathrm{BMO}(\R; \mathcal{X})$ estimates, whenever $\mathcal X$ is a $\UMD$ Banach space. In fact, by modulation invariance, we may remove the $\xi_\T=0$ assumption and
conclude that tree operators  $T_\T$  are uniformy $L^q(\R; \mathcal{X})$ bounded, when $1<q<\infty$.
\subsection{Energy and energy lemma}

This definition is instead a re-elaboration of  \cite[Section 8]{HLC}. If $q\geq 2$, $\mathsf T$ is a $k$-lacunary tree and $f$ is a $\mathcal X$-valued function, we define
\begin{equation}
\label{e:size} \|f\|_{\mathsf{T},k,q} = \sup \frac{1}{|I_{\mathsf{T}
}|^{\frac1q}} \left\|  T_{\mathsf T} f \right\|_{L^q(\R; \mathcal{X})}
\end{equation}
where the sup is taken over all possible choices of type $k$  tree operators  $T_{\mathsf T}$.
 We also find convenient to define a maximal version: for each set $\mathbb P$ of tri-tiles,
\begin{equation}
\label{e:engdef}
\mathsf{eng}_k(f)(\mathbb{P};q) = \sup_{\substack{\mathsf T \subset \mathbb P \\ \T \, k-\textrm{lacunary }}} \|f\|_{\mathsf{T},k,q}.
\end{equation}
The next lemma is a variation of e.g.\ \cite[Corollary 9.6]{HLC}.
\begin{lem} \label{l:engbd} Let $\mathbb P$ be a finite collection of tri-tiles. Then
\[
\mathsf{eng}_k(f)(\mathbb{P}; q) \lesssim \sup_{P \in \mathbb P} \inf_{I_P} \mathrm{M}(|f|_\mathcal{X}) .
\]
\end{lem}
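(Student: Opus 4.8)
The plan is to reduce everything to the single-scale pointwise estimate \eqref{e:singlescaleM} and a careful summation over the scales of a $k$-lacunary tree. Fix a $k$-lacunary tree $\T\subset\mathbb P$ and a choice of type $k$ tree operator $T_\T f=\sum_{P\in\T}S_{P_k}f$ with $S_{P_k}\in\mathbb S_{P_k}$. Since by property b.\ of a tree there is exactly one frequency cube per $J$-dyadic scale, we may organize the sum by the scales $j\in\mathbf j_\T$ using \eqref{e:Tj}, writing $T_\T f=\sum_{j\in\mathbf j_\T}\big(\sum_{P\in\T(j)}S_{P_k}f\big)$. For a fixed $j$, the tiles $P\in\T(j)$ all share the same $\omega_{P_k}$ (by b.) and have pairwise disjoint spatial intervals $I_P$ of common length $2^{-jJ}$ (by b.\ again, since $Q_P$'s differ only if the $\omega$'s do, and here they don't — so the $I_P$ must be distinct, and by the grid structure in a.\ disjoint). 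Hence, using \eqref{e:singlescaleM} with exponent $100$ and the fact that the translates $\langle \dist(\cdot,I_P)/\ell(I_P)\rangle^{-100}$ of disjoint equal-length intervals sum to a bounded function, one gets the scale-$j$ bound $\big|\sum_{P\in\T(j)}S_{P_k}f(x)\big|_{\mathcal X}\lesssim \big(\sum_{P\in\T(j)}\langle \dist(x,I_P)/2^{-jJ}\rangle^{-100}\big)\,\mathrm M(|f|_{\mathcal X})(x)\lesssim \mathrm M(|f|_{\mathcal X})(x)$ for $x$; but to sum in $j$ we need decay in $j$ away from $I_\T$, which we extract by keeping the weight: the scale-$j$ piece is controlled by $\langle \dist(x,I_\T)/2^{-jJ}\rangle^{-100}\mathrm M(|f|_{\mathcal X})(x)$ up to a constant, using $I_P\subset I_\T$.

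Next I would pass to the $L^q$ norm. Raising the scale-$j$ estimate to the power $q$ and integrating, one obtains $\|\sum_{P\in\T(j)}S_{P_k}f\|_{L^q(\R;\mathcal X)}\lesssim \|\mathbf 1_{CI_\T}\mathrm M(|f|_{\mathcal X})\|_{L^q}$ plus a tail contribution from $x\notin CI_\T$ that carries a factor decaying geometrically in $2^{jJ}\dist(x,I_\T)$, which is summable in $j\in\mathbf j_\T$ (the scales $2^{jJ}$ appearing in a tree are distinct powers, so $\sum_j (2^{jJ}r)^{-\gamma}\lesssim 1$ whenever $r\gtrsim \ell(I_\T)^{-1}$, say). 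Summing the geometric series in $j$, the triangle inequality gives
\[
\|T_\T f\|_{L^q(\R;\mathcal X)}\lesssim \|\mathbf 1_{CI_\T}\,\mathrm M(|f|_{\mathcal X})\|_{L^q(\R)} + (\text{tail}).
\]
The main term is bounded by $|I_\T|^{1/q}\,\sup_{CI_\T}\mathrm M(|f|_{\mathcal X})$, and since $\mathrm M(|f|_{\mathcal X})$ is, up to a constant, essentially constant at the scale of any $I_P\subset I_\T$ (or more simply, $\sup_{CI_\T}\mathrm M(|f|_{\mathcal X})\lesssim \inf_{I_\T}\mathrm M(\mathrm M(|f|_{\mathcal X}))\lesssim \inf_{I_\T}\mathrm M(|f|_{\mathcal X})$ after enlarging to a maximal function of a maximal function and using $\mathrm M\circ\mathrm M\lesssim\mathrm M$ is false pointwise but the needed comparison $\sup_{CI}\mathrm M g\lesssim \inf_I \mathrm M g$ does hold for a fixed cube $I$ when $g\ge0$), we arrive at $|I_\T|^{-1/q}\|T_\T f\|_{L^q}\lesssim \inf_{I_\T}\mathrm M(|f|_{\mathcal X})\le \sup_{P\in\mathbb P}\inf_{I_P}\mathrm M(|f|_{\mathcal X})$, where the last step uses that $\T\subset\mathbb P$ contains a tile $P$ with $I_P$ comparable to $I_\T$ — or, more safely, one bounds directly by $\inf_{I_P}$ for any $P\in\T$ since $I_P\subset I_\T$ forces $\inf_{I_\T}\le\inf_{I_P}$ only in the wrong direction, so here one instead notes $\sup_{CI_\T}\mathrm M(|f|_{\mathcal X})\lesssim\inf_{I_P}\mathrm M(|f|_{\mathcal X})$ for any $P\in\T$ by essential constancy of $\mathrm M$ at scales $\gtrsim\ell(I_P)$ relative to distance $\lesssim\ell(I_\T)$, absorbing the ratio $\ell(I_\T)/\ell(I_P)$ into the implicit constant is \emph{not} allowed, so one really keeps $I_\T$ and uses the presence of a top tile.) Taking the supremum over tree operators and over $k$-lacunary $\T\subset\mathbb P$ yields the claim.

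The step I expect to be the genuine obstacle is the summation over the scales $j\in\mathbf j_\T$: one must exploit that a tree, by construction, has at most one frequency cube — hence at most one spatial "layer" — per $J$-dyadic scale and that the spatial localization sets are nested (property c.), so that the kernel tails of operators at different scales, all anchored inside $I_\T$, genuinely telescope rather than merely overlapping $O(\#\mathbf j_\T)$ times; without this geometric input the naive triangle inequality loses a factor of $\#\mathbf j_\T$, which is unbounded. The remaining ingredients — \eqref{e:singlescaleM}, disjointness of equal-scale spatial intervals, and the comparison $\sup_{CI}\mathrm M g\lesssim\inf_I\mathrm M g$ — are routine and, crucially, involve no $\UMD$ structure, consistently with the remark following \eqref{e:singlescaleM}.
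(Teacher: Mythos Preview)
Your approach has a genuine gap exactly where you locate it: the summation over scales $j\in\mathbf j_\T$. The scheme ``bound each scale pointwise by $M(|f|_{\mathcal X})$, then sum'' cannot succeed, because after demodulation the tree operator $T_\T$ is a Calder\'on--Zygmund operator (a sum of Littlewood--Paley type pieces), and such operators are simply \emph{not} pointwise dominated by the maximal function. No amount of telescoping coming from the nestedness property c.\ rescues this: the spatial localization sets $E_{Q,\T}$ being nested does not make the operators $\sum_{P\in\T(j)}S_{P_k}$ a telescoping family --- they live at disjoint frequency annuli, and for $x\in I_\T$ each scale genuinely contributes order $M(|f|_{\mathcal X})(x)$. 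Your tail argument is fine for $x$ far from $I_\T$, but the main region $x\in CI_\T$ carries the full $\#\mathbf j_\T$ loss under the triangle inequality, in either the pointwise or the $L^q$ formulation.

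There is a second, independent problem: even if you could arrive at $\|T_\T f\|_{L^q}\lesssim \|\mathbf 1_{CI_\T} M(|f|_{\mathcal X})\|_{L^q}$, the passage to $|I_\T|^{1/q}\sup_{P\in\mathbb P}\inf_{I_P}M(|f|_{\mathcal X})$ does not hold. You correctly diagnose that $\sup_{CI_\T}Mg$ is not controlled by $\inf_{I_P}Mg$ when $\ell(I_P)\ll\ell(I_\T)$, and the tree need not contain a tile with $I_P=I_\T$. The right quantity to track throughout is $\lambda\coloneqq\sup_{P\in\T}\inf_{I_P}M(|f|_{\mathcal X})$, and every estimate must land on some $\inf_{I}Mg$ with $I=I_P$ for a genuine tile in $\T$.

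The paper's route avoids both issues by proving a local weak-$L^1$ oscillation bound: for every $J$-dyadic $K$ there is a constant $a_K$ (coming from the scales $\ell(I_P)>\ell(K)$) with $\|\mathbf 1_K(T_\T f-a_K)\|_{L^{1,\infty}}\lesssim\lambda|K|$. The small-scale near contribution $\{I_P\subset 3K\}$ is handled not pointwise but by invoking the \emph{uniform} $L^1\to L^{1,\infty}$ bound for tree (hence CZ) operators, applied to $\gamma_L^{-1}f$ after a localization trick; this is precisely what replaces your failed scale summation. The weak-type oscillation estimate then upgrades to $L^q$ via John--Str\"omberg and John--Nirenberg. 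Throughout, each intermediate bound is tied to $\inf_{I}Mg$ for some $I\in\{I_P:P\in\T\}$, which is why $\lambda$ (and not $\sup_{CI_\T}Mg$) appears on the right.
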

Although the arguments of \cite{HLC} may be adapted to the context of Lemma \ref{l:engbd}, we provide a more direct proof in Section \ref{s:engbdpf}.

In the last main lemma, the quantitative assumption of $\mathcal X_k$ being an interpolation space is used. We could alternatively bring forth definitions akin to the tile-type of a Banach space in \cite{HLC,HLP,HLPC}, which is a formal consequence of our intermediate $\UMD$ assumption, but for simplicity and lack of examples we give up on this additional formal generality.
\begin{lem} \label{l:energy} Suppose $\mathcal X$ is   $q_\mathcal{X}$-intermediate and let $f \in L^\infty(\R; \mathcal X)$ be subordinated to the finite measure set $F$, namely $|f|_\mathcal{X} \leq \mathbf{1}_F$. Fix $q> q_\mathcal{X}$ and let $\mathbb P$ be a finite set of tri-tiles. Then
$
\mathbb P = \mathbb P^+ \cup \mathbb P^-
$
with the property that
\[
\mathsf{eng}_k(f)(\mathbb{P}^-; q_\mathcal{X}) \leq 2^{-1} \mathsf{eng}_k(f)(\mathbb{P};q_\mathcal{X})
\]
and that $\mathbb P^+$ is a union of  trees $\mathsf{T} \in \mathcal T$ with the property that
\[
\sum_{\mathsf{T} \in \mathcal T} |I_{\mathsf{T}}|\lesssim_q \left[\mathsf{eng}_k(f)(\mathbb{P}; q_\mathcal{X})\right]^{-q} |F|.
\]
\end{lem}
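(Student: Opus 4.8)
The plan is to mimic the classical energy-selection argument (as in \cite[Section 8--9]{HLC} or \cite{HLP}) but with the tree operators $T_{\mathsf T}$ playing the role that $\ell^2$-sums of wavelet coefficients play in the scalar case, exploiting the $q_\mathcal X$-intermediate structure of $\mathcal X$ only where almost-orthogonality of different trees is needed. Write $\mathsf e=\mathsf{eng}_k(f)(\mathbb P; q_\mathcal X)$. I run a greedy selection: among all $k$-lacunary subtrees $\mathsf T\subset\mathbb P$, pick one, say $\mathsf T_1$ with top data $(I_{\mathsf T_1},\xi_{\mathsf T_1})$, for which $\|f\|_{\mathsf T_1,k,q_\mathcal X}\geq \tfrac12\mathsf e$ (if no such tree exists we are already done with $\mathbb P^+=\varnothing$). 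Remove $\mathsf T_1$ from $\mathbb P$, repeat with the remaining tri-tiles, obtaining a sequence of trees $\mathsf T_1,\mathsf T_2,\dots$; this terminates since $\mathbb P$ is finite. Set $\mathbb P^+=\bigcup_i\mathsf T_i$, $\mathbb P^-=\mathbb P\setminus\mathbb P^+$, and $\mathcal T=\{\mathsf T_i\}$. By construction every $k$-lacunary subtree of $\mathbb P^-$ has tree-$q_\mathcal X$ norm $<\tfrac12\mathsf e$, so $\mathsf{eng}_k(f)(\mathbb P^-;q_\mathcal X)\leq 2^{-1}\mathsf e$, which is the first assertion. (A standard point, proved as in \cite[Lemma 4.4, 4.7]{MTT2}, is that the greedily selected $\mathsf T_i$ are genuine trees in the sense of Subsection \ref{ss:trees}; I will invoke this without reproving it.)

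It remains to prove the packing bound $\sum_i|I_{\mathsf T_i}|\lesssim_q \mathsf e^{-q}|F|$. For each selected tree $\mathsf T_i$ fix a near-extremal choice of type-$k$ tree operator $T_{\mathsf T_i}$ with $\|T_{\mathsf T_i}f\|_{L^{q_\mathcal X}(\R;\mathcal X)}\geq \tfrac14 |I_{\mathsf T_i}|^{1/q_\mathcal X}\mathsf e$. The idea is to estimate $\sum_i |I_{\mathsf T_i}|^{1/q_\mathcal X}\cdot(\text{coefficients})$ against a single global quantity controlled by $|F|^{1/q}$. Concretely, I interpolate: by the $q_\mathcal X$-intermediate hypothesis $\mathcal X=[\mathcal X_0,\mathcal X_1]_{2/q_\mathcal X}$ with $\mathcal X_0$ UMD and $\mathcal X_1$ Hilbert, and $T_{\mathsf T_i}$ is (uniformly) a Calder\'on--Zygmund operator, hence bounded on $L^r(\R;\mathcal X_0)$ for all $1<r<\infty$ and on $L^2(\R;\mathcal X_1)$; complex interpolation then gives a bound on $L^q(\R;\mathcal X)$ for the chosen $q>q_\mathcal X$, with a constant uniform in $\mathsf T_i$ because the CZ constants of tree operators are uniform. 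The key extra gain over a naive estimate is the \emph{almost orthogonality} of the pieces: since the trees $\mathsf T_i$ are disjoint and $k$-lacunary, the frequency projections entering the different $T_{\mathsf T_i}$ live over disjoint frequency regions $\{\omega_{P_k}:P\in\mathsf T_i\}$, so on the Hilbert space factor $\mathcal X_1$ one has a genuine Bessel/orthogonality inequality $\sum_i\|T_{\mathsf T_i}g\|_{L^2(\R;\mathcal X_1)}^2\lesssim\|g\|_{L^2(\R;\mathcal X_1)}^2$, and on $\mathcal X_0$ one uses the UMD-valued square function / randomized CZ estimate to the same effect in $L^r(\R;\mathcal X_0)$. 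Interpolating these two endpoint square-function bounds (this is exactly the step where the $q$-intermediate property is essential) yields
\[
\Big(\sum_i \|T_{\mathsf T_i}f\|_{L^q(\R;\mathcal X)}^{q}\Big)^{1/q} \lesssim_q \|f\|_{L^q(\R;\mathcal X)} \leq |F|^{1/q},
\]
using $|f|_\mathcal X\leq\mathbf 1_F$ in the last step. Combining with the near-extremality lower bound gives $\sum_i |I_{\mathsf T_i}|\, \mathsf e^{q_\mathcal X}/|I_{\mathsf T_i}|^{\cdots}\ldots$; more precisely, since $\|T_{\mathsf T_i}f\|_{L^q}\gtrsim |I_{\mathsf T_i}|^{1/q}\mathsf e$ — which follows from the $L^{q_\mathcal X}$ lower bound together with the fact that $T_{\mathsf T_i}f$ is essentially supported, up to tails, on $CI_{\mathsf T_i}$ so that H\"older upgrades $q_\mathcal X$ to $q$ at the cost of the uniform CZ/$L^\infty$-BMO bound — we get $\mathsf e^{q}\sum_i|I_{\mathsf T_i}|\lesssim_q |F|$, as desired.

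The main obstacle, and the step requiring the most care, is the almost-orthogonality estimate for the selected trees, namely turning "disjoint $k$-lacunary trees" into a genuine $\ell^q$-valued square-function bound for $\{T_{\mathsf T_i}f\}$ uniformly over the choices of tree operators. In the scalar case this is the standard energy estimate via Bessel's inequality on wave-packet coefficients; here I must instead argue at the level of the (non-discretized) CZ tree operators $T_{\mathsf T_i}$. The natural route is: (i) at the Hilbert endpoint $\mathcal X_1$, disjointness of the frequency supports $\bigcup_{P\in\mathsf T_i}\omega_{P_k}$ across $i$, together with rank-$1$ property b., gives Plancherel-type orthogonality directly; (ii) at the UMD endpoint $\mathcal X_0$, replace orthogonality by the randomized (Rademacher) estimate for sums of CZ operators with disjoint frequency supports, which is available since $\mathcal X_0$ is UMD; (iii) complex interpolation of the vector-valued $\ell^2$-square-function inequalities between these two endpoints produces the $L^q(\R;\mathcal X)$ bound. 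One must also be careful that the "essential spatial support on $I_{\mathsf T_i}$" is only approximate (polynomial tails from \eqref{e:singlescaleM}), so the passage from the $L^{q_\mathcal X}$ tree-norm lower bound to an $L^q$ lower bound, and the summation of the tails across overlapping spatial intervals, needs the standard tail-summation bookkeeping; this is routine given \eqref{e:singlescaleM} and Lemma \ref{l:bernstein}, so I would only sketch it. I would defer the full verification of the interpolated square-function inequality, and of the uniformity of constants, to the body of Section \ref{s:energyproof}.
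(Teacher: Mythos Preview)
Your selection procedure and the overall shape of the argument are along the right lines, but the core almost-orthogonality step has a genuine gap. You assert that ``the frequency projections entering the different $T_{\mathsf T_i}$ live over disjoint frequency regions $\{\omega_{P_k}:P\in\mathsf T_i\}$'' and deduce a clean Bessel inequality $\sum_i\|T_{\mathsf T_i}g\|_{L^2(\R;\mathcal X_1)}^2\lesssim\|g\|_{L^2}^2$ at the Hilbert endpoint via Plancherel. This is false: $k$-lacunarity guarantees that the intervals $\omega_{P_k}$ are pairwise disjoint \emph{within} a single tree, but distinct selected trees $\mathsf T_i\neq\mathsf T_j$ can and generically do contain tri-tiles $P\in\mathsf T_i$, $P'\in\mathsf T_j$ with $\omega_{P_k}=\omega_{P'_k}$ (and $I_P\neq I_{P'}$). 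Nothing in the rank-1 axioms, nor in your selection, prevents this. For the same reason your UMD-endpoint plan, which also rests on ``CZ operators with disjoint frequency supports,'' does not go through.

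What the greedy selection actually buys, provided at each step you remove not only the extremizing $k$-lacunary tree $\mathsf T_i'$ but the full tree $\mathsf T_i\supset\mathsf T_i'$ with the same top (as in \cite[Lemma 7.7]{MTT}; your description removes only $\mathsf T_i'$), is that the family $\{\mathsf T_i'\}$ is \emph{$k$-strongly disjoint}. This is a mixed space-frequency separation, not frequency disjointness, and the Hilbert-space almost-orthogonality it yields (the paper's Lemma \ref{l:ortho2}) is proved by a $TT^*$ argument and carries an unavoidable error term involving $\big(\sum_\T|I_\T|\big)^{1/2}$ on the right-hand side---precisely the quantity to be bounded. The paper then interpolates this error-laden Hilbert estimate against the trivial single-tree $L^\infty\to\mathrm{BMO}$ bound on the $\UMD$ side (Proposition \ref{p:int}), arriving at an inequality of the form $\lambda^{q_\mathcal X}\sum|I_\T|\lesssim|F|+(\sum|I_\T|)^{1-\alpha}|F|^\alpha$, which is solved for $\sum|I_\T|$ using $\lambda\lesssim 1$. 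Your proposed clean inequality $(\sum_i\|T_{\mathsf T_i}f\|_{L^q}^q)^{1/q}\lesssim\|f\|_{L^q}$, without such an error term, is not available.
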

The proof of Lemma \ref{l:energy} is a revisitation of the steps leading to \cite[Proposition 8.4]{HLC} and is postponed to Section \ref{s:energyproof}. Note that Lemma \ref{l:energy} is the only   main step of the proof of Theorem \ref{t:main} where a $q_{\mathcal X}$-intermediate assumption is used.

The final main tool of the proof of Theorem \ref{t:main} is  a bound on the forms \eqref{e:tri-tile} in terms of energy parameters in the particular case where the collection $\mathbb P$ is a tree.
\begin{lem}  \label{l:treeest} Let  ${\mathcal{X}_k}$, $k=1,2,3$  be $\UMD$ spaces and \begin{equation}
\label{e:typetree} 2\leq q_1,q_2, q_3 < \infty, \qquad
\sum_{k=1}^3 \frac{1}{q_{k}} \geq 1.
\end{equation}
 Let $\mathsf T$ be a tree. With reference to \eqref{e:tri-tile} there holds
\[
{|\Lambda_{\mathsf T }(f_1, f_2, f_3)|}  \lesssim |I_{\mathsf T}| \prod_{k=1}^3\mathsf{eng}_k(f_k)(\mathsf T; q_k).
\]
\end{lem}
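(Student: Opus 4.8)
The plan is to reduce the tree form to a sum over the at most three sub-trees $\mathsf T_A$ from the decomposition \eqref{Treesplit}, and on each of these apply the $\UMD$-valued bilinear Calder\'on--Zygmund bound of \cite{DLMV1}, after replacing the three input functions by suitable phase-space projections adapted to the frequency cubes $Q_P$ and the spatial interval $I_{\mathsf T}$. Concretely, since $\Lambda_{\mathsf T}=\sum_{A}\Lambda_{\mathsf T_A}$ and each $\mathsf T_A$ is $k$-lacunary for (at least) the two indices $k\in A$, it suffices to bound a single $\Lambda_{\mathsf T_A}$ with $\#A=2$ or $3$; fix such an $A$ and two distinct lacunary indices, say $1,2\in A$. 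The form $\Lambda_{\mathsf T_A}(f_1,f_2,f_3)=\sum_{P}\int \chi_{I_P}(x)\prod_k T_{m_{P_k}}f_k(x)\,dx$ should, after summing the single-scale pieces, be recognised as $\int \mathsf{T}(g_1,g_2)\cdot g_3$ for an honest trilinear Calder\'on--Zygmund form $\mathsf T$ of the type handled in \cite{DLMV1}, where $g_k$ is an appropriate phase-space projection $\Pi_{\mathsf T,k}f_k$ of $f_k$ onto the space--frequency support of the tree. The key point is that once this reduction is in place, \cite{DLMV1} yields
\[
|\Lambda_{\mathsf T_A}(f_1,f_2,f_3)|\lesssim \prod_{k=1}^3 \|g_k\|_{L^{q_k}(\R;\mathcal X_k)},
\]
provided $\sum_k 1/q_k\ge 1$ with each $q_k\in[2,\infty)$ — precisely hypothesis \eqref{e:typetree} — since \cite{DLMV1} requires the $\UMD$ property of all three spaces and a super-H\"older exponent tuple, which is exactly what is available.

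The main technical input here is Proposition \ref{p:psp}, the vector-valued phase-space projection estimate, which I would use twice. First, it should produce the projections $g_k=\Pi_{\mathsf T,k}f_k$ so that $\Lambda_{\mathsf T_A}(f_1,f_2,f_3)=\Lambda_{\mathsf T_A}(g_1,g_2,g_3)$ up to acceptable error (i.e.\ the tree form only sees $f_k$ through its projection onto the tree's phase-space region, because each $T_{m_{P_k}}$ is frequency-localised to $\omega_{P_k}\subset \omega_{\mathsf T}$ and $\chi_{I_P}$ is spatially concentrated on $I_P\subset I_{\mathsf T}$). Second — and this is where the energies enter — Proposition \ref{p:psp} together with the definition \eqref{e:size}, \eqref{e:engdef} should give the norm control
\[
\|g_k\|_{L^{q_k}(\R;\mathcal X_k)}=\|\Pi_{\mathsf T,k}f_k\|_{L^{q_k}(\R;\mathcal X_k)}\lesssim |I_{\mathsf T}|^{1/q_k}\,\mathsf{eng}_k(f_k)(\mathsf T;q_k).
\]
This is the natural matching: the energy $\|f\|_{\mathsf T,k,q}$ is by definition a supremum of normalized $L^q$ norms of type-$k$ tree operators $T_{\mathsf T}f$, and a phase-space projection of $f$ to the tree is, morally, built precisely out of such tree operators (it is a sum of $S_{P_k}$-type pieces), so its $L^{q_k}$ norm is controlled by $|I_{\mathsf T}|^{1/q_k}$ times that supremum. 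Multiplying the three bounds and summing over the $O(1)$ choices of $A$ gives the claimed
\[
|\Lambda_{\mathsf T}(f_1,f_2,f_3)|\lesssim |I_{\mathsf T}|\prod_{k=1}^3 \mathsf{eng}_k(f_k)(\mathsf T;q_k),
\]
since $\sum_k 1/q_k\ge 1$ forces $\prod_k |I_{\mathsf T}|^{1/q_k}\le |I_{\mathsf T}|$ for $|I_{\mathsf T}|\ge 1$ — and more carefully one must be slightly attentive to whether $|I_{\mathsf T}|$ is $\ge 1$ or $<1$, which is handled because the sharp exponent is exactly $1$ when $\sum 1/q_k=1$ and the other cases follow by the nesting of $L^p$ on the rescaled interval (this is the standard single-scale-to-tree bookkeeping and I would not belabour it).

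The hard part, as the introduction already advertises, is not this lemma per se but the phase-space projection Proposition \ref{p:psp} it rests on; granting that proposition, the remaining obstacles are (i) verifying that the tree form genuinely factors through the CZ form $\mathsf T$ on $g_1,g_2,g_3$ with a kernel satisfying the standard size and smoothness bounds uniformly in $\mathsf T$ — this uses the $S^0_{1,1}$ symbol calculus and the pairwise disjointness of $\{\omega_{P_k}:P\in\mathsf T_A\}$ recorded in the "Tree operators" subsection, together with the smoothing properties of \cite[Lemmata 4.10--4.12]{MTT2} to make the telescoping of single scales rigorous, and (ii) bookkeeping the modulation: since $\xi_{\mathsf T}$ need not be $0$, one conjugates by the modulation $e^{ix\xi_{\mathsf T}}$, which preserves all $\UMD$ estimates and the energies, reducing to the $\xi_{\mathsf T}=0$ case where the CZ structure is manifest. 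I expect step (i), the careful passage from the discrete sum $\sum_P S_{P_k}$ to a bona fide bilinear CZ operator acting on the projected data, to be the most delicate bookkeeping, but it is essentially a vector-valued transcription of the scalar arguments of \cite{MTT2,DT} and poses no conceptual difficulty once Proposition \ref{p:psp} is available.
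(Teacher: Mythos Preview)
Your outline is essentially the paper's own strategy: split via \eqref{Treesplit}, reduce by modulation to $\xi_{\mathsf T}=0$, invoke Proposition~\ref{p:psp} to pass to phase-space projections, and feed these into the $\UMD$ bilinear Calder\'on--Zygmund estimate of \cite{DLMV1} (packaged as Lemma~\ref{l:cz}), with part a.\ of the proposition supplying $\|\Pi_k f_k\|_{L^{p_k}}\lesssim |I_{\mathsf T}|^{1/p_k}\mathsf{eng}_k(f_k)(\mathsf T;q_k)$.

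Two points where the paper is more careful than your sketch. First, the exponent bookkeeping: \cite{DLMV1} (and Lemma~\ref{l:cz}) is applied at a \emph{H\"older} tuple $(p_1,p_2,p_3)$ with $p_k\geq q_k$, which exists precisely because $\sum 1/q_k\geq 1$; one does not apply it directly at the $q_k$, and after the normalization $|I_{\mathsf T}|=1$ the product $\prod |I_{\mathsf T}|^{1/p_k}$ is exactly $1$, so your hedging about $|I_{\mathsf T}|\gtrless 1$ is unnecessary. Second, the replacement $f_k\to \Pi_k f_k$ is \emph{exact} for the lacunary indices via part b.\ of Proposition~\ref{p:psp} ($\sum_{P\in\mathsf T(j)}S_{P_k}f_k=S_j\Pi_kf_k$), but for the overlapping index it is not: the paper substitutes only the overlapping $f_1$ first, generating explicit error terms $\mathsf{ERR}_1,\mathsf{ERR}_2$ that are controlled scale-by-scale via part c.\ of the proposition together with the Carleson-type summability of $\mu_j$ from \cite[Lemma 4.12]{MTT2}. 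Your phrase ``up to acceptable error'' hides this asymmetry, which is where the actual work lies.
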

The proof of Lemma \ref{l:treeest} uses a novel vector-valued version of the phase-space projection technique of \cite{MTT2,DT} in conjunction with \cite[Theorem 1.2]{DLMV1} and is given in  Section \ref{s:treeproof}.

\section{Phase space projections and the proof of the tree lemma} \label{s:treeproof}
We develop phase-space projections in the vector-valued context and  combine them with the bounds for vector-valued extensions of bilinear CZ kernels to prove Lemma \ref{l:treeest}. The following treatment is an adaptation of the construction made in \cite[Sec.\ 7 and 8]{MTT2}. Our arguments are more involved due to the  vector-valued nature of the involved functions. However, we take advantage of a significant simplification in that no uniformity issues are considered: in the language of \cite{MTT2} the indices $\mathbf{m}_i$ are all zero.  Uniform estimates in the vector-valued context will be the object of future work.

In the main proposition of this section, we  make use of Littlewood-Paley projections as follows.   The operator $T_j$ is meant to be a Fourier multiplier whose symbol is real, even, supported on $[-2^{J(j+2)}, 2^{J(j+2)}]$ and equals one on $[-2^{J(j+1)}, 2^{J(j+1)}]$ and $S_j=T_{j}-T_{j-1}$. The projections $S_j$ appear also in Lemma \ref{l:cz} below.
\begin{prop}[phase-space projections] \label{p:psp} For $k=1,2,3,$ let ${\mathcal{X}_k} $ be  a $\UMD$ space and  $q_k\in [2,\infty)$. Let  $\mathsf{T}$ be a tree with the following properties:
\begin{itemize}
\item[i.]
 $\xi_\T=0$;
 \item[ii.]
$\T$ is lacunary in the components $k\in A$ and overlapping in the components $k \in B$, with $A\cup B=\{1,2,3\}$ disjoint union and $\#A\in \{2,3\}$;
\item[iii.] the separation of scales condition \[\inf\{|j-j'|: j,j'\in \mathbf{j}_\T, j\neq j'\} \geq M\] holds for some large constant $M$.
\end{itemize}
Choose $\{S_{P_k}\in \mathbb S_{P_k}, P \in \T, k \in \{1,2,3\}\}. $
 Then there are   linear operators $\Pi_k$ with the following properties.
\begin{itemize}
\item[a.] If $p\geq q_{k}$, there holds
\begin{equation}
\label{e:pspa}
\|\Pi_k f\|_{L^{p}(\R; {\mathcal{X}_k})} \lesssim |I_{\mathsf T}|^{\frac{1}{p}}  \mathsf{eng}_k(f)(\mathsf T; q_k);
\end{equation}
\item[b.] if $k\in A$, for all $j\in \mathbf{j}_\T$, referring to \eqref{e:Tj}
\[
\sum_{P \in \T(j)} S_{P_k} f=    S_j (\Pi_k f).
\]
\item [c.] If $p\geq q_{k}$, $k\in B$, $j_0\in \mathbf{j}_{\mathsf{T}}$ and $\ell(I_0) =2^{-Jj_0}$,
\begin{equation}
\label{e:pspc}
\left\| \mathbf{1}_{I_0}  \sum_{P \in \T(j_0)} S_{P_k} (f - \Pi_k f)\right\|_{L^{p}(\R; {\mathcal{X}_k})} \lesssim  {|I_0|^{\frac{1}{p}}}\mathsf{eng}_k(f)(\mathsf T; q_k) \int_{\R} \chi_{I_0}(x) \mu_{j_0}(x) \frac{\mathrm{d} x}{|I_0|}.
\end{equation}
where  $\mu_j$ is as in \cite[eq.\ (51)]{MTT2}. Furthermore
\begin{equation}
\label{e:pspc2}
 \left\| \mathbf{1}_{I_0} \sum_{P \in \T(j_0)} S_{P_k}  \Pi_k f\right\|_{L^{p}(\R; {\mathcal{X}_k})}  \lesssim |I_{0}|^{\frac{1}{p}}  \mathsf{eng}_k(f)(\mathsf T; q_k).
\end{equation}
\end{itemize}While the operators $\Pi_k$ depend on the choice of $\{S_{P_k}\in \mathbb S_{P_k}, P \in \T, k \in \{1,2,3\}\}, $ the  estimates above are uniform over such choice.
\end{prop}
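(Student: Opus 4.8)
\textbf{Proof strategy for Proposition \ref{p:psp}.} The plan is to build the operators $\Pi_k$ by an iterative (telescoping) construction in the frequency scales $j \in \mathbf{j}_\T$, following the blueprint of \cite[Sec.\ 7--8]{MTT2} but carrying the $\UMD$-valued functions throughout. For the lacunary components $k \in A$ the defining requirement is property b., so I would \emph{define} $\Pi_k$ essentially by inverting the relation $\sum_{P \in \T(j)} S_{P_k} f = S_j(\Pi_k f)$ scale by scale: set $\Pi_k f = \sum_{j \in \mathbf{j}_\T} R_j\big(\sum_{P\in\T(j)} S_{P_k} f\big)$ where $R_j$ is a suitable ``reconstruction'' multiplier that is the identity on the frequency support of $S_j$ and is controlled using the separation of scales hypothesis iii. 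The $L^p$ bound \eqref{e:pspa} for $k \in A$ then follows by a vector-valued square function / Littlewood--Paley argument: the blocks $\sum_{P\in\T(j)} S_{P_k} f$ live at frequency scale $\sim 2^{jJ}$ and are well-separated, so by the $\UMD$-valued Littlewood--Paley inequality the $L^p(\R;\mathcal X_k)$ norm is comparable to the randomized/square sum of the pieces, and each piece is exactly (a type-$k$) $T_{\T(j)} f$ whose local $L^{q_k}$ norm is dominated by $\mathsf{eng}_k(f)(\T;q_k)$; one upgrades the local $q_k$ control to the global $p \geq q_k$ control by summing the spatial localization sets $E_{Q,\T}$, which are nested by property c.\ of the tree, exactly as in the scalar argument. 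Here the $q$-intermediate hypothesis is \emph{not} needed: one only uses that $\mathcal X_k$ is $\UMD$, which is where \cite[Theorem 1.2]{DLMV1}-style reasoning and the vector-valued Littlewood--Paley theory enter.

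For the overlapping components $k \in B$ the definition of $\Pi_k$ is more delicate because there is no clean scale-by-scale inversion: the tiles $\omega_{P_k}$, $P\in\T(j)$, now cover a full neighborhood of the top frequency $\xi_\T = 0$ rather than a lacunary annulus. Following \cite{MTT2}, I would build $\Pi_k f$ recursively: having defined the portion of $\Pi_k f$ coming from scales $j' < j_0$ (finest first), the new contribution at scale $j_0$ is chosen so that the \emph{defect} $\sum_{P\in\T(j_0)} S_{P_k}(f - \Pi_k f)$, after localization to any $I_0$ with $\ell(I_0) = 2^{-Jj_0}$, is small --- precisely, bounded in $L^p(\R;\mathcal X_k)$ by $|I_0|^{1/p}\,\mathsf{eng}_k(f)(\T;q_k)$ times the tail mass $\int \chi_{I_0}\mu_{j_0}\,\mathrm{d}x/|I_0|$, which is \eqref{e:pspc}. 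This uses the smoothing lemmata \cite[Lemmata 4.10--4.12]{MTT2} for greedily-built trees (the ``$\mathbf{m}_i = 0$'' simplification the authors flag means no uniformity bookkeeping is required, only the single-scale smoothing) together with the kernel decay \eqref{e:singlescaleM}, Bernstein's inequality Lemma \ref{l:bernstein} applied with weight $w = \chi_{I_0}$ at scale $R \sim \ell(I_0)^{-1}$ to pass between $L^p$ and $L^{q_k}$ norms on the localized pieces, and the essential-constancy of $\chi_{I_0}$ over $I_0$. Estimate \eqref{e:pspc2} is then the complement: $S_{P_k}\Pi_k f = S_{P_k} f - S_{P_k}(f - \Pi_k f)$, the first term summed over $P\in\T(j_0)$ localized to $I_0$ is controlled directly by the energy through a single-scale argument (the $\omega_{P_k}$ at fixed scale are disjoint, giving an $L^{q_k}$ single-scale Calder\'on--Zygmund bound, upgraded to $L^p$ as before), and the second term is \eqref{e:pspc}. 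Finally the global $L^p$ bound \eqref{e:pspa} for $k\in B$ follows by summing the scales $j_0\in\mathbf{j}_\T$: the separation of scales iii.\ makes the spatial tails $\mu_{j_0}$ and the nested localization sets combine to a geometric series, so the total is again $\lesssim |I_\T|^{1/p}\mathsf{eng}_k(f)(\T;q_k)$.

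The uniformity assertion --- that the bounds do not depend on the choice of $\{S_{P_k}\in\mathbb S_{P_k}\}$ --- is automatic from the construction, since at every step the only properties of $S_{P_k}$ invoked are membership in the class $\mathbb S_{P_k}$ (i.e.\ the implied constants in \eqref{e:singlescaleM}, the adaptedness $\psi_{P_k}\in\Psi_{I_P}$, $m_{P_k}\in M_{\omega_{P_k}}$) and the $S^0_{1,1}$/disjointness facts about $T_\T$, all of which hold with constants depending only on the fixed parameter range in the Remark of Section \ref{s:model}. The definition of $\Pi_k$ does depend on the chosen $S_{P_k}$, but the estimates are stated purely in terms of $\mathsf{eng}_k(f)$, which itself is a supremum over all such choices, so there is no circularity.

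\textbf{Main obstacle.} I expect the hardest part to be the overlapping case $k\in B$, specifically making the recursive defect estimate \eqref{e:pspc} work in the vector-valued setting: in the scalar case of \cite{MTT2} one exploits pointwise/$L^2$ orthogonality and explicit oscillatory kernel manipulations, whereas here one must replace those with $\UMD$-valued Littlewood--Paley square function bounds and Bernstein-type $L^p$--$L^{q_k}$ comparisons, and carefully track that the ``error'' introduced at each scale (controlled by $\mu_{j_0}$) really does telescope rather than accumulate --- this is exactly where hypothesis iii.\ on separation of scales is spent. A secondary technical point is ensuring that the reconstruction operators $R_j$ (lacunary case) and the scale-$j_0$ corrections (overlapping case) are themselves Calder\'on--Zygmund / $S^0_{1,1}$ operators so that they are $L^p(\R;\mathcal X_k)$-bounded for $\UMD$ targets in the first place; this follows from the frequency-support bookkeeping but needs to be stated carefully given the non-lattice nature of $\mathcal X_k$.
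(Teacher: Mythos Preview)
Your overall architecture---treat $k\in A$ and $k\in B$ separately, define $\Pi_k$, then verify a., b., c.---matches the paper, but the key technical devices differ, and in the overlapping case a crucial idea is missing.

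For $k \in A$ the paper simply sets $\Pi_k f = \sum_{j\in\mathbf{j}_\T}\sum_{P\in\T(j)} S_{P_k} f$; no reconstruction $R_j$ is needed because the Fourier support of $\sum_{P\in\T(j)}S_{P_k} f$ already lies where the symbol of $S_j$ equals $1$, so b.\ is immediate. The case $p=q_k$ of \eqref{e:pspa} is then literally the definition of energy, since $\Pi_k$ is itself a tree operator. For $p>q_k$ the paper proves a $\mathrm{BMO}(\R;\mathcal X_k)$ endpoint---splitting each test interval into large scales (Poincar\'e) and small scales (localization trick)---and interpolates. Your randomized square-function does give $\|\Pi_k f\|_p \sim \mathbb{E}\|\sum_j \sigma_j \Pi_{k,j} f\|_p$, and each randomized sum is again a tree operator, so $p=q_k$ is fine; but for $p>q_k$ your ``upgrade via nestedness of $E_{Q,\T}$'' is not a proof. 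Nestedness alone does not bridge $L^{q_k}$ to $L^p$; what one actually needs is local $L^{q_k}$ control over \emph{all subtrees}, i.e.\ a BMO-type estimate, which is exactly the paper's route.

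The real gap is in the overlapping case $k\in B$. The paper's $\Pi_k$ is \emph{not} built recursively. One first sets $\widetilde{\Pi_k} f(x) = T_{j(x)} f(x)$, where $j(x)$ is the finest $j$ with $x\in\tilde E_j$; this is a direct, non-iterative maximal-scale truncation. Then---and this is the device your proposal lacks---one subtracts explicit mean-zero corrections $\sum_{j,I,\star} c_{I,j}^\star \phi_{I,j}^\star$, with $\mathcal X$-valued coefficients $c_{I,j}^\star$ chosen so that the pieces $F_I = [(1-T_{j-1})H_I^\star] S_j f - c_{I,j}^\star \phi_{I,j}^\star$ have integral zero. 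This mean-zero property is what drives \eqref{e:pspc}: it lets one pass to the antiderivative $\Phi_I$ of $F_I$, write $O F_I = 2^{Jj_0}\widetilde O\,\Phi_I$ with $\widetilde O$ again a tree operator, and thereby gain the factor $2^{-(j-j_0)/100}$ needed for the $\mu_{j_0}$-decay. A generic recursive defect scheme has no mechanism for this cancellation, and without it the fine-scale contributions will not sum.

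Separately, your plan to deduce \eqref{e:pspa} for $k\in B$ from \eqref{e:pspc} is inverted: \eqref{e:pspc} controls $\sum_P S_{P_k}(f-\Pi_k f)$ localized to $I_0$, not $\Pi_k f$ itself, and there is no clean way to recover the global norm of $\Pi_k f$ from these defects. The paper instead proves \eqref{e:pspa} directly from the explicit formula for $\Pi_k$: the correction coefficients are bounded by the energy via Lemma~\ref{l:coeff}, the intervals $I_j^\star$ are well-separated, and $\widetilde{\Pi_k}$ is handled by single-tile tree-operator bounds over a sparse family of intervals, yielding $L^\infty$ and $L^{q_k}$ endpoints which are then interpolated.
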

The proof of Proposition \ref{p:psp} is postponed to  the next section. Herein, we proceed to showing how such proposition may be coupled with the main result of \cite{DLMV1} to obtain the tree estimate we claimed in Lemma \ref{l:treeest}. The next subsection contains some preliminaries while the main line of argument is given in Subsection~\ref{ss:pflemmatree}
\subsection{Preliminaries} We begin with a preliminary localized single scale estimate for tree operators which will be of use  towards Lemma \ref{l:treeest} as well as in Section \ref{s:pfpsp}.
\begin{lem} \label{l:sscale} Let $I_0$ be a $J$-dyadic interval with $\ell(I_0)=2^{-j_0J}$, $\psi_{I_0}\in \Psi_{I_0}$, $S_{P_k}\in \mathbb{S}_{P_k}$ for each $P \in \T(j_0)$. Then
\begin{equation}
\label{e:pspc2a}
 \left\|\psi_{I_0}  \sum_{P \in \T(j_0)} S_{P_k}    f\right\|_{L^{p}(\R; {\mathcal{X}_k})}  \lesssim |I _0|^{\frac{1}{p}}  \mathsf{eng}_k(f)(\mathsf T;q_k), \qquad    q_k\leq p\leq \infty.
\end{equation}
\end{lem}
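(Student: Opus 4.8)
The plan is to reduce the single-scale estimate \eqref{e:pspc2a} to a statement about a single type-$k$ tree operator associated to a one-scale subtree, and then to invoke the definition of the energy in \eqref{e:size}. First I would observe that the restriction $\T(j_0)$ of the tree to the scale $2^{j_0 J}$ is itself a tree with top data $(I_{\T}, \xi_\T)$ (property b.\ of trees is automatic since only one scale survives, and property c.\ is vacuous), and by hypothesis the operators $\{S_{P_k}: P \in \T(j_0)\}$ are of the required class. So $\sum_{P \in \T(j_0)} S_{P_k}$ is precisely a type-$k$ tree operator $T_{\T(j_0)}$ in the sense of \eqref{e:treeop}, and the target of the supremum defining $\mathsf{eng}_k(f)(\T; q_k)$ dominates $|I_\T|^{-1/q_k}\|T_{\T(j_0)} f\|_{L^{q_k}(\R; {\mathcal{X}_k})}$ — though I must be a little careful since the energy is a supremum over subtrees and over operator choices, and $\T(j_0)$ may not literally be of the form $\mathsf{T}'\subset \mathbb P$ with the \emph{same} top interval; here the cleanest route is to note that the $L^{q_k}$ norm of $T_{\T(j_0)}f$ over \emph{any} tree containing these tiles is controlled, and $|I_0| \le |I_\T|$.

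Next I would deal with the weight $\psi_{I_0} \in \Psi_{I_0}$ and the exponent range $q_k \le p \le \infty$. The key point is that $\sum_{P \in \T(j_0)} S_{P_k} f$ has a frequency support contained in a ball of radius $O(\ell(I_0)^{-1})$, since each $S_{P_k}$ maps into functions frequency-supported in $\beta\omega_{P_k}$ and all $\omega_{P_k}$, $P \in \T(j_0)$, have length $2^{j_0 J}$ and — by rank-$1$ property a.\ together with $\xi_\T \in \omega_{P_k}$ for all such $P$ — live in a bounded-overlap family clustered near $\xi_\T$ at that scale. (If one prefers not to rely on $\xi_\T=0$, the frequency localization is simply around $\xi_\T$ at scale $O(\ell(I_0)^{-1})$, which is all that is needed.) Therefore I can apply the vector-valued Bernstein inequality, Lemma \ref{l:bernstein}, with $R \sim \ell(I_0)^{-1}$ and weight $w = \psi_{I_0}$ — which is essentially constant at scale $\ell(I_0)$ because of \eqref{e:frequencylocalized2} — to pass from an $L^{q_k}$ bound to the $L^\infty$ bound, and then interpolate (or just restrict) to get all intermediate $p$. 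Concretely: it suffices to prove \eqref{e:pspc2a} for $p=q_k$, since the case $p=\infty$ follows from $p=q_k$ by Lemma \ref{l:bernstein}, and the intermediate cases follow by the elementary inequality $\|g\|_{L^p} \le \|g\|_{L^{q_k}}^{q_k/p}\|g\|_{L^\infty}^{1-q_k/p}$ applied to $g = \psi_{I_0}\sum_{P} S_{P_k}f$, after absorbing the factor $|I_0|^{1/q_k}$ appropriately.

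For the base case $p = q_k$, I would use the pointwise decay of $\psi_{I_0}$: write $\psi_{I_0} = \psi_{I_0}\mathbf{1}_{3I_0} + \psi_{I_0}\mathbf{1}_{(3I_0)^c}$ and on the far part exploit $|\psi_{I_0}(x)| \lesssim \langle (x-c(I_0))/\ell(I_0)\rangle^{-N}$ together with the single-scale kernel bound \eqref{e:singlescaleM} for each $S_{P_k}f$ — more efficiently, bound $|\psi_{I_0}(x)|\,|\sum_P S_{P_k}f(x)|_{{\mathcal{X}_k}}$ by a rapidly decaying tail times $\|T_{\T(j_0)}f\|_{L^{q_k}}$-type quantities via Hölder, so that the $L^{q_k}$ norm of the far part is absorbed; on $3I_0$ I would simply use $\|\psi_{I_0}\|_\infty \lesssim 1$ and the energy bound, noting $|3I_0|^{1/q_k} \sim |I_0|^{1/q_k}$. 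Summing the two contributions gives $\|\psi_{I_0}\sum_P S_{P_k}f\|_{L^{q_k}} \lesssim |I_0|^{1/q_k}\,\mathsf{eng}_k(f)(\T; q_k)$.

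The main obstacle I anticipate is the bookkeeping in the first paragraph: matching $\sum_{P\in\T(j_0)} S_{P_k}$ exactly to an admissible tree operator in the supremum defining $\mathsf{eng}_k(f)(\T;q_k)$, and in particular reconciling the localization interval $I_0$ (which appears in the target $|I_0|^{1/p}$) with the tree-top interval $I_\T$ (which appears in the energy normalization $|I_\T|^{-1/q_k}$). The resolution is that $\T(j_0)$ together with the chosen $S_{P_k}$ defines a tree operator for the tree $\T$ itself, whose $L^{q_k}(\R;{\mathcal{X}_k})$ norm is therefore $\le |I_\T|^{1/q_k}\mathsf{eng}_k(f)(\T;q_k)$; the localizing weight $\psi_{I_0}$ then upgrades $|I_\T|^{1/q_k}$ to $|I_0|^{1/q_k}$ precisely through the decay/Bernstein argument of the last two paragraphs, because $\psi_{I_0}$ concentrates the output on $I_0$ and off $I_0$ the tail is summable. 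This is exactly the mechanism by which \eqref{e:pspc2a} is a genuinely \emph{localized} refinement of the global energy bound.
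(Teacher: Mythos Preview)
Your overall architecture (prove $p=q_k$, deduce $p=\infty$ by Bernstein, interpolate) matches the paper's. The gap is in your $p=q_k$ argument. Splitting $\psi_{I_0}$ into near and far pieces on $3I_0$ and then invoking ``the energy bound'' on the near part does not produce $|I_0|^{1/q_k}$: the only tree operator you have actually assembled is $\sum_{P\in\T(j_0)}S_{P_k}$, and the definition \eqref{e:size}--\eqref{e:engdef} normalizes its $L^{q_k}$ norm by $|I_\T|^{1/q_k}$, not by $|I_0|^{1/q_k}$. The decay of $\psi_{I_0}$ off $3I_0$ cannot repair this, since it controls where the output lives, not its size on $3I_0$. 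Your far-part alternative via \eqref{e:singlescaleM} lands on $\mathrm M(|f|_{\mathcal X_k})$, which is not the energy either.

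The paper's mechanism, and the idea your plan is missing, is to localize \emph{tile by tile} rather than in the $x$ variable. Write $S_{P_k}f=\psi_P T_{m_{P_k}}f$ with $\psi_P\in\Psi_{I_P}$. Since $\psi_{I_0}$ and $\psi_P$ are both frequency-supported at scale $\sim\ell(I_0)^{-1}$ and spatially centered at points $\sim n\ell(I_0)$ apart (with $n\ell(I_0)=\dist(I_P,I_0)$), the product $\tilde\psi:=\langle n\rangle^{100}\psi_{I_0}\psi_P$ belongs to $\Psi_{I_P}$. Hence $f\mapsto\tilde\psi\, T_{m_{P_k}}f$ is an admissible element of $\mathbb S_{P_k}$, and the singleton $\{P\}\subset\T$ is itself a $k$-lacunary tree with top interval $I_P$. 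The energy definition then gives
\[
\|\psi_{I_0} S_{P_k} f\|_{L^{q_k}(\R;\mathcal X_k)}
=\langle n\rangle^{-100}\|\tilde\psi\, T_{m_{P_k}} f\|_{L^{q_k}(\R;\mathcal X_k)}
\le \langle n\rangle^{-100}|I_P|^{1/q_k}\,\mathsf{eng}_k(f)(\T;q_k).
\]
Since $|I_P|=|I_0|$ and at most two $P\in\T(j_0)$ share a given value of $n$, summing over $n$ finishes the case $p=q_k$. This is precisely how $|I_0|^{1/q_k}$, rather than $|I_\T|^{1/q_k}$, is produced.

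A minor technical point on your Bernstein step: $\psi_{I_0}\in\Psi_{I_0}$ carries only the upper bound \eqref{e:frequencylocalized2} and may vanish, so it need not satisfy the two-sided weight hypothesis of Lemma~\ref{l:bernstein}. The paper instead applies Lemma~\ref{l:bernstein} with the trivial weight $w\equiv1$ to the product $g=\psi_{I_0}\sum_{P\in\T(j_0)} S_{P_k}f$, which is itself frequency-supported in a ball of radius $O(\ell(I_0)^{-1})$.
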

Arguing by interpolation, it suffices to prove the extremal cases.
\begin{proof}[Proof of \eqref{e:pspc2a} for $p=q_k$] Let  $n\in \mathbb N$. There are at most two $P\in \T(j_0)$ such that  $\dist(I_P,I_0) =n2^{-j_0J}$. Fix such a $P$. It then suffices to estimate
\[
 \left\|\psi_{I_0}   S_{P_k}    f\right\|_{L^{q_k}(\R; {\mathcal{X}_k})}  \lesssim \langle n
\rangle^{-100} |I _0|^{\frac{1}{q_k}}  \mathsf{eng}_k(f)(\mathsf T;q_k).
\] Write $S_{P_k} f= \psi_{P} T_{m_{P_k}} f $. Then $\widetilde\psi\coloneqq \langle n
\rangle^{100}  \psi_{I_0}\psi_{P} \in \Psi_{I_P} $
and the estimate in the last display simply follows from the definition of $\mathsf{eng}_k(f)(\T; q_k)$.
\end{proof}
\begin{proof}[Proof of \eqref{e:pspc2a} for $p=\infty$] The function we are estimating has frequency support in a ball of radius $O(2^{j_0J})$. Then this case follows from the case $p=q_k$  and a straightforward application of Lemma \ref{l:bernstein}.
\end{proof}
We then particularize the definition \eqref{e:tri-tile} to the case where $\mathbb P$ is our tree $\mathsf \T$. By modulation, translation and scaling invariance, we may reduce  Lemma \ref{l:treeest} to the case $\xi_\T=0, I_\T=[0,1)$.   Notice that, referring to \eqref{e:q}, \eqref{e:Tj}, we have  $Q_{P}=Q_{P'}\coloneqq Q_j$ for all $P, P'\in \T(j)$,  and that consequently $m_{P_k}=m_{P'_k}\coloneqq m_{j,k}$ for all $P,P' \in \T(j),k=1,2,3$. Therefore we may set for $j \in\mathbf{j}_\T$
\begin{align}
\label{e:treepre1} &\tilde \chi_j \coloneqq \sum_{P \in \T(j)} \chi_{I_P}^3
\\
\label{e:treepre2}  & \tilde\pi_{j,k}\coloneqq  T_{m_{j,k}}, \qquad k=1,2,3
\end{align}
and rewrite, and subsequently estimate, \eqref{e:tri-tile} for $\T=\mathbb P$ as, cf.\ \cite[eq.\ (44)]{MTT2}
\begin{equation}
\label{e:treepre3}
\left|\Lambda_\T(f_1,f_2,f_3)\right| = \left| \sum_{j \in \mathbf{j}_\T} \int_{\R} \tilde \chi_j \prod_{k=1}^3 \tilde \pi_{j,k} f_k  \right|
\leq  \sum_{j \in \mathbf{j}_\T} \left|   \int_{\R} \tilde \chi_j \prod_{k=1}^3 \tilde \pi_{j,k} f_k  \right|.
\end{equation}
An analogous argument to the one that proves \cite[Lemma 7.3]{MTT2}, provided that  the vector-valued Lemma \ref{l:sscale} is used in place of \cite[Lemma 7.2]{MTT2}, yields that
\begin{equation}
\label{e:treepre4}
  \sum_{j \in \mathbf{j}_\T} \left|   \int_{\R} \tilde \chi_j(x) \prod_{k=1}^3 \tilde \pi_{j,k} f_k (x) -  \prod_{k=1}^3 \tilde \chi_j(x) \tilde \pi_{j,k} f_k(x) \, \mathrm{d} x   \right| \lesssim \prod_{k=1}^3\mathsf{eng}_k(f_k)(\mathsf T; q_k)
\end{equation}
which is acceptable for the estimate of Lemma \ref{l:treeest}. Therefore we have reduced the Lemma to proving that
\begin{equation}
\label{e:treepre5} \left|
  \sum_{j \in \mathbf{j}_\T} \eps_j   \int_{\R}   \prod_{k=1}^3 \tilde \chi_j  \tilde \pi_{j,k} f_k \right|     \lesssim \prod_{k=1}^3\mathsf{eng}_k(f_k)(\mathsf T; q_k)
\end{equation}
uniformly over choices of unimodular coefficients $\{\eps_j: j \in \mathbf{j}_\T\} $
which is the core of the argument, and is left for the next subsection.

The final preliminary result is a H\"older type estimate for the vector-valued extension of a classical trilinear paraproduct form. Such estimate is a particular case of the main result of \cite{DLMV1} and depends only on the $\UMD$ property of the spaces involved.
\begin{lem} \label{l:cz} Let $\{p_k:k=1,2,3\}$ be a H\"older tuple of exponents with $1<p_k<\infty$ for all $k=1,2,3$. Let $\mathcal X_k$ be $\UMD$ spaces with a trilinear contraction $\prod_{k=1}^3 \mathcal X_k \to \mathbb C$. Let $g_k\in (L^1(\R) \cap L^\infty (\R)) \otimes \mathcal X_k$, for $k=1,2,3$. Then
\[
\left| \int
\sum_{j \in \mathbf{j}_\T} \eps_j (\tilde \pi_{j,1}g_1) (S_{j}g_2)   (S_{j}g_3) \right| \lesssim \prod_{k=1}^3 \|g_k\|_{L^{p_k}(\R; \mathcal X_k)}.
\]
\end{lem}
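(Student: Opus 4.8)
The plan is to recognize the trilinear form
\[
\Lambda(g_1,g_2,g_3)\coloneqq \int\sum_{j\in\mathbf j_\T}\eps_j\,(\tilde\pi_{j,1}g_1)\,(S_jg_2)\,(S_jg_3)
\]
as the vector-valued extension, along the given trilinear contraction, of a classical scalar trilinear paraproduct, and then to read off its boundedness from \cite[Theorem 1.2]{DLMV1}. Since the tree $\T$ is finite, $\mathbf j_\T$ is finite and every sum is finite; the whole point of the statement is that the bound be \emph{uniform} over $\T$ and over the unimodular coefficients $\{\eps_j:j\in\mathbf j_\T\}$. First I would dualize in the first entry, writing $\Lambda(g_1,g_2,g_3)=\langle g_1,B(g_2,g_3)\rangle$ with
\[
B(g_2,g_3)\coloneqq \sum_{j\in\mathbf j_\T}\eps_j\,\tilde\pi_{j,1}^{\mathsf{t}}\big[(S_jg_2)(S_jg_3)\big],
\]
where $\tilde\pi_{j,1}^{\mathsf{t}}=T_{m_{j,1}(-\,\cdot\,)}$ is the transpose multiplier and the product $(S_jg_2)(S_jg_3)$ is $\mathcal X_1'$-valued via the bounded bilinear map $\mathcal X_2\times\mathcal X_3\to\mathcal X_1'$ induced by the trilinear contraction, so that $B(g_2,g_3)$ is $\mathcal X_1'$-valued too. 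As $\UMD$ spaces are reflexive, duality between $L^{p_1}(\R;\mathcal X_1)$ and $L^{p_1'}(\R;\mathcal X_1')$ reduces the lemma to the bilinear bound
\[
\|B(g_2,g_3)\|_{L^{p_1'}(\R;\mathcal X_1')}\lesssim \|g_2\|_{L^{p_2}(\R;\mathcal X_2)}\,\|g_3\|_{L^{p_3}(\R;\mathcal X_3)},\qquad \tfrac1{p_1'}=\tfrac1{p_2}+\tfrac1{p_3},
\]
uniform as above.

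Next I would check that $B$ is the vector-valued extension of the scalar bilinear operator $\tilde B(h_2,h_3)=\sum_j\eps_j\tilde\pi_{j,1}^{\mathsf{t}}[(S_jh_2)(S_jh_3)]$, which is a bilinear Calder\'on--Zygmund operator of the type covered by \cite[Theorem 1.2]{DLMV1}, with Calder\'on--Zygmund constant $O(1)$ uniformly in all parameters. For the \emph{kernel}: recalling that for $P\in\T(j)$ each $\omega_{P_k}$ is an interval of length $2^{jJ}$, the convolution kernels of $\tilde\pi_{j,1}^{\mathsf{t}}$ and of $S_j$ are bumps at scale $2^{-jJ}$ with all derivatives controlled, so the kernel $K_j$ of $(h_2,h_3)\mapsto\tilde\pi_{j,1}^{\mathsf{t}}[(S_jh_2)(S_jh_3)]$ obeys the Coifman--Meyer bounds $|\partial^\alpha K_j(x,y_2,y_3)|\lesssim_\alpha 2^{(2+|\alpha|)jJ}\prod_{i=2,3}\langle 2^{jJ}(x-y_i)\rangle^{-100}$; summing the geometric series over $j$ shows that $\sum_j\eps_j K_j$ is a standard bilinear Calder\'on--Zygmund kernel with constant $O(1)$. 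The \emph{a priori bound} $\|\tilde B(h_2,h_3)\|_{L^1(\R)}\lesssim\|h_2\|_{L^2(\R)}\|h_3\|_{L^2(\R)}$ and the weak boundedness property follow from the classical scalar Coifman--Meyer argument (Littlewood--Paley plus Cauchy--Schwarz). Finally, \emph{cancellation}: the symbol of $S_j$ vanishes at the origin, hence $S_j\mathbf 1=0$, and consequently each of the three canonical testing functions $\tilde B(\mathbf 1,\mathbf 1)$ and the two analogous quantities obtained by transposing $\tilde B$ in either input entry vanishes identically, because in each case a factor $S_j\mathbf 1=0$ occurs; thus $\tilde B$ carries no paraproduct component. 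These facts place $\tilde B$ within the scope of \cite[Theorem 1.2]{DLMV1}, whose conclusion --- depending only on the $\UMD$ property of the spaces involved --- is precisely the displayed bilinear estimate for every H\"older pair $1<p_2,p_3<\infty$; this is the lemma in the full open H\"older range.

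I do not expect a genuine obstacle: Lemma \ref{l:cz} is essentially a dictionary entry translating the building block of \eqref{e:treepre5} into the framework of \cite{DLMV1}. The one point that merits care is the \emph{uniformity} of the bilinear Calder\'on--Zygmund constant of $\tilde B$ over the finite tree $\T$ and over the coefficients $\{\eps_j\}$; this is immediate from the scale-localized structure above --- each summand is a normalized Coifman--Meyer block at frequency scale $2^{jJ}$, and the contributions recombine via a plain geometric series --- and it uses no further structural property of $\T$, not even the separation-of-scales condition of Proposition \ref{p:psp}.
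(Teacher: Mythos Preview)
Your proposal is correct and follows essentially the same approach as the paper: identify the form as the dual pairing of a bilinear Calder\'on--Zygmund operator and invoke \cite[Theorem 1.2]{DLMV1}. The paper's execution is slightly more economical in that it dualizes in the third slot, writes the resulting bilinear operator $O(g_1,g_2)$ directly as a bilinear Fourier multiplier with symbol $m(\xi_1,\xi_2)=\sum_{j}\eps_j\,m_{j,1}(\xi_1)\Psi_j(\xi_2)\Psi_j(-\xi_1-\xi_2)$, and verifies the Coifman--Meyer condition on the symbol side in one stroke---this bypasses your separate kernel, a priori, and $T1$ verifications, though your route is of course equally valid.
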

\begin{proof} Recall that  $m_{j,1}$, the symbol of $\tilde\pi_{j,1}$, is  adapted and supported in a moderate dilate of $Q_{j,1}$, which is a dyadic interval of length $2^{jJ}$ and such that $2^{\frac J2 }Q_{j,1}$ contains the origin. Thus $m_{j,1}$ vanishes outside $|\xi|\leq 2^{J(j+\frac12)}$.  The  symbol $\Psi_j$  of $S_j$  is supported on $2^{J(j+1)}\leq |\xi|\leq 2^{J(j+2)}$.  Let $g_k\in L^1(\R) \cap L^\infty (\R)$ be scalar functions. Then Plancherel's equality yields
\begin{equation}
\label{e:cz1}
\sum_{j \in \mathbf{j}_\T} \eps_j (\tilde\pi_{j,1}g_1) (S_{j}g_2)   (S_{j}g_3) = \langle O(g_1,g_2), \overline{g_3} \rangle,
\end{equation}
where $O$ is the bilinear Fourier multiplier operator
\[\begin{split}
&O(g_1,g_2)(x) = \int_{\R\times \R} \widehat{g_1} (\xi)\widehat{g_2} (\xi) m(\xi_1,\xi_2) e^{2\pi i x(\xi_1+\xi_2)}\mathrm{d} \xi_1 \mathrm{d} \xi_2, \qquad x \in \R,
\\
&m(\xi_1,\xi_2) \coloneqq\sum_{j \in \mathbf{j}_\T} \eps_j m_{j,1}(\xi_1)\Psi_j(\xi_2)\Psi_j(-\xi_1-\xi_2).
\end{split}
\]
The support and smoothness conditions on $ m_{j,1},\Psi_j$ imply that $m$ satisfies the  Coifman-Meyer condition  multiplier and thus $O$ is  a bilinear CZ kernel operator. We may then use \cite[Theorem 1.2]{DLMV1} to conclude that $O$ extends to a bounded bilinear operator
\[
L^{p_1}(\R; \mathcal X_1) \times
L^{p_2}(\R; \mathcal X_2) \to L^{p_3'}(\R; \mathcal X_3').
\]
As \eqref{e:cz1} continues to hold for $g_k\in (L^1(\R) \cap L^\infty (\R)) \otimes \mathcal X_k$, the vector-valued bound of the above display and duality complete the proof of the lemma.
\end{proof}
\subsection{Proof of Lemma \ref{l:treeest}, estimate \eqref{e:treepre5}} \label{ss:pflemmatree}
By the condition \eqref{e:typetree}, we may find a H\"older tuple $p_1,p_2,p_3$ with $q_{k}\leq p_k<\infty$.
 To apply Proposition \ref{p:psp} it is useful to keep in mind the equalities
\begin{equation}
\label{e:treepf}
   \int_{\R}   \prod_{k=1}^3 \tilde \chi_j  \tilde \pi_{j,k} f_k   = \int_\R \prod_{k=1}^3 \left( \sum_{P \in \T(j)} S_{P_k} f_k \right), \qquad j \in \mathbf{j}_\T
\end{equation}
having called $S_{P_k}\in \mathbb S_{P_k}$ the operator $f\mapsto \chi_{I_P} T_{m_{j,k}} f_k$.
We first handle the easy case where $B=\varnothing.$ Applying part b.\ of Proposition \ref{p:psp} to each $f_k$, for each $j \in \mathbf{j}_\T$, we have
\[
\sum_{j \in \mathbf{j}_\T} \eps_j   \int_{\R}   \prod_{k=1}^3 \tilde \chi_j  \tilde \pi_{j,k} f_k    =
\sum_{j \in \mathbf{j}_\T} \eps_j   \int_{\R}   \prod_{k=1}^3 S_j (\Pi_k f_k).
\]
For $k=1,2,3$ let $\sigma_k=\{\sigma_{j,k}:j\in \mathbf{j}_\T\}$ be a sequence of i.i.d.\ random variables which take the values $1, -1$ with equal probability. We denote the expectation with respect to $\sigma_k$ by $\mathbb E^k$. Using \cite[Lemma 4.1]{DLMV1}, H\"older's inequality, $L^{p_k}$-bounds for the $\mathcal X_k$-valued randomized square function (this holds since $\mathcal X_k$ is $\UMD$), and subsequently part a.\
of Proposition \ref{p:psp}, there holds
\[
\begin{split}
&\quad \left|\sum_{j \in \mathbf{j}_\T}  \eps_j\int_{\R}   \prod_{k=1}^3  S_j (\Pi_k f_k)  \right|\\
& \lesssim
\prod_{k=1}^3 \left( \mathbb{E}^k  \int_{\R} \left| \sum_{j \in \mathbf{j}_\T} \sigma_{j,k}   S_j (\Pi_k f_k) \right|_{\mathcal X_k}^{p_k} \right)^{\frac{1}{p_k}}
\lesssim \prod_{k=1}^3  \|\Pi_k f_k)\|_{L^{p_k}(\R; \mathcal X_k)} \lesssim   \prod_{k=1}^3\mathsf{eng}_k(f_k)(\mathsf T; q_k),
\end{split}
\]
which is the claim \eqref{e:treepre5}.

We turn to the harder case where $\#A=2$. By symmetry we may work with $B=\{1\} $. We use Proposition \ref{p:psp} to bound the left hand side of \eqref{e:treepre5} by $\mathsf{MAIN}+\mathsf{ERR}_1 + \mathsf{ERR}_2$ where
\begin{align}
\label{e:treepf1}
&\mathsf{MAIN}\coloneqq\left|
  \sum_{j \in \mathbf{j}_\T} \eps_j   \int_{\R}    \tilde \pi_{j,1}(\Pi_1 f_1)  \prod_{k=2}^3 \tilde \chi_j  \tilde \pi_{j,k} f_k \right|
  \\
  & \mathsf{ERR}_1\coloneqq
  \sum_{j \in \mathbf{j}_\T} \sum_{\ell(I)=2^{-jJ}}   \int_{I} |\tilde\chi_j  \tilde \pi_{j,1}(f_1-\Pi_1 f_1) |_{\mathcal{X}_1} \prod_{k=2}^3\left| \tilde \chi_j  \tilde \pi_{j,k} f_k \right|_{\mathcal{X}_k},
  \\
  & \mathsf{ERR}_2\coloneqq
  \sum_{j \in \mathbf{j}_\T} \sum_{\ell(I)=2^{-jJ}}   \int_{I} \zeta_j  |\tilde \pi_{j,1}(\Pi_1 f_1) |_{\mathcal{X}_1} \prod_{k=2}^3\left| (\tilde \chi_j)^{\frac16}  \tilde \pi_{j,k} f_k \right|_{\mathcal{X}_k}, \quad \zeta_j\coloneqq|(\tilde\chi_j^2-\tilde\chi_j^3)|(\tilde\chi_j)^{\frac12}
\label{e:treepf2}
\end{align}
the second  and third of which are error terms. In $\mathsf{ERR}_j$ the sum over $I$ is over all $J$-dyadic intervals of a fixed length $2^{-jJ}$.

We first handle the error terms: via H\"older's inequality with exponents $p_k$, and a combination of \eqref{e:pspc} for the $\mathcal{X}_1$ with Lemma \ref{l:sscale} for the $\mathcal X_k$ factors, $k=2,3$, we achieve the estimates
\[
 \mathsf{ERR}_1\lesssim \left( \prod_{k=1}^3\mathsf{eng}_k(f_k)(\mathsf T; q_k) \right)  \sum_{j \in \mathbf{j}_\T} \sum_{\ell(I)=2^{-jJ}} \int \chi_{I} \mu_j \lesssim \left(\prod_{k=1}^3\mathsf{eng}_k(f_k)(\mathsf T;q_k) \right) \sum_{j \in \mathbf{j}_\T}   \int \mu_j.
\]
As detailed in \cite[p.286, Lemma 4.12]{MTT2} the $j$-summation above is $\lesssim |I_\T|=1$, showing that $\mathsf{ERR}_1$ complies with the right hand side of \eqref{e:treepre5}. The second error term is bounded also  using H\"older, followed by the single scale estimates \eqref{e:pspc2} and Lemma \ref{l:sscale}:
\[ \mathsf{ERR}_2\lesssim \left( \prod_{k=1}^3\mathsf{eng}_k(f_k)(\mathsf T;q_k) \right)  \sum_{j \in \mathbf{j}_\T} \sum_{\ell(I)=2^{-jJ}} |I| \|\mathbf{1}_I\zeta_j\|_\infty.
\]
The $j,I$ summation above is also bounded by $|I_\T|$ via \cite[Lemma 4.8]{MTT2}. We omit the details.

We move to the main term. Using part b.\ of the Proposition, we recognize that
\[
\mathsf{MAIN} = \left| \sum_{j \in \mathbf{j}_\T} \eps_j \int (\tilde\pi_{j,1}\Pi_1 f_1) (S_{j}\Pi_2)   (S_{j}\Pi_3)    \right| \lesssim \prod_{j=1}^3 \|\Pi_k f_k\|_{L^{p_k}(\R; \mathcal X_k)} \lesssim\prod_{k=1}^3\mathsf{eng}_k(f_k)(\mathsf T;q_k)
\]
having used Lemma \ref{l:cz} for the first bound, and \eqref{e:pspa} for the second. This completes the proof of Lemma \ref{l:treeest}.
\section{Proof of Proposition \ref{p:psp}} \label{s:pfpsp}
In all cases below, the index $k\in \{1,2,3\}$ is fixed and we avoid mentioning it whenever possible. For instance, we write $q$ for $q_k$, ${\mathcal{X}}$ for ${\mathcal{X}_k}$ and $\alpha=1-\frac1q$.

\subsection{Proof of Proposition \ref{p:psp}, a.\ and b.\ parts:  lacunary case}
We recall that each $S_{P_k}f(x)= \chi_{P} T_{m_{P}} $ with $\chi_{P}\in X_{I_{P}}$ and $m_P \in M_{\omega_{P_k}}$. Preliminarily observe that due to the support conditions on $m_P$ and $\widehat{\chi_P}$,  the Fourier transforms of the functions
\begin{equation}
\label{e:ppdeflac1}
\Pi_{k,j}f=\sum_{P \in \T(j)} S_{P_k}f
\end{equation}
are supported in the disjoint intervals $\{\xi:2^{jJ-1}\leq |\xi| \leq 2^{jJ+1} \}$ where the symbol of $S_j$ is constant equal to one.
 In the lacunary case, the definition of $\Pi_k$ is then  very simple, namely referring to \eqref{e:ppdeflac1}
\begin{equation}
\label{e:ppdeflac2}
\Pi_k  \coloneqq \sum_{j\in \mathbf{J}_\T} \Pi_{k,j}
\end{equation}
and the equality in b.\ is immediate from the above considerations , while the estimate in a.\ for $p=q_k$ is immediate from the definition, as $\Pi_k$ is itself a tree operator.
 We now prove the estimate
\begin{equation}
\label{e:BMOlacbd}
\|\Pi_k f\|_{\mathrm{BMO}(\R; {\mathcal{X}_k})} \lesssim \mathsf{eng}_k(f)(\mathsf T; q_k)
\end{equation}
and a.\ for the other values of $p$ will follow by interpolation.

Fix a $J$-dyadic interval $I$. We first bound the contribution of the large scales: set $\T^+=\{P \in \T:\ell(I_P)>\ell(I)\}$. Then if $P \in \T^+$ with $\ell(I_P) =2^{v}\ell(I)$ and $\mathrm{dist}(I_P,I) \sim 2^{n} \ell (I_P)$, $v,n\in \mathbb N$, the Poincar\`e inequality yields
\[
\mathrm{osc}_I \left( S_{P_k}f  \right) \lesssim \frac{1}{|I|^{\frac1q} }\left\| \ell(I) \nabla S_{P_k}f \right\|_{L^{q}(I; {\mathcal{X}})} \leq 2^{-\alpha v}
\frac{1}{|I_P|^{\frac1q}}  \left\|\ell(I_P)  \nabla (S_{P_k}f) \right\|_{L^{q}(I; {\mathcal{X}})}.
\]
We write $\widetilde m_{P}(\xi)\coloneqq \ell(I_P) \xi m_{P}(\xi)$, so that  \[ \begin{split}
|\ell(I_P)\nabla( S_{P_k} f)|_{{\mathcal{X}}}&= |\chi_{P} T_{\widetilde m_{P}} f + (\ell(I_P) \nabla\chi_{I_P})   T_{ m_{P} } f|_{{\mathcal{X}}} \leq  \widetilde {\chi_{I_P}} \left(\left| \widetilde {\chi_{I_P}}   T_{ \widetilde m_{P}} f\right|_{{\mathcal{X}}} +  \left|\widetilde {\chi_{I_P}} T_{m_P} f\right|_{{\mathcal{X}}} \right)\\ & \coloneqq \widetilde {\chi_{I_P}}\left( |   S_{P_k,1} f|_{{\mathcal{X}}} + | S_{P_k,2} f|_{{\mathcal{X}}}\right)
\end{split}  \]  for a suitable choice of $\widetilde {\chi_{I_P}} \in {X_{I_P}}$ so that the domination of the last display holds.
Observe that with this choice $S_{P_k,u}, u=1,2$,    belong to the class $\mathbb S_{P_k}$ and are thus single scale tree operators, whence
\[
\frac{1}{|I_P|^{\frac1q}}  \left\| S_{P_k,u}\right\|_{L^{q}(\R; {\mathcal{X}})} \lesssim \mathsf{eng}_k(f)(\T; q).
\]
Using the bounds $\|\widetilde {\chi_{I_P}} \|_{L^\infty(I)} \lesssim 2^{-100n} $, we have proved that
\[
\mathrm{osc}_I \left( S_{P_k}f  \right) \lesssim 2^{-\alpha v-100n}\mathsf{eng}_k(f)(\T; q).
\]
which is summable over $P\in \T^+ $, that is over $v,n\in \mathbb N$ as claimed.

We move to handling the small scales, that is $\T^-=\{P \in \T:\ell(I_P)\leq\ell(I)\}$. We may partition $\T^-$ as the union of $\T^{-,0}=\{P \in \T^-:I_P\subset 3I\}$  and $\T^{-,n}=\{P \in \T^-:I_P\subset (2^{n+1}+1)I\setminus (2^{n}+1)I\}$ for $n\geq 1$.    We may choose $\widetilde{\chi_I} \in X_I$ so that the estimate
\begin{equation}
\label{e:BMOlacbd1}
\mathrm{osc}_I \left( \sum_{P \in \mathsf T^-}S_{P_k}f  \right) \leq \frac{1}{|I|^{\frac1q}} \sum_{n\geq 0}  \| g_n  \|_{L^q(\R; {\mathcal{X}})} \qquad g_n= \sum_{P \in \mathsf T^{-,n}}\widetilde{\chi_I} S_{P_k}f
\end{equation}
holds.  We now estimate each term appearing in the  last summation over $n$.  Fix $P\in \T^{-,n}$ for a moment and notice that  $\dist(I,I_P)\sim 2^{n} \ell(I)$ . Writing ${S_{P_k}}f = \chi_{I_P} T_{m_{P_k}} f$, define
\[
\zeta_{P}\coloneqq 2^{100n}\widetilde{\chi_I}\chi_{I_P}, \qquad  \widetilde{S_{P_k}}f = \zeta_P T_{m_{P_k}} f.
\]  We claim that the function $\zeta_{P} $ belongs to $X_{I_P}$. Indeed, the decay condition \eqref{e:frequencylocalized1} for $\zeta_P$ is easy to verify, with the additional $2^{100n}$ factor being allowed by virtue of  the previously observed separation between $I,I_P$.  The frequency support condition \eqref{e:frequencylocalized3} for $\zeta_P$ derives from the fact that the Fourier support of $ {\widetilde{\chi_I}}$ has an equal or smaller scale than the Fourier support  of $\widehat{\chi_{I_P}}$. Then we notice that $\mathsf T^{-,n}$ is a tree with top data $(I^n,0)$ and contained in $\mathsf T$, whence
\[
\| g_n  \|_{L^q(\R; {\mathcal{X}})} \leq 2^{-100n} \left\| \sum_{P \in \mathsf T^{-,n}}\widetilde{S_{P_k}}f  \right\|_{{L^q(\R; {\mathcal{X}})}} \leq
2^{-100n} |I^n|^{\frac1q} \mathsf{eng}_k(f)(\T; q) \leq 2^{-99n} |I|^{\frac1q} \mathsf{eng}_k(f)(\T; q)  \]
where the second bound holds because the operator inside the norm is a tree operator. Summation of the above bounds over $n$ yields the required control for the left hand side of \eqref{e:BMOlacbd1}. This estimate completes the proof of a.\ and b.\ parts of the Proposition.

\subsection{Proof of Proposition \ref{p:psp}, a.\ and c.\ part:  overlapping case}
We keep the convention of  writing $q$ for $q_{{\mathcal{X}_k}}$ and ${\mathcal{X}}$ for ${\mathcal{X}_k}$.

 Let $j \in \mathbf j_\T$. In this proof, we use the notation of \cite[Section 4]{MTT2} for\footnote{The collection $\mathbf{I}_\T$ is made of those $J$-dyadic intervals with the properties: 1) $3I$ does not contain any $I_P$ with $P\in \T$; 2) the $J$-dyadic parent of $I$ fails 1). Then $\tilde E_j=\cup\{I\in \mathbf{I}_\T : \ell(I) < 2^{-jJ}\}$. The set $\tilde E_j $ is a union of $J$-dyadic intervals of length $2^{-jJ}$, and obviously $E_{j+1}\subset E_j$. The collection of the connected components of $E_j$ is referred to as $\Omega_j$.} the sets $\tilde{E}_j$,  the collections $\Omega_j$, the intervals $I_j^\ell, I_j^r$, to which we send for a detailed  definition.
There is no loss in generality with assuming that $\inf \mathbf j_\T=0 $, this corresponds to the normalization $\ell(I_\T)=1$.
Define for $x\in \tilde E_0$, $j(x)=\max\{ j\in \mathbf{j}_\T: x \in \tilde E_j\}$. The scale $2^{-j(x)J}$ is the smallest spatial scale relevant for $x$. It is logical to choose $2^{j(x)J}$  as the frequency scale for the cutoff at $x$, motivating the definition of
\begin{equation}
\label{e:pitilde}
\widetilde{ \Pi_k }f\coloneqq \mathbf{1}_{\tilde E_0} T_{j(x)} f = \mathbf{1}_{\tilde E_0} T_{0} f +  \sum_{j\geq 1} \mathbf{1}_{\tilde E_j} S_j f
\end{equation}
where the nestedness of $\tilde E_j$ and telescoping have been used to get the second equality. The construction of the actual phase-space projection operator $\Pi_k$ is made by suitably modifying $\widetilde{\Pi_k} $ and begins now.

Fix a scale $j\in \mathbf{j}_\T$ and a connected component $I=[x_I^\ell,x_I^r]\in \Omega_j$. The perturbation of $g_j=\mathbf{1}_{I} S_j f $ is made by adding and subtracting    two auxiliary pieces at spatial scale $2^{-jJ}$ which kill the mean value of $g_j$: details follow.

 Recall from \cite[Lemma 4.12]{MTT2} that $I^{\ell}_j$ [resp.\ $I^r_j$] are intervals of length $2^{-2}\ell(I)$ whose right endpoint [resp.\ left endpoint] sits to the left of $x_I^\ell$ [resp.\ to the right of $x_I^r$] at a distance of $2^{-2}\ell(I)$. These intervals are   well separated, see \cite[Lemma 4.12]{MTT2} over $I \in \Omega_j, j \in \mathbf{j}_\T.$ Introduce  bump functions $\phi_{I,j}^\ell$ [resp.\ $ \phi_{I,j}^r $] adapted to $I_j^\ell$ [resp.\ adapted to $I_j^r$ ] with normalization
\[
\int \phi_{I,j}^\ell= \int \phi_{I,j}^r  = 2^{-Jj}.\]
Decomposing $\mathbf{1}_I(x) =H_I^\ell(x) + H_I^r(x)\coloneqq H(x-x_I^\ell) - H(x-x_I^r)$, where $H$ stands for Heaviside function. We introduce the $\mathcal X$-valued coefficients
\begin{equation}
\label{e:coeff}
c_{I,j}^\star\coloneqq 2^{Jj}\int H_I^\star S_j f, \qquad \star \in \{\ell, r\}.
\end{equation}
Before the next lemma,   by combining \cite[Lemmata 4.10, 4.11]{MTT2}, we realize that for $\star \in \{\ell, r\}$  there are tiles $P^\star \in \T$ with $\ell(I_{P^\star}) \sim 2^{-jJ}$ and $\dist(I_{P^\star},\{x_I^\star\})\lesssim 2^{-jJ}$.
\begin{lem}\label{l:coeff} Let  $\chi \in {X}_{I_{P^\star}}, $  $\star \in \{\ell, r\}.$ We have the estimate
\[
 |c_{I,j}^\star|_{\mathcal X} \lesssim \frac{1}{|I_{P^\star}|} \int |\chi (x)S_j f(x)|_{\mathcal X} \, \mathrm{d} x,
\]
and in particular\begin{equation}
\label{e:coeff2}
|c_{I,j}^\star|_{\mathcal X} \lesssim \mathsf{eng}_k(f)(\T; q).
\end{equation}
\begin{proof}The first inequality is proved in the same fashion as \cite[eq.\ (65)]{MTT2}. For the second, choose $\chi \in {X}_{I_{P^\star}}$ and note that $\chi^2    \in {X}_{I_{P^\star}}$ as well.  H\"older inequality yields
\[
\frac{1}{|I_{P^\star}|} |\chi^2(x)S_j f(x)|_{\mathcal X} \, \mathrm{d} x \leq \frac{1}{|I_{P^\star}|} \|\chi\|_{q'} \|\chi S_j\|_{L^q(\R; \mathcal{X})} \sim  \frac{1}{|I_{P^\star}|^{\frac1q}}   \|\chi S_j\|_{L^q(\R; \mathcal{X})}\lesssim \mathsf{eng}_k(f)(\T; q),
\]
where the last bound follows because $\chi S_j\in \mathbb S_{P^\star}$. This completes the proof.
\end{proof}
\end{lem}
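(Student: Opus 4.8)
The plan is to prove the first, pointwise-type estimate following the reasoning behind \cite[eq.\ (65)]{MTT2} and to then read off \eqref{e:coeff2} by a H\"older argument combined with the single-tile instance of the energy. The whole point of the first inequality is that $S_jf$ is frequency-localized to the annulus $\{2^{jJ}\leq |\xi|\leq 2^{(j+2)J}\}$, so that pairing it against the Heaviside function $H_I^\star$ appearing in \eqref{e:coeff} produces cancellation concentrated at the jump point $x_I^\star$.

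For the first inequality I would fix a Schwartz function $\psi$ with $\widehat\psi$ supported in $\{\tfrac12\leq |\zeta|\leq 2^{2J+1}\}$ and equal to one on $\{1\leq|\zeta|\leq 2^{2J}\}$, so that $\int\psi=\widehat\psi(0)=0$ and, setting $\psi_j=2^{jJ}\psi(2^{jJ}\cdot)$, one has $S_jf=S_jf*\psi_j$ by the support of the symbol of $S_j$. Starting from $c^\star_{I,j}=2^{jJ}\int H_I^\star\,S_jf$, where $H_I^\star$ is, up to sign, the indicator of a half-line with endpoint $x_I^\star$, I would move the convolution onto the Heaviside factor to get $c^\star_{I,j}=2^{jJ}\int (S_jf)\,\Theta^\star_{I,j}$ with $\Theta^\star_{I,j}=H_I^\star*\widetilde\psi_j$ and $\widetilde\psi_j(\cdot)=\psi_j(-\cdot)$. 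A direct computation shows $\Theta^\star_{I,j}(x)=G(2^{jJ}(x-x_I^\star))$ with $G(v)=\pm\int_{-v}^{\infty}\psi$; since $\int\psi=0$, the function $G$ decays rapidly, whence $|\Theta^\star_{I,j}(x)|\lesssim_N\langle 2^{jJ}(x-x_I^\star)\rangle^{-N}$ for every $N$. I would then invoke the structural lemmata recalled just before the statement (\cite[Lemmata 4.10, 4.11]{MTT2}): the tile $P^\star\in\T$ satisfies $\ell(I_{P^\star})\sim 2^{-jJ}$ and $\dist(I_{P^\star},\{x_I^\star\})\lesssim 2^{-jJ}$, so the weights $\langle 2^{jJ}(x-x_I^\star)\rangle$ and $\langle(x-c(I_{P^\star}))/\ell(I_{P^\star})\rangle$ are uniformly comparable. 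Using the pointwise \emph{lower} bound for $\chi\in X_{I_{P^\star}}$ in \eqref{e:frequencylocalized1} and choosing $N$ beyond the corresponding exponent gives $2^{jJ}|\Theta^\star_{I,j}(x)|\lesssim|I_{P^\star}|^{-1}\chi(x)$ pointwise, and therefore
\[
|c^\star_{I,j}|_{\mathcal X}\leq 2^{jJ}\int |S_jf(x)|_{\mathcal X}\,|\Theta^\star_{I,j}(x)|\,\mathrm{d} x\lesssim\frac{1}{|I_{P^\star}|}\int |\chi(x)S_jf(x)|_{\mathcal X}\,\mathrm{d} x.
\]

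To pass to \eqref{e:coeff2} I would apply the first inequality with $\chi^2$ in place of $\chi$ (licit since $\chi^2\in X_{I_{P^\star}}$), write $\chi^2|S_jf|_{\mathcal X}=\chi\cdot|\chi S_jf|_{\mathcal X}$, and use H\"older with exponents $q',q$ together with $\|\chi\|_{L^{q'}(\R)}\sim|I_{P^\star}|^{1/q'}$ to arrive at $|c^\star_{I,j}|_{\mathcal X}\lesssim|I_{P^\star}|^{-1/q}\|\chi S_jf\|_{L^q(\R;\mathcal X)}$. Finally I would note that $\chi S_j=\chi T_{\Psi_j}$ has exactly the shape entering the definition of $\mathbb S_{P^\star_k}$, with $\chi\in\Psi_{I_{P^\star}}$ and the symbol $\Psi_j$ of $S_j$ adapted to a moderate dilate of $\omega_{P^\star_k}$; hence, testing the energy against the singleton $k$-lacunary subtree $\{P^\star\}\subset\T$, whose top interval is $I_{P^\star}$, yields $|I_{P^\star}|^{-1/q}\|\chi S_jf\|_{L^q(\R;\mathcal X)}\leq\|f\|_{\{P^\star\},k,q}\leq\mathsf{eng}_k(f)(\T;q)$, which is \eqref{e:coeff2}.

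The delicate point is the first inequality: extracting from the pairing against $H_I^\star$ the cancellation that forces the rapid decay of $\Theta^\star_{I,j}$ about $x_I^\star$, and then aligning that localization with the tile $P^\star$ supplied by \cite{MTT2}. Once this is in place, the passage to \eqref{e:coeff2} is only H\"older bookkeeping, and the squaring step from $\chi$ to $\chi^2$ is precisely what converts a local $L^1$ average of $|\chi S_jf|_{\mathcal X}$ into the $L^q$-based energy. Note also that the vector-valued nature of $f$ plays no role beyond taking $|\cdot|_{\mathcal X}$ under the integral signs, so no $\UMD$ input is needed here.
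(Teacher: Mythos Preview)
Your proposal is correct and matches the paper's approach essentially verbatim: the paper defers the first inequality to \cite[eq.\ (65)]{MTT2} (which is precisely the ``convolve the Heaviside against a mean-zero bump and use the lower bound of $\chi$'' argument you spell out), and for \eqref{e:coeff2} it performs the same $\chi\mapsto\chi^2$ substitution followed by H\"older with exponents $q',q$ and the identification of $g\mapsto\chi S_j g$ as an element of $\mathbb S_{P^\star}$ to invoke the energy on the singleton tree $\{P^\star\}\subset\T$. Your added remark that no $\UMD$ input is needed here is also accurate.
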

With Lemma \ref{l:coeff} in hand, we are able to define the phase-space projection operator: with reference to \eqref{e:pitilde},
\begin{equation}
\label{e:pspover}
\Pi_k f \coloneqq \widetilde{ \Pi_k } f - \sum_{j \in \mathbf{j}_\T} \sum_{I \in \Omega_j} \sum_{\star \in \{\ell,r\}} c_{I,j}^\star \phi_{I,j}^\star.
\end{equation}
\subsubsection{Proof of Proposition \ref{p:psp}, part a. for $k \in B$} It suffices by interpolation to prove   estimate \eqref{e:pspa} for $p=q$ together with the endpoint
\begin{equation}
\label{e:pspaend}
\|\Pi_k f\|_{L^\infty(\R; \mathcal{X})} \lesssim \mathsf{eng}_k(f)(\T; q).
\end{equation}
\begin{proof}[Proof of \eqref{e:pspaend}] First of all, by virtue of the  separation properties of the  support of the $\phi_{I,j}^\star$ over $I \in \Omega_j, j \in \mathbf{j}_\T$ we have recalled earlier, and of the second bound in Lemma \ref{l:coeff},
\[
\left\|\sum_{j \in \mathbf{j}_\T} \sum_{I \in \Omega_j} \sum_{\star \in \{\ell,r\}} c_{I,j}^\star \phi_{I,j}^\star \right\|_{L^\infty(\R; \mathcal X)} \lesssim  \mathsf{eng}_k(f)(\T; q).
\]
Hence, it suffices to prove an $L^\infty $ bound on $\widetilde{\Pi_k}$. Fix $x\in \tilde E_0$ and set $j=j(x)$. By construction of $j(x)$ there is an interval $I'\subset \tilde E_j$ of length $2^{-jJ}$ containing $x$, and by construction of $\mathbf{I}_\T$ there is a tile $P\in \T$ with $\ell(I_P)\sim 2^{-jJ}$ and $I_P\subset 10 I'$.
Picking $\chi_{P} \in {X}_{I_P}$, writing $\alpha=\frac1q$ as before and using  Lemma \ref{l:bernstein} in the second inequality
\begin{equation}
\label{e:pspaneed}
|\widetilde{\Pi_k} f(x)|_{\mathcal X} \lesssim \chi_P |T_jf(x)|_{\mathcal X} \lesssim |I_P|^{-\alpha}\|\chi_P T_j f\|_{L^{q}(\R; \mathcal{X})} \lesssim \mathsf{eng}_k(f)(\T; q).
\end{equation}
Here the last bound comes from the fact that $g\mapsto \chi_P T_j g \in \mathbb S_P$ and $\{P\}\subset \T$ is a lacunary tree. This completes the proof of \eqref{e:pspaend}.
\end{proof}
\begin{proof}[Proof of \eqref{e:pspa} for $p=q$] First of all, using the disjointness of $I\in\Omega_j, j \in \mathbf j_\T$ we estimate the $L^q(\R; \mathcal X)$-norm of the part involving the $\phi_{I,j}^\star$ by
\[
\left(\sum_{j \in \mathbf j_\T} \#\Omega_j 2^{-jJ}  \right)^{\frac1q}
\Bigg(
\sup_{\star \in \{\ell, r \}}\sup_{\substack{j \in \mathbf j_\T\\ I \in \Omega_j \\ }} |c_{I,j}^\star|_{\mathcal X}\Bigg)\lesssim |I_\T|^{\frac1q} \mathsf{eng}_k(f)(\T; q).
\] where the first factor is bounded directly by \cite[Lemma 4.12]{MTT2} while the second is \eqref{e:coeff2} from  Lemma \ref{l:coeff}. We are then left with proving
\begin{equation}
\label{e:psparegular}
\|\widetilde{\Pi_k} f\|_{L^q(\R; \mathcal{X})}^q \lesssim |I_\T|\mathsf{eng}_k(T) (f;q)^q.
\end{equation}
To prove \eqref{e:psparegular} we recall that the sets $\tilde E_j$ are decreasing in $j$ and each is a union of disjoint  intervals $I\in \mathbf{I}_j$ with $\ell(I)=2^{-jJ}$ \cite[Lemma 4.10]{MTT2}. Thus, the sets $E_{I}=I \cap (\tilde E_j \setminus \tilde E_{j+1}), I \in \mathbf{I} $ are a disjoint cover of each $E_j\setminus E_{j+1}$, and the latter sets are also pairwise disjoint and cover the support of $\widetilde{\Pi_k} f$.
Furthermore,
\begin{equation}
\label{e:pspaboundI}
\sum_{j\in \mathbf{j}_\T} \sum_{I \in \mathbf{I}_j} |I| \lesssim |I_\T|,
\end{equation}
as $|E_I|\geq 2^{-J}|I|$. In modern terms, the  collection $\bigcup_j \mathbf{I}_j$ is $2^{-J}$ sparse.
Indeed, for each $I \in \mathbf{I}_j$ we may find $I'\subset \tilde E_j\setminus \tilde E_{j+1} $ with $I'\subset I$ and $\ell(I')=2^{-J}\ell(I)$; see \cite[Lemma 4.10]{MTT2}.
 As $\widetilde{\Pi_k} f(x) = T_j f$ for $x\in \tilde E_j \setminus \tilde E_{j+1} $,   the left hand side of \eqref{e:psparegular} is controlled by
\[
\sum_{j\in \mathbf{j}_\T} \sum_{I \in \mathbf{I}_j} \|\mathbf{1}_{E_I}T_j f\|_{L^q(\R; \mathcal{X})}^q \lesssim  \sum_{j\in \mathbf{j}_\T} \sum_{I \in \mathbf{I}_j} \|\chi_{I}T_j f\|_{L^q(\R; \mathcal{X})}^q, \qquad \chi_I \in X_I.
\]
By virtue of the last display and of \eqref{e:pspaboundI}, it suffices to show that
\[
\|\chi_{I}T_j f\|_{L^q(\R; \mathcal{X})}^q\lesssim |I| \mathsf{eng}_k(f)(\T; q)^q, \qquad \forall j\in \mathbf{j}_\T, \,I \in \mathbf{I}_j.
\]
Fix such $j, I$.
We now appeal to \cite[Lemma 4.11]{MTT2} to find $P \in \T$ with $I_P \subset 10I$ and $\ell(I_P) \lesssim 2^{-jJ}$. As $\chi_I\in \widetilde{X_{I_P}}$, $f\mapsto \chi_{I}T_j f $ belongs to $\mathbb S_P$ and thus is a tree operator for $\{P\} \subset \T$, and the last display follows, completing the proof of \eqref{e:pspa}.
\end{proof}
\subsubsection{Proof of Proposition \ref{p:psp}, part c.} We begin  the proof by using the   single scale estimate of Lemma \ref{l:sscale}.
In fact \eqref{e:pspc2} follows immediately from \eqref{e:pspc}, \eqref{e:pspc2a} and the fact that $\mu_j$ is uniformly bounded. So it remains to prove \eqref{e:pspc}. As usual, we prove the extremal cases. In fact, it suffices to prove the case  $p=q$, as the case $p=\infty$ may then be recovered from Lemma \ref{l:bernstein}.
\begin{proof}[Proof of \eqref{e:pspc} for $p=q$ ]
In the proofs that follow, we use the local notation
\begin{equation}
\label{e:localnot}
S_{P_k} g = \chi_P T_{m_{P_k}} g, \qquad
O g= \sum_{  P \in \T(j_0)  } S_{P_k} g,
\end{equation}
where $\chi_P \in X_{I_P}= X_{I_P}(2N,\delta,C,c)$ and $m_{P_k}\in M_{\omega_{P_k}}$.
Notice that $O$ is a  tree operator and as thus is bounded on  $L^{q}(\R; \mathcal{X}_k) $, but it is also pointwise bounded by maximal averages and thus  bounded on $L^{\infty}(\R; \mathcal{X}_k) $.

%Either $I_0 \subset \tilde E_{j_0},$  which is the difficult case, or $I_0 \cap \tilde E_{j_0}=\varnothing$. The latter easy case is only sketched in \cite{MTT2}: we provide some  extra details.  Taking $\chi_{I_0}\in {X}_{I_0}$, so that $\chi^2_{I_0}\in {X}_{I_0}$ as well, and later applying \eqref{e:pspc2a} and the decay of $\chi_{I_0}$, we obtain
%\begin{equation}
%\label{e:pspaeasy}
%\left\| \mathbf{1}_{I_0}  O f\right\|_{L^{q}(\R; {\mathcal{X}_k})} \lesssim
%\left|I_{0}\right|^{\frac1q}\left\| \chi^2_{I_0}  O f\right\|_{L^{\infty}(\R; {\mathcal{X}_k})}
%\lesssim \left\langle  \frac{\dist(I_0, \partial \tilde E_{j_0})}{ \ell(I_0)}\right\rangle^{-100} |I _0|^{\frac{1}{q}}  \mathsf{eng}_k(f)(\mathsf T; q).
%\end{equation}
%We also have, with  the same $\chi_{I_0}$ as above
%\begin{equation}
%\label{e:pspadiffi}
%\begin{split}& \quad
%\left\| \mathbf{1}_{I_0} O( \Pi_k f)\right\|_{L^{q}(\R; {\mathcal{X}_k})} \lesssim |I_0|^{\frac1q}
%\left\| \chi_{I_0}   \Pi_k f\right\|_{L^{\infty}(\R; {\mathcal{X}_k})} \\ & \leq   |I _0|^{\frac{1}{q}}  \mathsf{eng}_k(f)(\mathsf T; q)
%\sum_{ j \leq j_0} 2^{-J|j-j_0|} \left\langle \frac{\dist(I_0, \partial \tilde E_{j})}{ \ell(I_0)}\right\rangle^{-100}
%\end{split}
%\end{equation}
%In the above estimate, we have employed the $\infty$ case of \eqref{e:pspa}, the support condition of each $j$-scale of $  \Pi_k f$ and the decay of $\chi_{I_0}$.
%The proof of the easy case is thus complete, as \eqref{e:pspaeasy}, \eqref{e:pspadiffi} are controlled by the right hand side of \eqref{e:pspc}.

The first step in the proof proper is to notice that \[
O ( f -\Pi_k f)= O( T_{j_0} f -\Pi_k f),\] leading to the  key is the decomposition
\begin{align}
\nonumber  T_{j_0} f -\Pi_k f&=
\\  &\quad  \mathbf{1}_{\R  \setminus \tilde E_{j_0}}  T_{j_0} f  \label{e:pspdec1}
\\ & -\mathbf{1}_{\R  \setminus \tilde E_{j_0}} \widetilde{\Pi_k } f  \label{e:pspdec2}
\\ & + \sum_{\star \in \{\ell,r\}} \mathbf{1}_{\R  \setminus \tilde E_{j_0}} \sum_{j \leq j_0} \sum_{I\in \Omega_j} c_{I,j}^\star \phi_{I,j}^\star \label{e:pspdec3} \\ &
-\sum_{\star \in \{\ell,r\}}  \sum_{j>j_0} \sum_{I\in \Omega_j} (H_I^\star S_j f- c_{I,j}^\star \phi_{I,j}^\star). \label{e:pspdec4}
\end{align}
cf.\ \cite[eqs. (77)-(82)]{MTT2}. We now have to estimate the four contributions separately, and, as in \cite{MTT2}, distinguish the local case $5I_0 \cap  \tilde E_{j_0} \neq \emptyset $ from the complementary nonlocal case: for clarity, we first present the local case, and at the end of the proof we elaborate on the sketch provided in \cite[p.\ 295]{MTT2} and unify the two cases: see Remark \ref{rem:nonl} below.

We first estimate the contribution of
$g=\eqref{e:pspdec1} - \eqref{e:pspdec2} + \eqref{e:pspdec3}$.
Using the decay at scale $\ell(I_0)$ of the kernel of $O$ together with  the $L^\infty$ bounds \eqref{e:pspaneed}, \eqref{e:pspaend}, \eqref{e:coeff2}
\begin{equation}
\label{e:pspdec5}
\|\mathbf{1}_{I_0}O (\mathbf{1}_{\R\setminus 3I_0}g)\|_{L^{q}(\R; {\mathcal{X}_k})} \lesssim |I_0|^{\frac1q} \sum_{ j \leq j_0} 2^{-J|j-j_0|} \left\langle \frac{\dist(I_0, \partial \tilde E_{j})}{ \ell(I_0)}\right\rangle^{-100}  \mathsf{eng}_k(f)(\mathsf T;q)
\end{equation}
which is acceptable for \eqref{e:pspc}. Further, if $\mathbf{1}_{ 3I_0}g$ is nonzero, then $I_0$ is close to the boundary of $\tilde E_{j_0}$. In this case the right hand side of \eqref{e:pspc} is $O(1)$ and we may just aim for the estimate
\begin{equation}
\label{e:forg1}
  \|O (\mathbf{1}_{ 3I_0}g)\|_{L^{q}(\R; {\mathcal{X}_k})}    \lesssim |I_0|^{\frac1q} \|O\|_{L^{q}(\R; {\mathcal{X}_k})} \mathsf{eng}_k(f)(\mathsf T;q).
\end{equation}
Although the $O$-norm appearing here is $O(1)$, we choose to keep this constant in evidence for later use.

 We begin the proof of \eqref{e:forg1}. We argue   separately for each summand of $g$.
First of all, we bound the contribution of \eqref{e:pspdec1}. Appealing to \cite[Lemma 4.11]{MTT2}, we learn that there exists $P \in \mathsf{T}(j_0)$ such that $\dist(I_0,I_P) \sim 1$, so that for suitable choice of $\chi_{I_0}\in X_{I_0}$,
\begin{equation}
\label{e:Zproof1}
\|\mathbf{1}_{3I_0}  T_{j_0} f\|_{L^q(\R; \mathcal X_k)}   \leq
\|\chi_{I_0}  T_{j_0} f\|_{L^q(\R; \mathcal X_k)}  \lesssim  |I_0|^{\frac1q} \mathsf{eng}_k(f)(\mathsf T;q).
\end{equation}
This makes the contribution of \eqref{e:pspdec1} acceptable for \eqref{e:forg1}. To control the contribution of \eqref{e:pspdec2} we note that  $(\mathbb R\setminus\tilde E_{j_0}) \cap 3I_0$ is the union of at most three intervals $I_1 $ of length $\ell(I_0)$, on which $\widetilde {\Pi_k} f$ coincides with $T_{j_0-1} f$. On each of these intervals, by the same argument used for \eqref{e:Zproof1},
\begin{equation}
\label{e:Zproof2}
\|\mathbf{1}_{I_1}  T_{j_0-1} f \|_{L^q(\R; \mathcal X_k)}   \lesssim
 |I_0|^{\frac1q} \mathsf{eng}_k(f)(\mathsf T;q)
 \end{equation} which is acceptable. Finally,
 from  the last claim of \cite[Lemma 4.12]{MTT2} we gather that  $I_{j}^\star \cap 3_{I_0}\neq \varnothing $ for at most $O(1)$ intervals $I\in \Omega_j$ with $j\leq j_0$. Therefore
\[
\|\mathbf{1}_{ 3I_0}\eqref{e:pspdec3}\|_{L^{q}(\R; {\mathcal{X}_k})} \lesssim |I_0|^{\frac1q} \sup_{I,j,\star} |c_{I,j}| \lesssim |I_0|^{\frac1q} \mathsf{eng}_k(f)(\mathsf T;q)
\]
by \eqref{e:coeff2}, and we have proved \eqref{e:forg1}. This finishes the control of terms \eqref{e:pspdec1} to \eqref{e:pspdec3}.

To complete the proof of \eqref{e:pspc}, we are left with estimating the small spatial scales term \eqref{e:pspdec4}. Using the triangle inequality and the definition of $\mu_{j_0}$, it will suffice to prove that for each fixed $\star\in \{\ell, r\}, j>j_0, I\in \Omega_j$ there holds
\begin{equation}
\label{e:pspc40}
\begin{split} &
\|\mathbf{1}_{ I_0}O(G_{I}) \|_{L^{q}(\R; {\mathcal{X}_k})} \lesssim |I_0|^{\frac1q} \mathsf{eng}_k(f)(\T; q) \int \frac{\chi_{I_0}(x)}{|I_0|} 2^{-\frac{(j-j_0)}{100} }
\langle 2^{jJ}|x-x^\star_I|\rangle^{-100} \, \mathrm{d} x,
\\
& G_{I}\coloneqq H_I^\star S_j f- c_{I,j}^\star \phi_{I,j}^\star.
\end{split}
\end{equation}
As they will be kept fixed below, we have omitted $\star $ and $j$ from the $G_I$ notation for simplicity.
Let $n\in \mathbb N$ be the least integer such that $2^{n}I_0\cap I_{j}^\star \neq \varnothing $.
A direct  computation of the right hand side and the fact that $\chi_{I_0}\in X_{I_0} $ tells us that the above bound is equivalent to the estimate
\begin{equation}
\label{e:pspc4}
\|\mathbf{1}_{ I_0}O(G_{I}) \|_{L^{q}(\R; {\mathcal{X}_k})} \lesssim |I_0|^{\frac1q} 2^{-\frac{(j-j_0)}{100} } 2^{-jJ} 2^{-100n}.
 \end{equation}

 The final stretch of the proof will be to establish \eqref{e:pspc4}.
As the frequency support of $O$ is localized near $2^{j_0}$, we gather that $O[(T_{j-1} H_I^{\star})(S_jf)]=0$. This means we may replace $G_I $ by
\[
F_I=G_I-(T_{j-1} H_I^{\star})(S_jf) = [(1-T_{j-1})H_I^{\star}]S_j f - c_{I,j}^\star \phi_{I,j}^\star.
\]As both $G_I$ and $F_I-G_I$ have mean zero, $F_I $ also does. Letting $\Phi_I$ be the antiderivative of $F_I$ which vanishes at $\pm\infty$, we have
\[
OF_I (x) = 2^{Jj_0} \widetilde O \Phi(x),
\]
where $\widetilde O$ is the pseudodifferential operator
\[
\widetilde O g(x) = \sum_{  P \in \T(j_0) }  \int \chi_{I_P}(x) 2^{-Jj_0}\xi m_{P_k}(\xi) \widehat g(\xi) \e^{ix\xi }\, \mathrm{d} \xi.
\]
In fact $\widetilde O$ is a tree operator as $\xi \mapsto 2^{-Jj_0}\xi m_{P_k}(\xi) $ belongs to $M_{\omega_{P_k}}$. Therefore, we begin to bound \eqref{e:pspc2} by
\begin{equation}
\label{e:pspc5}
\|\mathbf{1}_{ I_0}OG_{I}) \|_{L^{q}(\R; {\mathcal{X}_k})} \lesssim \|\chi_{I_0}\Phi_I  \|_{{L^{q}(\R; {\mathcal{X}_k})}}.
\end{equation}
An estimate on  $|\Phi_I(x) |_{\mathcal X} $ compatible with the right hand side of \eqref{e:pspc4} may be produced, cf. \cite[p. 298]{MTT2}, once  we establish the pointwise bound
\begin{equation}
\label{e:pspc6}
|F_I(x)|_{\mathcal X} \lesssim  \mathsf{eng}_k(f)(\T; q)  \langle 2^{jJ}|x-x^\star_I|\rangle^{-100}.
\end{equation}
The last step towards  \eqref{e:pspc4}, and therefore \eqref{e:pspc}, is to prove \eqref{e:pspc6}. The contribution of $ c_{I,j}^\star \phi_{I,j}^\star$ is controlled by virtue of the decay of $\phi_{I,j}^\star$ and \eqref{e:coeff2}. We turn to controlling the summand $[(1-T_{j-1})H_I^{\star}]S_j f $. First we recall that by construction of $I\in \Omega_j$ and $I_{j}^\star,$ we may find a dyadic interval $I'$ with $\ell(I')=2^{-jJ} $ adjacent to the left endpoint of $I$, in particular $\dist(x_I^\ell, I')\sim 2^{-j} $, and    $P\in \T$ such that $I_P \subset I'$. Pick $\chi' \in {X}_I$ with decay parameter $N$ (for instance). As $g\mapsto \chi_{I'}S_j g\in \mathbb S_P$ and $\{P\}\subset \T$, we have the estimate
\[
\|\chi_{I'} S_j f\|_{L^\infty(\R;\mathcal X)} \leq \frac{1}{|I'|^{\frac1q}} \|\chi_{I'} S_j f\|_{L^q(\R;\mathcal X)} \lesssim \mathsf{eng}_k(f)(\T; q)
\]
where the first inequality is Lemma \ref{l:bernstein}. As $\dist(x_I^\ell, I')\sim 2^{-j} $, we have that $\chi_{I'}(x) \gtrsim \langle 2^{jJ}|x-x_{I_j^\star}|\rangle^{-N}$. Putting these estimates together,
\[
|S_j f(x) | \lesssim \langle 2^{jJ}|x-x_{I_j^\star}|\rangle^N\mathsf{eng}_k(f)(\T; q).
\]
Integrating repeatedly by parts the high frequency  function $[(1-T_{j-1})H_I^{\star}]$, we may bound it pointwise by factors of $\lesssim_N \langle2^{jJ}|x-x^\star_I|\rangle^{-N-100}$, compensating the polynomial growth of the last display and yielding an acceptable right hand side for \eqref{e:pspc6} which is finally proved. The proof of   \eqref{e:pspc} is finally complete.
\end{proof}
\begin{rem}[The nonlocal case of \eqref{e:pspc}]  \label{rem:nonl}The local/nonlocal cases can be unified by introduction of the parameter
\[
Z= \textrm{least nonnegative integer such that } I_0 \pm Z \ell(I_0) \cap  \tilde E_{j_0}\neq\emptyset.
\]
Comparing with what we did to obtain \eqref{e:Zproof1}, and to \cite[Lemma 4.11]{MTT2}, we learn that there exists $P \in \mathsf{T}(j_0)$ such that $\dist(I_0,I_P) \sim Z$, so that
\begin{equation}
\label{e:Zproof3}
\mathsf{eng}_k(f)(\mathbb P(I_0);q) \lesssim Z^{N}, \qquad
\mathbb P(I_0)=\{P \textrm{ is any tri-tile with } I_P=I_0\}.
\end{equation}This introduces a $Z^N$ loss in e.\ g.\ estimates \eqref{e:Zproof1}, \eqref{e:Zproof2}.
However, as we are concerned with estimates for $1_{I_0}O(T_{j_0} f -\Pi_k f)$, we may replace $O$ by the operator $g\mapsto \tilde O g= \chi_{I_0} Og $, where $\chi_{I_0}\in X_{I_0}(2N,\delta, C,c)$ and  $\chi_{I_0}\geq \mathbf{1}_{I_0}$. The separation between $\tilde E_{j_0}$ and $I_0$ yields that
\begin{equation}
\label{e:Zproofk}
\|\tilde O \|_{L^q(\R; \mathcal X_k)}   \lesssim Z^{-2N},
\end{equation}
and the same additional decay factor is gained in the kernel estimates for $\tilde O$.
Replacing  $O$ by $\tilde O$ in \eqref{e:pspdec5}, \eqref{e:forg1} and taking \eqref{e:Zproofk} into account offsets the loss introduced by \eqref{e:Zproof3}.
\end{rem}
%\section{Proof of Lemma \ref{l:engbd}} \label{s:energybdpf}

\section{Proof of Lemma \ref{l:energy}} \label{s:energyproof}
We begin with a definition. We say that a family of trees $\T\in \mathbf{T}$ is $k$-strongly disjoint with parameter $\beta\geq 1$ if
\begin{itemize}
\item[i.] each $\T$ is a $k$-lacunary tree;
\item[ii.] if  $ \T,\T'\in\mathbf{T},$ $\T\neq\T'$,  then \[
P\in \T,\,P'\in \T', \, \ell(\omega_{P}) \leq \ell(\omega_{P'}), \, 10\beta\omega_{P_k}\cap10\beta \omega_{P'_k} \neq \varnothing \implies I_{P'} \cap I_{\T} =\varnothing.  \]
\end{itemize} The rationale behind this definition is that, if the consequence of the above implication failed, the tri-tile $P'$ would qualify to be in a suitable completion of the tree $\T$.
In what follows, we work with the parameter $\beta=1$, as the general case   $1\leq \beta \ll 2^{J}$  may be handled by finite splitting.
\subsection{The $L^2$-orthogonality estimates}
Tree operators associated to families of $k$-strongly disjoint trees give rise to an $L^2$ almost orthogonality estimate: this is well known, and extends to the case of Hilbert space valued functions, as detailed in the next lemma. This lemma is a transposition of \cite[Proposition 6.1]{HLC} to our context. It is convenient in what follows to introduce the single tile  version of the energy parameters. To do so, for each tri-tile $P$ we introduce the function
\begin{equation} \label{e:up2}
\begin{split}
u_{P}\coloneqq \left\langle \frac{|x-c(I_P)|}{\ell(I_P)}\right\rangle^{-10}\end{split}
\end{equation}
and define
\begin{equation}
\label{e:sscaleen}
\begin{split}& \|f\|_{P,k,q}= \sup_{m_{P_k} \in M_{P_k}}  {|I_{\mathsf{P}}|^{-\frac1q}} \left\| u_P T_{m_{P_k}} f \right\|_{L^q(\R; \mathcal{X})},
\\
&
\mathsf{eng}_{k,ss}(f) (\mathbb P;q) \coloneqq \sup_{P \in \mathbb P} \|f\|_{P,k,q}.\end{split}
\end{equation}
For uniformity, we gave the definitions above for a generic $1\leq q \leq \infty$. However, Lemma \ref{l:bernstein} shows the upper bound  $ \|f\|_{P,k,\infty}\lesssim \|f\|_{P,k,1}$, and it follows that $\|f\|_{P,k,p} \sim_{p,q} \|f\|_{P,k,q} $ for all $1\leq p ,q\leq \infty$. Below, we will only use the value $q=2$ in \eqref{e:sscaleen}.
Note the trivial bounds
\begin{align}
\label{e:sscaleentv}
 & \|f\|_{P,k,q}\lesssim |I_{P}|^{-\frac1q}  \|u_{P} \|_q  \sup_{m_{P_k} \in M_{P_k}} \| T_{m_{P_k}} f \|_{L^\infty(\R; \mathcal X)} \lesssim \|f\|_{L^\infty(\R; \mathcal X)}, \\ &
\label{e:sscaleentv2}
\sup_{S_{P_k} \in \mathbb S_{P_k}}  \|S_{P_k} f\|_{L^q(\R; \mathcal{X})}\lesssim |I_P|^{\frac1q}\|f\|_{P,k,q}. \end{align}
    %When  $q$ in \eqref{e:sscaleen}, we simply write $\Delta_k(f)(P), \mathsf{eng}_{k,ss}(f) (\mathbb P)$ instead.
\begin{lem} \label{l:ortho2} Let $\mathcal X$ be a Hilbert space and $\mathbf T$ be a collection of $k$-strongly disjoint trees, and define $\mathbb{T}= \bigcup\{\T: \T \in \mathbf{T}\}$.
 There holds
\begin{equation}
\label{e:orthohilb}
\left\|\sqrt{|I_\T|}\mathsf{eng}_{k}(f)(\mathsf T; 2) \right\|_{\ell^2(\T\in \mathbf T)} \lesssim \|f\|_{L^2(\R; \mathcal X)} +
\left( \mathsf{eng}_{k,ss}(f) (\mathbb{T} ;2) \left[\sum_{\T \in \mathbf T}|I_\T|\right]^{\frac12} \right)^{\frac13} \|f\|_{L^2(\R; \mathcal X)}^{\frac23}.
\end{equation}
\end{lem}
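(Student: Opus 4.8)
The plan is to run the classical $L^{2}$ almost-orthogonality argument for strongly disjoint families of trees, in the form of \cite[Sec.\ 6]{HLC}, exploiting that $\mathcal X$ --- being a Hilbert space --- makes $L^{2}(\R;\mathcal X)$ into a genuine Hilbert space, so that a $TT^{*}$ argument is available. Unfolding \eqref{e:engdef} and \eqref{e:size} (and, as in \cite{HLC}, removing the supremum over subtrees by a harmless $O(1)$-fold splitting of $\mathbf T$ into $k$-strongly disjoint subfamilies), to each $\T\in\mathbf T$ we attach a type-$k$ tree operator $\Phi_{\T}=\sum_{P}S_{P_{k}}$, $S_{P_{k}}\in\mathbb S_{P_{k}}$, realizing the energy up to a constant, so that $|I_{\T}|\,\mathsf{eng}_{k}(f)(\T;2)^{2}\lesssim\|\Phi_{\T}f\|_{L^{2}(\R;\mathcal X)}^{2}$; moreover the family of underlying witnessing subtrees is again $k$-strongly disjoint. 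It therefore suffices to bound $X\coloneqq\sum_{\T\in\mathbf T}\|\Phi_{\T}f\|_{L^{2}(\R;\mathcal X)}^{2}$ by the square of the right-hand side of \eqref{e:orthohilb}. We record that each $\Phi_{\T}$ is a uniformly $L^{2}$-bounded Calder\'on--Zygmund operator (as noted after \eqref{e:treeop}), whence $\|\Phi_{\T}^{*}\|_{L^{2}\to L^{2}}=\|\Phi_{\T}\|_{L^{2}\to L^{2}}=O(1)$, and we abbreviate $\calE\coloneqq\mathsf{eng}_{k,ss}(f)(\mathbb T;2)$, $\Sigma\coloneqq\sum_{\T\in\mathbf T}|I_{\T}|$.

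The $TT^{*}$ step reads
\[
X=\Bigl\langle f,\sum_{\T\in\mathbf T}\Phi_{\T}^{*}\Phi_{\T}f\Bigr\rangle_{L^{2}(\R;\mathcal X)}\le\|f\|_{L^{2}(\R;\mathcal X)}\,Y,\qquad Y\coloneqq\Bigl\|\sum_{\T\in\mathbf T}\Phi_{\T}^{*}\Phi_{\T}f\Bigr\|_{L^{2}(\R;\mathcal X)}.
\]
Expanding the square, $Y^{2}=\sum_{\T,\T'}\langle\Phi_{\T'}\Phi_{\T}^{*}\Phi_{\T}f,\Phi_{\T'}f\rangle$; the diagonal $\T=\T'$ contributes $\sum_{\T}\|\Phi_{\T}^{*}\Phi_{\T}f\|_{L^{2}(\R;\mathcal X)}^{2}\lesssim\sum_{\T}\|\Phi_{\T}f\|_{L^{2}(\R;\mathcal X)}^{2}=X$, using the uniform bound on $\|\Phi_{\T}^{*}\|$.

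For the off-diagonal $\T\neq\T'$ the key observation is that $\Phi_{\T'}=\sum_{P'}S_{P'_{k}}$ interacts with a summand coming from a tile $P$ of a distinct tree $\T$ only when the (moderately dilated) intervals $\omega_{P_{k}},\omega_{P'_{k}}$ meet, and then the $k$-strong disjointness of $\mathbf T$ forces the tile of larger frequency scale to be spatially disjoint from the top interval of the other tree; consequently the rapid spatial decay \eqref{e:singlescaleM} of single-scale operators produces a geometric gain in the scale difference and in the spatial separation. Estimating each surviving amplitude by the single-tile energy via \eqref{e:sscaleen}--\eqref{e:sscaleentv2}, $\|S_{P_{k}}^{*}S_{P_{k}}f\|_{L^{2}(\R;\mathcal X)}\lesssim\|S_{P_{k}}f\|_{L^{2}(\R;\mathcal X)}\lesssim|I_{P}|^{\frac12}\calE$, and summing the resulting sparse geometric series, one gets $\bigl\|\Phi_{\T'}\bigl(\sum_{\T\neq\T'}\Phi_{\T}^{*}\Phi_{\T}f\bigr)\bigr\|_{L^{2}(\R;\mathcal X)}\lesssim\calE\,|I_{\T'}|^{\frac12}$. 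Cauchy--Schwarz in $\T'$ then bounds the off-diagonal part by $X^{\frac12}(\calE^{2}\Sigma)^{\frac12}=\calE\,\Sigma^{\frac12}X^{\frac12}$. Altogether $Y^{2}\lesssim X+\calE\,\Sigma^{\frac12}X^{\frac12}$, hence
\[
X\le\|f\|_{L^{2}(\R;\mathcal X)}\,Y\lesssim\|f\|_{L^{2}(\R;\mathcal X)}\,X^{\frac12}+\|f\|_{L^{2}(\R;\mathcal X)}\,(\calE\,\Sigma^{\frac12})^{\frac12}X^{\frac14},
\]
and an elementary case analysis on this self-improving inequality (comparing $X^{\frac12}$ with $\|f\|_{L^{2}(\R;\mathcal X)}$, respectively with $\|f\|_{L^{2}(\R;\mathcal X)}^{\frac43}(\calE\,\Sigma^{\frac12})^{\frac23}$) yields $\sqrt X\lesssim\|f\|_{L^{2}(\R;\mathcal X)}+(\calE\,\Sigma^{\frac12})^{\frac13}\|f\|_{L^{2}(\R;\mathcal X)}^{\frac23}$, which is \eqref{e:orthohilb}.

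The hard part is the off-diagonal estimate: extracting from $k$-strong disjointness the geometric decay uniformly over all frequency-overlapping pairs of tiles belonging to distinct trees, and summing the several resulting geometric parameters against $\calE$ so as to relocalize everything to $I_{\T'}$ with precisely the factor $|I_{\T'}|^{\frac12}$; one must moreover absorb into these tails the within-tree cross terms $S_{P_{k}}^{*}S_{P''_{k}}$, $P\neq P''$ in the same tree, using the disjointness of the frequency intervals across scales together with the spatial separation of the $I_{P}$ at a fixed scale. The confinement to Hilbert $\mathcal X$ is essential here, since the $TT^{*}$ step has no counterpart over a general Banach space; this is exactly the place where, downstream, the $q_{\mathcal X}$-intermediate (``interpolate with a Hilbert space'') hypothesis will be brought to bear, namely in Lemma \ref{l:energy}.
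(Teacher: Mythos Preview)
Your high-level strategy is exactly the paper's: a $TT^{*}$ almost-orthogonality argument culminating in the self-improving inequality $S^{4}\lesssim S^{2}+\calE\,\Sigma^{1/2}S$ (which is equivalent to your $Y^{2}\lesssim X+\calE\,\Sigma^{1/2}X^{1/2}$ together with $X\le\|f\|_{2}Y$), and the endgame algebra you sketch is correct.

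The organization, however, differs in a meaningful way. The paper does \emph{not} work with the tree operators $\Phi_{\T}^{*}\Phi_{\T}f$. Instead it first invokes the auxiliary Lemma~\ref{l:ortho0} to dominate $|I_{\T}|\,\mathsf{eng}_{k}(f)(\T;2)^{2}$ by $\sum_{P\in\T}|I_{P}|\,\|f\|_{P,k,2}^{2}$, thereby reducing everything to the single-tile quantity $S^{2}=\sum_{P\in\mathbb T}|I_{P}|\,\|f\|_{P,k,2}^{2}$. After linearizing each supremum in $\|f\|_{P,k,2}$, one obtains $S^{2}\sim\langle\sum_{P}\|f\|_{P,k,2}\,T_{P}^{*}v_{P},\,f\rangle$ with fixed wave-packet-like functions $v_{P}$ obeying $|v_{P}|_{\mathcal X}\lesssim u_{P}$. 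The diagonal/off-diagonal split is then at the \emph{tile} level by frequency nesting ($\omega_{P_{k}}=\omega_{P'_{k}}$ versus $\omega_{P_{k}}\subsetneq\omega_{P'_{k}}$), not by tree membership. The cross terms are explicit inner products $\langle T_{P}^{*}v_{P},T_{P'}^{*}v_{P'}\rangle$ with the concrete decay bound $\lesssim\|\mathbf 1_{I_{P'}}u_{P}\|_{1}$; strong disjointness enters as pairwise disjointness of the $I_{P'}$ with $P'\in\mathbb T_{<}(P)$ together with $I_{P'}\cap I_{\T(P)}=\varnothing$, and one finishes by Cauchy--Schwarz against $S$ and the Carleson-type sum $\sum_{\T}\sum_{P\in\T}\|\mathbf 1_{\R\setminus I_{\T}}u_{P}\|_{1}\lesssim\Sigma$.

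The concern with your route is the asserted off-diagonal bound $\|\Phi_{\T'}(\sum_{\T\neq\T'}\Phi_{\T}^{*}\Phi_{\T}f)\|_{L^{2}}\lesssim\calE\,|I_{\T'}|^{1/2}$. You correctly flag this as the hard part, but no argument is given, and the claim as stated is stronger than what the paper proves: it asks for a bound depending on $f$ only through $\calE$, uniformly over the \emph{sum} of all other trees, localized to a single $|I_{\T'}|^{1/2}$. In the paper's scheme one does not obtain any such per-$\T'$ bound; instead one keeps the scalar coefficients $\|f\|_{P,k,2}$ on one side and feeds them back into $S$ via Cauchy--Schwarz, producing $\calE\,\Sigma^{1/2}S$ directly. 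Your organization hides these coefficients inside $\Phi_{\T}^{*}\Phi_{\T}f$, and extracting the needed decay then requires unpacking the operator composition back into same-scale wave-packet pieces, handling the within-tree cross terms $S_{P_{k}}^{*}S_{P''_{k}}$ you allude to, and redoing the disjointness bookkeeping anyway. In short, the missing step is essentially the content of the paper's $S_{2}$ estimate; the reduction to single tiles via Lemma~\ref{l:ortho0} is what makes that step clean, and bypassing it does not buy you anything.
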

Before the proof proper, we enucleate the almost orthogonality of the single tile operators within a $k$-lacunary tree.
\begin{lem} \label{l:ortho0} Let $\mathcal X$ be a Hilbert space and $\T$  be a $k$-lacunary tree. Then
\[
\sqrt{|I_\T|}\mathsf{eng}_{k}(f)(\mathsf T; 2)\lesssim\sqrt{\sum_{P\in \T}  |I_P| \|f\|_{P,k,2}^2
}.
%\lesssim \sqrt{|I_\T|}\mathsf{eng}_{k,f}(\mathsf T; 2).
\]
\end{lem}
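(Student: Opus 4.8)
The natural strategy is a scale‑by‑scale almost‑orthogonality argument. Fix an arbitrary type‑$k$ tree operator $T_\T f=\sum_{P\in\T}S_{P_k}f$, where $S_{P_k}f=\psi_P\,T_{m_{P_k}}f$ with $\psi_P\in\Psi_{I_P}$ and $m_{P_k}\in M_{\omega_{P_k}}$, and organise it according to \eqref{e:Tj} as $T_\T f=\sum_{j\in\mathbf{j}_\T}G_j$ with $G_j:=\sum_{P\in\T(j)}S_{P_k}f$. By property b.\ of trees every $P\in\T(j)$ carries the common frequency interval $\omega_{j,k}$, while $\supp\widehat{\psi_P}\subset B_{\delta\ell(I_P)^{-1}}$ with $\delta$ tiny; hence $\widehat{G_j}$ is supported in a fixed dilate $C\omega_{j,k}$. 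It then suffices to prove
\[
\|T_\T f\|_{L^2(\R;\mathcal X)}^2\lesssim\sum_{P\in\T}|I_P|\,\|f\|_{P,k,2}^2,
\]
since dividing by $|I_\T|$ and taking the supremum over all type‑$k$ tree operators gives the lemma (for a $k$‑lacunary tree $\T$ the quantity $\mathsf{eng}_k(f)(\T;2)$ coincides with $\|f\|_{\T,k,2}=|I_\T|^{-1/2}\sup_{T_\T}\|T_\T f\|_{L^2(\R;\mathcal X)}$).

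First I would dispose of the sum over scales. Since $\mathcal X$ is a Hilbert space, $L^2(\R;\mathcal X)$ is itself a Hilbert space on which the Fourier transform acts isometrically, so Plancherel applies coordinatewise. By $k$‑lacunarity the intervals $3\omega_{j,k}$, $j\in\mathbf{j}_\T$, are pairwise disjoint $J$‑dyadic intervals, and — $J$ being chosen large relative to the fixed parameters — the dilated supports $C\omega_{j,k}$ of the $\widehat{G_j}$ still have $O(1)$ overlap; this is the standard frequency separation underlying \cite[Proposition 6.1]{HLC} and \cite[Section 4]{MTT2}. Consequently
\[
\|T_\T f\|_{L^2(\R;\mathcal X)}^2\lesssim\sum_{j\in\mathbf{j}_\T}\|G_j\|_{L^2(\R;\mathcal X)}^2.
\]

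It remains to bound a single scale. For fixed $j$, the spatial intervals $I_P$, $P\in\T(j)$, are pairwise disjoint $J$‑dyadic intervals of the common length $2^{-jJ}$, so the rescaled displacements $d_P(x):=(x-c(I_P))/\ell(I_P)$ are $1$‑separated and $\sum_{P\in\T(j)}\langle d_P(x)\rangle^{-2(N-10)}\lesssim1$. Using $|\psi_P(x)|\lesssim\langle d_P(x)\rangle^{-(N-10)}u_P(x)$ with $u_P$ as in \eqref{e:up2}, the triangle and Cauchy--Schwarz inequalities in $P$, applied pointwise in $x$, give
\[
\|G_j(x)\|_{\mathcal X}^2\lesssim\sum_{P\in\T(j)}u_P(x)^2\,\|T_{m_{P_k}}f(x)\|_{\mathcal X}^2,
\]
whence, integrating and invoking the definition \eqref{e:sscaleen} of $\|f\|_{P,k,2}$ (note $m_{P_k}\in M_{\omega_{P_k}}$),
\[
\|G_j\|_{L^2(\R;\mathcal X)}^2\lesssim\sum_{P\in\T(j)}\|u_P\,T_{m_{P_k}}f\|_{L^2(\R;\mathcal X)}^2\le\sum_{P\in\T(j)}|I_P|\,\|f\|_{P,k,2}^2.
\]
Summing over $j\in\mathbf{j}_\T$ and using $\T=\bigcup_j\T(j)$ yields the displayed bound, and the lemma.

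The one genuinely delicate point is the cross‑scale orthogonality: it rests on the Whitney‑type separation $\dist(\xi_\T,\omega_{P_k})\sim\ell(\omega_{P_k})$ enjoyed by lacunary tiles in a rank~1 collection, together with the freedom to take $J$ large so that a fixed dilation of the $\omega_{j,k}$ still has bounded overlap. This is precisely where the $k$‑lacunary hypothesis on $\T$ enters — for a component in which $\T$ is overlapping the intervals $\omega_{j,k}$ all cluster around $\xi_\T$ and no $L^2$‑orthogonality across scales is available.
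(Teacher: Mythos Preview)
Your proof is correct and follows essentially the same route as the paper: both reduce to $\|T_\T f\|_{L^2(\R;\mathcal X)}^2\lesssim\sum_{P\in\T}|I_P|\|f\|_{P,k,2}^2$, use Hilbert-space frequency orthogonality to decouple the scales $j\in\mathbf{j}_\T$, and then exploit the spatial separation of $\{I_P:P\in\T(j)\}$ to control a single scale. The only cosmetic difference is in the single-scale step: you apply pointwise Cauchy--Schwarz via $|\psi_P|\lesssim\langle d_P\rangle^{-(N-10)}u_P$ and $\sum_P\langle d_P\rangle^{-2(N-10)}\lesssim1$, while the paper writes $\chi_P=u_P\tilde\chi_P$ with $\tilde\chi_P\in\Psi_{I_P}$, expands the bilinear form $\sum_{P,n}\langle S_{P_k}f,S_{P_k^{+n}}f\rangle$, and uses $\|u_Pu_{P^{+n}}\|_\infty\lesssim\langle n\rangle^{-10}$ together with \eqref{e:sscaleentv2}. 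The two arguments are equivalent.

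One small remark: your parenthetical that $\mathsf{eng}_k(f)(\T;2)$ ``coincides with'' $\|f\|_{\T,k,2}$ for a $k$-lacunary $\T$ is a slight overstatement, since the supremum in $\mathsf{eng}_k$ ranges over subtrees which may carry a strictly smaller top interval. The paper glosses over the same point (``choose a tree operator $T_\T$ that nearly achieves the supremum in $\mathsf{eng}_k$''); in practice this is harmless because $0\in\Psi_{I_P}$ allows zero-padding, and in the applications the trees come from a greedy selection with their natural top intervals, but strictly speaking the reduction is to bounding $\sup_{T_\T}\|T_\T f\|_{L^2(\R;\mathcal X)}$ over tree operators on $\T$ with the given top $I_\T$.
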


\begin{proof}By modulation invariance, it suffices to take care of the case $\xi_\T=0$.
%%Notice preliminarily that, denoting  $
%S_{P_k} f=\chi_{P}T_{m_{P_k}} f$, we have
%\[
% \tilde m_{P_k} (\xi) =  m_{P_k} (\xi) + \ell(I_{P_k})^2 \e^{} D_\xi^2 m_{P_k} (\xi)
%\]
%belongs to $M_{\omega_{P_k}}$ and thus the operators $\tilde S_{P_k} g= \chi_P T_{\tilde m_{P_k}}$ belong to $\mathbb S_{P_k}$. Let $\T (j) =\{P \in \T: \ell(I_P)=2^{-jJ}\}$.
% We prove the inequality on the left first.
  Choose a tree operator $T_\T=\sum_{P\in \T} S_{P_k}$ that nearly achieves the supremum in $\mathsf{eng}_{k}(f)(\mathsf T; 2)$   and write $  S_{P_k}g= \chi_{P} T_{m_{P_k}}g$.
From the disjointness of the frequency supports, we have that, referring to \eqref{e:Tj}
\[
\langle {S}_{P_k} f, {S}_{P'_k} f \rangle \neq 0 \implies P, P'\in \T(j)
\]
For $n\in \mathbb Z$ denote by $P^{+n}$ the (at most) unique tri-tile $P'\in \T(j)$ with $I_{P'} = I_P + n \ell(I_P) $. Then define
\begin{equation} \label{e:up2}
\begin{split}
\tilde \chi_{P}\coloneqq \frac{\chi_P}{u_P},
\qquad \tilde S_{P_k} g \coloneqq \tilde \chi_{P} T_{m_{P_k}}g.
\end{split}
\end{equation}
It is immediate to see that $\tilde \chi_P \in \Psi_{I_P}$ as multiplying by a polynomial does not change the frequency support neither significantly alters the rapid decay of $  \chi_{P}$, hence $\tilde S_{P_k} $ belongs to $\mathbb S_{P_k}$.
Therefore
\[
\begin{split}
|I_\T|\mathsf{eng}_{k}(f)(\mathsf\T; 2)^2 &\lesssim
\|T_\T f\|_{L^2(\R; \mathcal X)}^2   \lesssim \sum_{j\in \mathbf{j}_\T} \sum_{P\in \T(j)} \sum_{n \in \mathbb Z} \int  {S}_{P_k} f
\overline{{S}_{P^{+n}_k} f} \\ &\leq  \sum_{j\in \mathbf{j}_\T} \sum_{P\in \T(j)} \sum_{n \in \mathbb Z} \int
|{\tilde S}_{P_k} f| |{{\tilde S}_{P^{+n}_k} f} | u_P u_{P^{+n} } \\ & \lesssim   \sum_{j\in \mathbf{j}_\T} \sum_{P\in \T(j)} \sum_{n \in \mathbb Z} \langle n \rangle^{-10}  \left(
\|{\tilde S}_{P_k} f\|_{L^2(\R; \mathcal X)}^2 + \| {{\tilde S}_{P^{+n}_k} f}\|_{L^2(\R; \mathcal X)}^2 \right) \\ &\lesssim \sum_{P\in \T}
\|{\tilde S}_{P_k} f\|_{L^2(\R; \mathcal X)}^2   \lesssim \sum_{P\in \T}  |I_P| \|f\|_{P,k,2}^2
\end{split}
\]
and this proves the claimed inequality. We have used $ \|u_P u_{P^{+n} }\|_{\infty}\lesssim \langle n \rangle^{-10}$ to pass to the second line and \eqref{e:sscaleentv2} in the last estimate.
%We now prove the inequality on the right. We may choose $S_P=\chi_{P} T_{m_{P_k}}\in \mathbb S_{P_k} $ so that
%\[
%\begin{split}
%\sum_{P\in \T}|I_P| \|f\|_{P,k,2}^2 \lesssim \sum_{P\in \T}   \int  |{S}_{P_k} f|^2  \lesssim \sum_{j \in \mathbf{j}_T} \sum_
%\end{split}
%\]
% The proof of the first inequality is complete. As for the second, let $f_j$ be the rough Fourier cutoff of $f$ to the  interval $Q_{j,k}$. Recall that $\omega_{P_k}=Q_{j,k}$ for all $P \in \T(j)$.
\end{proof}
\begin{proof}[Proof of Lemma \ref{l:ortho2}] Let us choose the scaling $\|f\|_{L^2(\R; \mathcal X)}=1$.
From Lemma \ref{l:ortho0}, we may bound the quantity
\[
S\coloneqq  \sqrt{\sum_{P\in \mathbb{T}}  |I_P| \|f\|_{P,k,2}^2}
\]
in place of the  left hand side of \eqref{e:orthohilb}. Then
\begin{equation}
\label{e:ortho20}
S^2\sim \sum_{P \in \mathbb T} |I_P| \|f\|_{P,k,2}   \frac{\| u_P{  T}_{m_{P_k}} f\|_{L^2(\R; \mathcal X)}}{|I_P|^{\frac12}}
%  \left\langle    \sum_{P \in \mathbb T}  { S}_{P_k}^*{\tilde S}_{P_k} f, f \right\rangle \leq \left\|    \sum_{P \in \mathbb T}  {\tilde S}_{P_k}^*{\tilde S}_{P_k} f \right\|_{L^2(\R; \mathcal X)}.
\end{equation}
having linearized the suprema in $\|f\|_{P,k,2}$ with a suitable choice ${m}_{P_k} \in   M_{P_k}$, \ $P \in  \mathbb T$. From now on, as $m_{P_k}$ and $k$ are fixed, we simply write $T_P$ in place of ${  T}_{m_{P_k}}$, and by using Lemma \ref{l:bernstein}, identifying $\mathcal X'$ with $\mathcal X$ via Riesz representation, we have
\[
\frac{\| u_P{  T}_{P} f\|_{L^2(\R; \mathcal X)} }{|I_P|^{\frac12}}  = \frac{1}{|I_P|} \langle {  T}_{P}^* v_P, f  \rangle, \qquad P \in \mathbb T
\]
for some function $v_P$ with
\begin{equation}
\label{e:vp}
|v_P(x)|_{\mathcal X} \lesssim u_P(x), \qquad x\in \R.
\end{equation}
These considerations lead to the estimate
\begin{equation}
\label{e:ortho21}
S^2\sim \left\langle \sum_{P \in \mathbb T}  \|f\|_{P,k,2}   T_{P}^* v_P, f \right\rangle     \leq \left\|   \sum_{P \in \mathbb T}  \|f\|_{P,k,2}   T_{P}^* v_P \right\|_{L^2(\R; \mathcal X)}.
\end{equation}
  Define now
\[\mathbb T_<(P)\coloneqq\{P'\in \mathbb T : \omega_{P_k} \subsetneq \omega_{P'_k} \}, \qquad \mathbb T_=(P)\coloneqq\{P'\in \mathbb T : \omega_{P'_k} =\omega_{P_k} \}. \]  Frequency support considerations applied to the inner products $\langle T_{P}^* v_P,   T_{P'}^* v_{P'} \rangle$  then lead to the chain of inequalities
\begin{equation}
\label{e:ortho22}
\begin{split} &\quad
\left\|     \sum_{P \in \mathbb T}   \|f\|_{P,k,2}   T_{P}^* v_P\right\|_{L^2(\R; \mathcal X)}^2\\
& = \sum_{P \in \mathbb T } \sum_{P' \in \mathbb T_=(P)}  \|f\|_{P,k,2}  \|f\|_{P',k,2}  \langle T_{P}^* v_P,   T_{P'}^* v_{P'} \rangle \\ &+ 2 \sum_{P \in \mathbb T } \sum_{P' \in \mathbb T_<(P)}   \|f\|_{P,k,2}   \|f\|_{P',k,2}  \langle T_{P}^* v_P,   T_{P'}^* v_{P'} \rangle  \coloneqq S_1 + 2S_2.
\end{split}
\end{equation}
We first treat $S_1$.
Note that if  $P'\in \mathbb P_=(P)$ then $P'=P^{+n}$ for some $n\in \mathbb Z$, see the line before \eqref{e:up2} for a definition. The decay of $v_P$ \eqref{e:vp} and  the kernel estimate for $T_P^*$ guarantee the pointwise bound
\begin{equation}
\label{e:ortho231}
|T^*_Pv_P|_\mathcal X  \lesssim u_P
\end{equation}
whence
\begin{equation}
\label{e:ortho232}
 | \langle T_{P}^* v_P,   T_{P^{+n}}^*v_{P^{+n}} \rangle| \lesssim |I_P|\langle n \rangle^{-10}, \qquad n \in \mathbb Z.
\end{equation}
Therefore, we control
\begin{equation}
\label{e:ortho23}
S_1 \lesssim
\sum_{P\in \mathbb P} \sum_{n \in \mathbb Z} \langle n \rangle^{-10} \left(
|I_P|\|f\|_{P,k,2}^2 +  |I_{P^{+n}}|\|f\|_{P^{+n},k,2}^2\right) \lesssim S^2
\end{equation}
using the definition of $S$. We turn to  $S_2$. Notice that if $P' \in \mathbb T_<(P), $ then $\ell(I_{P'})< \ell(I_{P}) $.  Relying on \eqref{e:ortho231} again
\begin{equation}
\label{e:ortho24}
 | \langle T_{P}^* v_P,    T_{P'}^*v_{P'} \rangle| \lesssim |I_{P'}| \left\langle \frac{\dist(I_P,I_{P'})}{\ell(I_P)} \right\rangle^{-10} \lesssim \|\mathbf{1}_{I_{P'}} u_P\|_1, \qquad P'\in \mathbb T_<(P).
\end{equation}
Also note  that, due to the  condition ii.\ in the definition of the strongly disjoint trees, if $P'\in \mathbb T_<(P)$ then $I_{P'}\cap I_{\T(P)}=\varnothing$ where ${\T(P)}$ is the unique tree in $\mathbb T$ where $P$ belongs, and furthermore if $P',P''\in \mathbb T_<(P) $ then $I_{P'}\cap I_{P''}=\emptyset$: this is a combination of conditions i.\ and ii. of the definition, see \cite{HLC,MTT2} for details.
%Then we estimate
%\begin{equation}
%\label{e:ortho25}
%\begin{split}
%  \|f\|_{P,k,2}  \|f\|_{P',k,2}  |\langle T_{P}^* v_P,   T_{P'}^* v_{P'} \rangle|   &\lesssim \int |{\tilde S}_{P_k} f  { \tilde S}_{P'_k} f| u_P u_{P'} \leq
%\|{\tilde S}_{P_k}f\|_{L^\infty(\R;\mathcal X)}
%\|{\tilde S}_{P'_k}f\|_{L^\infty(\R;\mathcal X)}
%\|u_P u_{P'}\|_1
%\\ & \lesssim \frac{\|{\tilde S}_{P_k}f\|_{L^2(\R;\mathcal X)}}{|I_P|^{\frac12}}\frac{\|{\tilde S}_{P'_k}f\|_{L^2(\R;\mathcal X)}}{|I_P|^{\frac12}}
%\|\mathbf{1}_{I_{P'}} u_P\|_1
%\end{split}
%\end{equation}
%where we used Lemma \ref{l:bernstein} to pass to the second line, in conjunction with \eqref{e:ortho24}.
 Using definition \eqref{e:sscaleen}, estimate \eqref{e:ortho24}, the trivial estimate $\|\mathbf{1}_{I_{P'}} u_P\|_1\leq \|u_P\|_1\lesssim |I_P|$, Cauchy-Schwarz, disjointness and separation from $I_{\T(P)}$ of $\{I_{P'}:  P'\in \mathbb T_<(P)\}$, and    we obtain
\begin{equation}
\label{e:ortho26}
\begin{split}
S_2 & \lesssim  \mathsf{eng}_{k,ss}(f) (\mathbb{T} ;2)    \sum_{P\in \mathbb T}    \|f\|_{P,k,2}  \sum_{P'\in \mathbb T_<(P)}\|\mathbf{1}_{I_{P'}} u_P\|_1 \\
&\le
\mathsf{eng}_{k,ss}(f) (\mathbb{T} ;2)    \sum_{P\in \mathbb T}   \sqrt{|I_P|} \|f\|_{P,k,2}   \|\mathbf{1}_{\R\setminus I_{\T}}  u_P\|_1^{\frac12}
\\ & \lesssim
\mathsf{eng}_{k,ss}(f) (\mathbb{T} ;2) \left( \sum_{P\in \mathbb T } |I_P|  \|f\|_{P,k,2}^2 \right)^{\frac12} \left(  \sum_{\T \in \mathbf T} \sum_{P \in \T}     \|\mathbf{1}_{\R\setminus I_{\T}} u_P\|_1  \right)^{\frac12}
\\ & \lesssim \mathsf{eng}_{k,ss}(f) (\mathbb{T} ;2) \left( \sum_{\T \in \mathbf T} |I_\T|  \right)^{\frac12} S;     \end{split}
\end{equation}
we omitted some of the details, see e.g.\ \cite[Proposition 6.1]{HLC}. Summarizing \eqref{e:ortho21}, \eqref{e:ortho22}, \eqref{e:ortho23}, \eqref{e:ortho26}
\[
S^2\lesssim \sqrt{S_1 + 2S_2} \lesssim \left( S^2+ \mathsf{eng}_{k,ss}(f) (\mathbb{T} ;2) \left( \sum_{\T \in \mathbf T} |I_\T|  \right)^{\frac12} S\right)^{\frac12}
\]
which yields the claimed bound. The details can be read from \cite[Proposition 6.1]{HLC}, hence we omit them.
\end{proof}
\subsection{Transporting almost orthogonality to interpolation spaces}In the previous subsection, we have shown that the definitions \eqref{e:engdef}, \eqref{e:sscaleen} lead to Hilbert space valued orthogonality estimates for families of strongly disjoint trees. The point is that the definitions \eqref{e:engdef}, \eqref{e:sscaleen} are of maximal nature and involve more general operators  than the rank 1 projections $f\mapsto \langle f,\varphi_{P_k}  \rangle \varphi_{P_k} $ of \cite{HLC}, namely operators of the class $\mathbb S_{P_k}$. It is because of this additional generality that we had to reproduce, with small changes, the classical $TT^*$ arguments of \cite{HLC}.

Now that our version of \cite[Prop.\ 6.1]{HLC}, namely Lemma \ref{l:ortho2} is in place, the   interpolation arguments of \cite[Section 7]{HLC} may be perused \emph{mutatis mutandis}, leading to the following almost orthogonality estimate for interpolation spaces.
\begin{prop} \label{p:int}
Let $2\leq p<\infty$ and $\mathcal X=[\mathcal Y_0, \mathcal Y_1]_{\frac2p}$ be the complex  interpolation space of a $\UMD$ space $\mathcal Y_0$ and a Hilbert space  $\mathcal Y_1$. Then for all $0<\alpha\leq 1$ the inequality
\begin{equation}
\label{e:interp1}
\left\| |I_\T|^{\frac1p}\mathsf{eng}_{k}(f)(\mathsf T; p) \right\|_{\ell^p(\T\in \mathbf{ T})} \lesssim_\alpha \|f\|_{L^p(\R; \mathcal X)} + \left( \|f\|_{L^\infty(\R; \mathcal X)} \left[\sum_{\T \in \mathbf T}|I_\T|\right]^{\frac1p} \right)^{1-\alpha}\|f\|_{L^p(\R; \mathcal X)}^\alpha
\end{equation}
holds uniformly over all collections $\mathbf T$ of $k$-strongly disjoint trees.
\end{prop}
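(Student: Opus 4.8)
The plan is to transfer the Hilbert-space orthogonality bound of Lemma \ref{l:ortho2} to the interpolation space $\mathcal X=[\mathcal Y_0,\mathcal Y_1]_{2/p}$ by the complex/Stein interpolation scheme of \cite[Section 7]{HLC}; since the Hilbert endpoint is already in place (Lemma \ref{l:ortho2}), the argument is \emph{mutatis mutandis}. The first step is linearization. As $\mathbf T$ and each $\T$ may be taken finite, for every $\T\in\mathbf T$ fix a type-$k$ tree operator $T_\T$ realizing, up to a factor $1+\eps$, the supremum defining $\mathsf{eng}_k(f)(\T;p)$. Since $|I_\T|^{1/p}\mathsf{eng}_k(f)(\T;p)$ then (essentially) equals $\|T_\T f\|_{L^p(\R;\mathcal X)}$, Fubini identifies the left-hand side of \eqref{e:interp1} with the norm of the linear operator $U\colon f\mapsto (T_\T f)_{\T\in\mathbf T}$ in $L^p(\R;\ell^p(\mathbf T;\mathcal X))$. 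It therefore suffices to bound $\|Uf\|_{L^p(\R;\ell^p(\mathbf T;\mathcal X))}$ by the right-hand side of \eqref{e:interp1}, uniformly over the family $\{T_\T\}$.

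I would then interpolate with parameter $\theta=2/p$, using the identifications $L^p(\R;\mathcal X)=[\mathcal Z_0,\ L^2(\R;\mathcal Y_1)]_{2/p}$ and $L^p(\R;\ell^p(\mathbf T;\mathcal X))=[\mathcal W_0,\ L^2(\R;\ell^2(\mathbf T;\mathcal Y_1))]_{2/p}$, where $\mathcal Z_0,\mathcal W_0$ are $\mathcal Y_0$-valued $\mathrm{BMO}$-type endpoint spaces; the appearance of $\mathrm{BMO}$ rather than $L^\infty$ is forced because tree operators, being $L^2(\R;\mathcal Y_0)$-bounded Calder\'on--Zygmund operators on the $\UMD$ space $\mathcal Y_0$, map $L^\infty(\R;\mathcal Y_0)$ only into $\mathrm{BMO}(\R;\mathcal Y_0)$ (and are uniformly bounded on every $L^r(\R;\mathcal Y_0)$, $1<r<\infty$), exactly as recorded in the discussion of tree operators in Section \ref{s:3}. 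The legitimacy of these identifications in the $\UMD$ setting is vector-valued $H^1$--$\mathrm{BMO}$ duality together with Fefferman--Stein interpolation. On the line $\Re z=1$ the input is Lemma \ref{l:ortho2} applied to the $\mathcal Y_1$-valued slice (here $\mathcal X$ specializes to $\mathcal Y_1=[\mathcal Y_0,\mathcal Y_1]_{1}$, so the lemma applies verbatim), together with the trivial domination $\mathsf{eng}_{k,ss}(\cdot)(\mathbb T;2)\lesssim\|\cdot\|_{L^\infty(\R;\mathcal Y_1)}$ from \eqref{e:sscaleentv}; on the line $\Re z=0$ the input is the uniform boundedness of $U$ between the $\mathrm{BMO}$-type spaces furnished by the uniform $\mathcal Y_0$-valued CZ bounds for the $T_\T$. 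Carrying the two-term right-hand side of Lemma \ref{l:ortho2} through the three-lines estimate gives, for the first summand, $\|f\|_{L^p(\R;\mathcal X)}$, and for the defect term $\bigl(\|f\|_{L^\infty}[\sum_\T|I_\T|]^{1/2}\bigr)^{1/3}\|f\|_{L^2}^{2/3}$ the mixed term of \eqref{e:interp1}; the free parameter $\alpha$ is handled exactly as in \cite[Section 7]{HLC}, namely by performing this interpolation against the family of defect-type bounds and then comparing the two summands after normalizing $\|f\|_{L^p(\R;\mathcal X)}$.

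The main obstacle — and the only genuine point of departure from routine interpolation — is the $\Re z=0$ endpoint: unlike the rank-$1$ wave-packet projections used in \cite{HLC}, our tree operators are not $L^\infty$-bounded, so one must set up the interpolation against a genuinely $\mathrm{BMO}$-flavoured target and rely on the $\UMD$ property of $\mathcal Y_0$ to run $H^1$--$\mathrm{BMO}$ duality; in addition, the $\Re z=1$ endpoint is a ``bounded plus defect'' inequality rather than a bounded-operator bound, so the Stein argument must be arranged so as to transport the defect functional $\mathsf{eng}_{k,ss}$ faithfully, and all constants must be kept independent of the (a priori $f$-dependent) choice of linearizing tree operators $\{T_\T\}$ and of $\mathbf T$. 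The remaining ingredients — interpolation of the vector-valued $\ell^p(\mathbf T;\cdot)$ and $L^p(\R;\cdot)$ scales, and the elementary bookkeeping of $\alpha$ — are standard and identical to \cite{HLC}.
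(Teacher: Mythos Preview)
Your proposal is correct and follows essentially the same route as the paper: linearize the energies by tree operators, use Lemma \ref{l:ortho2} together with \eqref{e:sscaleentv} at the Hilbert endpoint, use the uniform $L^\infty(\R;\mathcal Y_0)\to\mathrm{BMO}(\R;\mathcal Y_0)$ bound for (demodulated) tree operators at the $\UMD$ endpoint, and run complex interpolation \`a la \cite[Section~7]{HLC}. The paper organizes this slightly more cleanly as three explicit steps---first upgrading Lemma \ref{l:ortho2} to the full $p=2$ case of \eqref{e:interp1} with arbitrary $\alpha$ (via \cite[Proposition~6.6]{HLC}), then recording the $\mathrm{BMO}$ endpoint \eqref{e:interp2}, and only then interpolating (via \cite[Proposition~7.3]{HLC})---whereas you fold the $\alpha$-bookkeeping into the interpolation step; but the ingredients and the logic are the same.
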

\begin{proof} The first step of the proof consists of deducing the case $p=2$ of \eqref{e:interp1} from Lemma \ref{l:ortho2}. With
\[
\mathsf{eng}_{k,ss}(f) (\mathbb{T} ;2) \lesssim \|f\|_{L^\infty(\R; \mathcal X)}
\]
in hand, a consequence of \eqref{e:sscaleentv}, this is accomplished  following step by step the proof of \cite[Proposition 6.6]{HLC}.

The second step consists in the deduction of an endpoint at $p=\infty$, which is
\begin{equation}
\label{e:interp2}
\left\|  \mathsf{eng}_{k}(f)(\mathsf T; \mathrm{BMO}) \right\|_{\ell^\infty(\T\in \mathbf{ T})} \lesssim \|f\|_{L^\infty(\R; \mathcal X)}
\end{equation}
having denoted
\[
\|f\|_{k,\T, \mathrm{BMO}}\coloneqq
 \sup
 \left\| \mathrm{Mod}_{-\xi_\T}  T_{\mathsf T} f \right\|_{\mathrm{BMO}(\R; \mathcal{X})}, \qquad  \mathsf{eng}_{k}(f)(\mathsf T; \mathrm{BMO})\coloneqq
 \sup_{\substack{\mathsf T \subset \mathbb P \\ \T \, k-\textrm{lacunary }}}  \|f\|_{k,\T, \mathrm{BMO}}
\]
where $\mathrm{Mod}_{-\xi} $ stands for modulation by $-\xi$, and usual the  first supremum is taken over all possible choices of type $k$  tree operators  $T_{\mathsf T}$.  \
The estimate \eqref{e:interp2} is an immediate consequence of the uniform estimate for demodulated tree operators \[\mathrm{Mod}_{-\xi_\T}T_\T \mathrm{Mod}_{\xi_\T}: L^\infty(\R; \mathcal X)\to \mathrm{BMO}(\R; \mathcal{X}),\] which holds by virtue of the fact that each operator $\mathrm{Mod}_{-\xi_\T}T_\T \mathrm{Mod}_{\xi_\T}$ is a Calder\'on-Zygmund operator.

Finally, the proof of the proposition is obtained by complex interpolation of the case $q=2$ of \eqref{e:interp1}  with \eqref{e:interp2}. Details are given in \cite[Proposition 7.3]{HLC}.
\end{proof}
\subsection{The proof proper of Lemma \ref{l:energy}} The proof proceeds via an iterative algorithm similar to \cite[Proposition 8.4]{HLC}. One additional remark necessary here is that the selected trees come from a greedy selection process, and therefore satisfy properties a. b.\ and c.\ of Subsection \ref{ss:trees}, cf.\ \cite[Lemmata 4.4 and 4.7]{MTT2}.

For the proof, write $p= q_{\mathcal X}$, $\lambda \coloneqq \mathsf{eng}_{k}(f)(\mathbb P; p) $ and let $\alpha\in (0,1)$ be chosen so that $q=p/\alpha$.  Performing an iterative algorithm analogous to  \cite[Lemma 7.7]{MTT}, we  decompose
\[
\mathbb P\coloneqq \mathbb P^- \cup \mathbb P^+,
\]
where \[ \mathsf{eng}_{k}(f)(\mathbb P^-; p)\leq\frac\lambda 2 \] and  $ \mathbb P^+ = \bigcup\{\T: \T\in \mathbf T\}$ is a family of greedily selected trees with the following property: for each $\T$ there exists a $k$-lacunary tree $\T'\subset \T$ with $I_{\T}= I_{\T'}$ and the family $ \mathbf{T}'= \{\T': \mathsf T \in \mathbf T\}$  consists of $k$-strongly disjoint trees with
\[
\mathsf{eng}_{k}(f)(\mathsf T; p)   \gtrsim \lambda.
\]
Using Proposition \ref{p:int} in the second inequality,
\[
\begin{split}
\lambda^p\sum_{\T \in \mathbf T} |I_\T|&\lesssim  \left\| |I_\T|^{\frac1p}\mathsf{eng}_{k}(f)(\mathsf T; p) \right\|_{\ell^p(\T\in \mathbf{ T})}^p \lesssim
\|f\|_{L^p(\R; \mathcal X)}^p +   \|f\|_{L^\infty(\R; \mathcal X)}^{p(1-\alpha)} \left(\sum_{\T \in \mathbf T}|I_\T|\right)^{1-\alpha}  \|f\|_{L^p(\R; \mathcal X)}^{\alpha p}
\\ & \lesssim |F| + \left(\sum_{\T \in \mathbf T}|I_\T|\right)^{1-\alpha} |F|^\alpha
\end{split}
\]
Dividing into cases depending on whether $|F|$ the summand in the last line is larger or not than the  $|F|^\alpha$ one,
\[
\sum_{\T \in \mathbf T} |I_\T| \lesssim \max\{ \lambda^{-p},\lambda^{-q}\}|F|
\lesssim \lambda^{-q} |F|
\]
which is what we had to prove to conclude Lemma \ref{l:energy}. In the last comparison we have used that $q>p$ and
\[
\lambda \lesssim \sup_{P \in \mathbb P} \inf_{I_P} \mathrm{M}(|f|_\mathcal{X}) \lesssim 1,  \]
a consequence of Lemma \ref{l:engbd}. The proof of Lemma \ref{l:energy} is complete.

\section{Proof of Lemma \ref{l:engbd}} \label{s:engbdpf}
Throughout this proof,  if $I$ is a $J$-dyadic interval, we write $I^{+v}=I+ v\ell(I)$  for $v\in \mathbb Z$ to denote the $v$-th translate of $I$. Further, we introduce the local notation
\begin{equation}
\label{e:gamma}
\gamma_I(x)\coloneqq \left \langle \frac{x -c(I)}{\ell(I)} \right  \rangle^{100}, \qquad x\in \R.
\end{equation}
The polynomial $\gamma_I$ will be used to apply the so-called localization trick. As we perform this a few times in the proof, we isolate the related notation here. If $\T$ is a $k$-lacunary tree and $T_\T$ a tree operator, we write
\begin{equation}
\label{e:tildeT}
\tilde T_{\T} g\coloneqq \sum_{P \in \T}  \tilde S_{P_k} g, \quad \tilde S_{P_k} g  \coloneqq \gamma_{I_\T}  S_{P_k} g.
\end{equation}
It is immediate to verify that $\tilde S_{P_k}\in \mathbb S_{P_k}$ for all $P\in \mathsf T$, so that $\tilde T_\T$ is also a tree operator.

 The proof strategy is an adaptation of \cite[Section 9]{HLC}: indeed, the bound of Lemma \ref{l:engbd}  is an immediate consequence of  the   estimate \eqref{e:engbd1} below. Having fixed a $k$-lacunary tree $\T$, there holds
\begin{equation}
\label{e:engbd1}
\left\| T_{\T} f\right\|_{L^q(\R; \mathcal X)} \lesssim_q \lambda |I_\T|^{\frac1q}, \qquad \lambda\coloneqq \sup_{I\in \mathcal I} \inf_{  I} \mathrm{M}(|f|_\mathcal X), \quad \mathcal I\coloneqq\{I_P: P \in \mathsf T\}, \qquad 1<q<\infty.
\end{equation}
The estimate is uniform over tree operators $T_\T$.

  By modulation invariance of \eqref{e:engbd1},  we may reduce to treating the case $\xi_\T=0$.     Then, estimate \eqref{e:engbd1}  will be obtained as a consequence of the next lemma.
 \begin{lem} \label{l:engbdin} Let $\T$ be a $k$-lacunary tree with $\xi_\T=0$, $T_\T$ be a tree operator. For each $J$-dyadic interval  $K\subset \R$  there exists a constant $a_K$ with the property that
\begin{equation}
\label{e:engbd2}
\left\|\mathbf 1_{K} (T_\T f - a_K)\right\|_{L^{1,\infty}(\R; \mathcal X)} \lesssim \lambda |K|
\end{equation}
with bound independent of $K$, $T_\T$ and $\T$. In particular, if $\ell(K)\geq \ell(I_\T)$ we may take $a_K=0$.
\end{lem}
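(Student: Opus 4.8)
\textit{Setup.} Since $\xi_\T=0$ by hypothesis, the tree operator $T_\T$ is a vector-valued Calder\'on--Zygmund operator for any $\UMD$ target; the same is true, with uniform constants, for every operator associated to a subcollection of $\T$, since such a subcollection is again a $k$-lacunary tree with top data $(I_\T,0)$ and its symbol stays uniformly of class $S^0_{1,1}$. Hence all these operators are bounded on $L^{q}(\R;\mathcal X)$ for $1<q<\infty$, are of weak type $(1,1)$, i.e.\ $L^1(\R;\mathcal X)\to L^{1,\infty}(\R;\mathcal X)$, and map $L^\infty\to\mathrm{BMO}$. The plan is to let $a_K$ absorb the contribution to $T_\T f$ on $K$ of the tiles of spatial scale larger than $\ell(K)$, and to handle the remaining tiles, which act at scale $\le\ell(K)$, through the weak $(1,1)$ bound after localizing $f$ near $K$. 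Concretely, split $\T=\T_{\ge}\sqcup\T_{<}$ with $\T_{\ge}=\{P\in\T:\ell(I_P)>\ell(K)\}$, $\T_{<}=\{P\in\T:\ell(I_P)\le\ell(K)\}$, and set $a_K:=\langle T_{\T_{\ge}}f\rangle_K$. When $\ell(K)\ge\ell(I_\T)$ one has $\ell(I_P)\le\ell(I_\T)\le\ell(K)$ for all $P\in\T$, so $\T_{\ge}=\varnothing$, $T_{\T_{\ge}}=0$ and $a_K=0$, as claimed.

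\textit{Coarse part.} Since $T_{\T_{\ge}}f-a_K$ is continuous on $K$,
\[
\|\mathbf 1_K(T_{\T_{\ge}}f-a_K)\|_{L^{1,\infty}(\R;\mathcal X)}\le\|\mathbf 1_K(T_{\T_{\ge}}f-a_K)\|_{L^{1}(\R;\mathcal X)}\le|K|\,\ell(K)\,\|\nabla T_{\T_{\ge}}f\|_{L^\infty(K;\mathcal X)}.
\]
I would first upgrade the single scale bound \eqref{e:singlescaleM}: using that $m_{P_k}\in M_{\omega_{P_k}}$ has inverse Fourier transform decaying at scale $\ell(I_P)$, and that $\langle|f|_{\mathcal X}\rangle_B\le\lambda$ for every ball $B\supset I_P$ (because $\inf_{I_P}\mathrm M(|f|_{\mathcal X})\le\lambda$), one obtains $|S_{P_k}f(x)|_{\mathcal X}\lesssim\lambda\langle\dist(x,I_P)/\ell(I_P)\rangle^{-100}$, and likewise $\ell(I_P)|\nabla S_{P_k}f(x)|_{\mathcal X}\lesssim\lambda\langle\dist(x,I_P)/\ell(I_P)\rangle^{-100}$ upon writing the derivative as a tree operator of the same kind, as in Section~\ref{s:pfpsp}. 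Summing over $P\in\T_{\ge}$, the factor $\ell(K)/\ell(I_P)\le1$ gives geometric decay in the scale while the $I_P$ of a fixed scale are spatially disjoint, yielding $\ell(K)\|\nabla T_{\T_{\ge}}f\|_{L^\infty(K;\mathcal X)}\lesssim\lambda$, which closes this part.

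\textit{Fine part.} Fix a large constant $C_0$ and set $\tilde E_K:=\bigcup\{C_0 I_P:P\in\T_{<},\ C_0 I_P\cap 3K\neq\varnothing\}$. The key geometric fact is that by properties b.\ and c.\ of trees --- exactly one frequency cube per scale, and nested spatial localization sets $E_{Q,\T}$ --- the set $\bigcup_{P\in\T_{<}}I_P$ equals a single $E_{Q_0,\T}$, a disjoint union of intervals of one common length $\le\ell(K)$. Consequently $\tilde E_K$ lies in a bounded dilate of $E_{Q_0,\T}\cap(C_0{+}3)K$, so $|\tilde E_K|\lesssim|K|$ and
\[
\|f\mathbf 1_{\tilde E_K}\|_{L^{1}(\R;\mathcal X)}\le\sum_{\substack{P:\,Q_P=Q_0\\ C_0 I_P\cap 3K\neq\varnothing}}\int_{C_0 I_P}|f|_{\mathcal X}\le\lambda\sum_{\substack{P:\,Q_P=Q_0\\ C_0 I_P\cap 3K\neq\varnothing}}|C_0 I_P|\lesssim\lambda|K|.
\]
The weak $(1,1)$ bound for $T_{\T_{<}}$ then gives $\|\mathbf 1_K T_{\T_{<}}(f\mathbf 1_{\tilde E_K})\|_{L^{1,\infty}(\R;\mathcal X)}\lesssim\lambda|K|$. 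For the complementary piece $\mathbf 1_K T_{\T_{<}}(f\mathbf 1_{\R\setminus\tilde E_K})$, every tile of $\T_{<}$ has scale $\le\ell(K)$, so the kernel of $T_{\T_{<}}$ restricted to $x\in K$ is $\lesssim|x-y|^{-1}$ and decays rapidly once $|x-y|\gg\ell(K)$ or once $y$ leaves a neighborhood of the tile intervals near $K$; splitting $\R\setminus\tilde E_K$ into dyadic annuli around $K$ and dyadic coronas around $\tilde E_K$ and using again $\langle|f|_{\mathcal X}\rangle_B\lesssim\lambda$ for balls meeting a tile interval, one gets $\|\mathbf 1_K T_{\T_{<}}(f\mathbf 1_{\R\setminus\tilde E_K})\|_{L^\infty(K;\mathcal X)}\lesssim\lambda$. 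Adding the three pieces proves \eqref{e:engbd2}.

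\textit{Main obstacle.} The hard part is the fine part, and inside it the step that genuinely uses the tree structure: without the nestedness of the spatial localization sets one could not keep the local $L^1$-mass of $f$ relevant to $K$ from accumulating an (a priori unbounded) factor over the infinitely many scales $\ell(I_P)\le\ell(K)$; property c.\ collapses this to a single scale $Q_0$, which is precisely what makes $|\tilde E_K|\lesssim|K|$ and $\|f\mathbf 1_{\tilde E_K}\|_{L^1(\R;\mathcal X)}\lesssim\lambda|K|$. The error bookkeeping for $T_{\T_{<}}(f\mathbf 1_{\R\setminus\tilde E_K})$, and the verification that subcollections of $\T$ still give Calder\'on--Zygmund operators with uniform constants, are routine but require care.
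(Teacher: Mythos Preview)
Your overall strategy is sound and is a legitimate alternative to the paper's argument, but the execution of the fine part is incomplete in one place.

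\textbf{Comparison with the paper.} The coarse part is essentially the paper's argument: the paper also subtracts the large-scale contribution (it takes $a_K=\sum_{P\in\T_{\ge}}S_{P_k}f(c(K))$ rather than the average, but this is immaterial) and controls the oscillation over $K$ by the bound $|S_{P_k}f(x)|_{\mathcal X}\lesssim\lambda\langle\dist(x,I_P)/\ell(I_P)\rangle^{-100}$, exactly as you do. For the fine part the paper proceeds differently: it takes the \emph{maximal} dyadic intervals $L\in\mathcal L$ among $\{I_P:P\in\T_<,\,I_P\subset 3K\}$ and, for each $L$ separately, writes $S_If=\bar S_I(\gamma_L^{-1}f)$ with $\gamma_L(x)=\langle (x-c(L))/\ell(L)\rangle^{100}$; since $\bar S_I\in\mathbb S_P$ the sum $\sum_{I\subsetneq L}\bar S_I$ is again a tree operator, hence weak $(1,1)$, and $\|\gamma_L^{-1}f\|_{L^1}\lesssim|L|\inf_L\mathrm M(|f|_{\mathcal X})\le\lambda|L|$. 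There is no ``complementary piece'' because the smooth weight $\gamma_L^{-1}$ captures all of $f$. Your approach instead uses a single \emph{sharp} spatial cutoff $\tilde E_K$ and then has to control $T_{\T_<}(f\mathbf 1_{\R\setminus\tilde E_K})$ separately.

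\textbf{The gap.} The local piece $T_{\T_<}(f\mathbf 1_{\tilde E_K})$ is fine; your use of nesting (or, equivalently, of the maximal dyadic intervals) to get $\|f\mathbf 1_{\tilde E_K}\|_{L^1}\lesssim\lambda|K|$ is correct. The problem is the complementary piece. Your claim that $\|\mathbf 1_K T_{\T_<}(f\mathbf 1_{\R\setminus\tilde E_K})\|_{L^\infty}\lesssim\lambda$ needs more than ``dyadic annuli around $K$ and coronas around $\tilde E_K$''. Writing the operator scale by scale, for $x\in K$ one has to bound
\[
\sum_{j\in\mathbf j_{\T_<}}\tilde\chi_j(x)\,\big|T_{m_{j,k}}\big(f\mathbf 1_{\R\setminus\tilde E_K}\big)(x)\big|,\qquad \tilde\chi_j=\sum_{P\in\T(j)}\chi_{I_P}.
\]
Each summand is indeed $\lesssim\lambda$ (this follows from your own pointwise bound $|S_{P_k}f(x)|\lesssim\lambda\langle\dist(x,I_P)/\ell(I_P)\rangle^{-100}$), but to sum over $j$ you need geometric decay in the scale. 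This decay is available, but it comes from nesting again: if $x\in C_0 I_{P_0}$ for a coarsest-scale $P_0$ (the generic case when $x\in\tilde E_K$), then $B(x,c\,\ell_1)\subset\tilde E_K$ and the kernel of $T_{m_{j,k}}$ gains a factor $(\ell_1/\ell_j)^{-N}$; if instead $x\notin\tilde E_K$, then $d_j(x)=\dist(x,E_{Q_j,\T})$ is nondecreasing in $j$ by property c., so $d_j(x)/\ell_j\to\infty$ geometrically and $\tilde\chi_j(x)\cdot|T_{m_{j,k}}f(x)|\lesssim\langle d_j(x)/\ell_j\rangle^{2-2N}\lambda$ is summable. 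None of this is in your write-up; without it the sum over scales is not controlled. You correctly flag nesting as the crux, but you invoke it only for the $L^1$-mass estimate, not for the far piece where it is equally essential. Either fill in this scale-by-scale argument, or follow the paper and replace the sharp cutoff by the smooth weights $\gamma_L^{-1}$, which sidesteps the issue entirely.
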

We use Lemma \ref{l:engbdin} to finish the proof of \eqref{e:engbd1}. Fix a tree operator
$
T_{\T}$. Then, referring to \eqref{e:tildeT},
\begin{equation}
\label{e:engbd24}
\left\| T_{\T} f\right\|_{L^q(\R; \mathcal X)} = \left\| \gamma_{I_\T}^{-1} \tilde T_{\T} f\right\|_{L^q(\R; \mathcal X)} \lesssim \sum_{v\in \mathbb Z} \langle v\rangle^{-100} \left\| \mathbf{1}_{I_\T^{+v}}\tilde T_{\T} f\right\|_{L^q(\R; \mathcal X)}.
\end{equation}
But,  Lemma \ref{l:engbdin} applied to $\tilde T$ together with the John-Str\"omberg inequality yields the two estimates
\[
\left\| \mathbf{1}_{I_\T^{+v}}\tilde T_{\T} f\right\|_{L^{1,\infty}(\R; \mathcal X)} \lesssim \lambda | I_\T|,
\qquad \left\|  \tilde T_{\T} f\right\|_{\mathrm{BMO}(\R; \mathcal X)} \lesssim \lambda,
\]
which together with the John-Nirenberg inequality tell us that
\begin{equation}
\label{e:engbd25}
\left\| \mathbf{1}_{I_\T^{+v}}\tilde T_{\T} f\right\|_{L^q(\R; \mathcal X)} \lesssim \lambda | I_\T|^{\frac1q}.
\end{equation}
A combination of \eqref{e:engbd25} and \eqref{e:engbd24} finally yields \eqref{e:engbd1}.
\begin{proof}[Proof of Lemma \ref{l:engbdin}]
We fix a tree operator and use the local notation
\[
T_\T f= \sum_{I \in \mathcal I} S_I f
\]
where $S_I = S_{P_k}\in \mathbb S_{P_k} $ for the unique tri-tile $P\in \T$ with $I_P=I$.

We begin the proof with the definition of the constant $a_K$. This constant comes from the large scales contribution on $K$, that is the intervals \[\mathcal I^{\mathrm{low}}=\{I \in \mathcal I: \ell(I)>\ell(K)\}.\] For $n \in \mathbb N$ let $K^{(n)}$ be the $n$-th $J$-dyadic parent of $K$. Then if $I\in \mathcal I_{\mathrm{low}}$, it must be $I=K^{(n)+v}$ for some $n\in \mathbb N, v\in \mathbb Z$. We define
\begin{equation}
\label{e:engbd30}
a_K = \sum_{n \geq 1} \sum_{v\in \mathbb Z} S_{K^{(n)+v}} f(c(K))
\end{equation}
where we have simply set $S_{K^{(n)+v}}=0$ if $K^{(n)+v}\not\in \mathcal I$. Clearly, the second claim now follows from the first, as $\mathcal I_{\mathrm{low}}$ is empty, whence $a_K$ is zero, when $\ell(K)\geq \ell(I_\T)$.

We continue with the proof of \eqref{e:engbd2}
We claim that
\begin{equation}
\label{e:engbd3}
\mathbf{1}_K   \sum_{n \geq 1} \sum_{v\in \mathbb Z} \left| S_{K^{(n)+v}} f(c(K)) -S_{K^{(n)+v}} f(x)\right|_{\mathcal X } \lesssim \lambda.
\end{equation}
Indeed, denoting by $F=|f|_{\mathcal X}$, by  $u_{n,v}$ the kernel of $T_{K^{(n)+v}}$ and by $\chi_{n,v}= \chi_{K^{(n)+v}}$ for simplicity, and using the kernel estimates for $u_{n,v}$ and the extra decay in $v$, we have for $x\in K$
\begin{equation}
\label{e:engbd4}
\begin{split} &
\quad \left| S_{K^{(n)+v}} f(c(K)) -S_{K^{(n)+v}} f(x)\right|\\  & \leq \left |\chi_{n,v}(x)-\chi_{n,v}(c(K))\right| \left( F*|u_{n,v}|(x)\right) + \int_{x}^{c(K)}  F *  \left| Du_{n,v}\right|(z)  \, \mathrm{d} z
\\ & \lesssim  \langle v\rangle^{-100} 2^{-n} \inf_{K^{(n)+v}} \mathrm{M}F \leq \langle v\rangle^{-100} 2^{-n} \lambda,
\end{split}
\end{equation}
which is summable over $v,n$ in \eqref{e:engbd3}. The last estimate follows from the membership of $K^{(n)+v}$ to $\mathcal I$.

We now come to the small scales. We first deal with the contribution of the intervals \[\mathcal I^{\mathrm{high}}_{n,v}=\{I\in \mathcal I: \ell(I)=2^{-n} \ell(K), I\subset K^{+v}\}, \qquad n\geq 0, \,v \in \mathbb Z, \,  |v|> 1.\] Notice that this excludes the intervals $
\mathcal I^{\mathrm{high}}=\{I \in \mathcal I: I \subset 3K\} $ which will be handled as the main term.
 The $\mathcal I^{\mathrm{high}}_{n,v}$ are tail terms: in fact, with the same notations as before, if $x\in K$ and $I \in I^{\mathrm{high}}_{n,v}$
\begin{equation}
\label{e:engbd5}
|\chi_{I}(x)| \left( F* |u_{n,v}| (x) \right) \lesssim (v2^n)^{-100} \sum_{t\geq 0} 2^{-100 t} \left\langle  F \right\rangle_{[x-2^{t+1}\ell(I), x+2^{t+1}\ell(I)]}
\end{equation}
As, for $x\in K$,
\[
\left\langle  F \right\rangle_{[x-2^{t+1}\ell(I), x+2^{t+1}\ell(I)]} \lesssim
 \begin{cases} \displaystyle
 \frac{v \ell(K)}{2^{t} \ell(I)}  \left\langle  F \right\rangle_{[x-2^{10}v\ell(K), x+2^{10}v\ell(K)]} \leq (v2^n)  \displaystyle \inf_{x\in I} \mathrm{M} F (x)  &  2^{t+1}\ell(I) \leq v \ell(K) \\
 \\ \displaystyle \inf_{x\in I} \mathrm{M} F (x) &  2^{t+1}\ell(I) > v \ell(K)
\end{cases}
\]
we obtain by summation of \eqref{e:engbd5} that
\begin{equation}
\label{e:engbd6}
\mathbf{1}_K \sum_{n\geq 0} \sum_{|v|\geq 2} \sum_{I \in \mathcal I^{\mathrm{high}}_{n,v}} \left| S_{I} f \right|_{\mathcal X} \lesssim \lambda.
\end{equation}
We are left to estimate the contribution of $\mathcal I^{\mathrm{high}}$. The union of the intervals $\mathcal I^{\mathrm{hi}}$ is contained in $3K$. By possibly splitting   $\mathcal I^{\mathrm{high}}$ into three collections and replacing $I\in \mathcal I^{\mathrm{high}}$ with the corresponding smoothing interval from one of three shifted dyadic grids, so that the union is still contained in $18K$,  we can achieve the property  that if $I,L\in\mathcal I^{\mathrm{high}}$ and  $I\subset L$ then $3I \subset  L$.

 Let now  $L \in\mathcal L$ be the collection  of those $L\in \mathcal I^{\mathrm{high}}$ which are maximal with respect to inclusion and $\mathcal I(L) =\{I\in \mathcal I^{\mathrm{high}}: I\subsetneq L\}$.
First we remove the tops. It is immediate to bound
\begin{equation}
\label{e:engbd7} \sum_{L \in \mathcal L}
\left\| S_{L } f\right\|_{L^1(\R; \mathcal X)} \lesssim   \sum_{L \in \mathcal L} |L| \inf_{L} \mathrm{M}F  \lesssim \lambda |K|.
\end{equation}
We estimate one more tail term. For $n\geq 1$ let $\mathcal I^n(L)=\{I\in \mathcal I(L): \ell (I) = 2^{-n} \ell(L) \}.$
For each $ I \in \mathcal I^n(L)$, let $z_I$ be the least nonnegative integer $z$ such that $(I\pm z\ell(I)) \cap (\R\setminus L )\neq \varnothing$.  As $3I \subset L$, we have $z_I \geq 1$. Furthermore for each  integer $z\geq 1$, there are at most two intervals $ I \in \mathcal I^n(L) $ with $z_I = z$. As for $x \in \R\setminus L$ we have $\dist(x,I) \geq \ell(I)$, there holds
\[
\begin{split}
\mathbf{1}_{\mathbb R\setminus L} (x) |S_I f(x)|_X &
\lesssim \left\langle \textstyle\frac{\dist(x,I)}{\ell(I)} \right\rangle^{-100} \sup_{s\gtrsim \ell(I)} \frac{1}{|B_s(x)|}\int\displaylimits_{B_s(x)} F \lesssim \left\langle \textstyle \frac{\dist(x,I)}{\ell(I)} \right\rangle^{-99} \inf_{I} \mathrm{M} F\\ & \lesssim
\left\langle\textstyle \frac{\dist(x,I)}{\ell(I)} \right\rangle^{-90} z_I^{-9} \lambda
\end{split}
\]
Integrating over $\mathbb R\setminus L$ the last display for each $I$-summand, we have
\[
\sum_{n \geq 1} \sum_{I \in \mathcal I^n(L) }  \left\| \mathbf{1}_{\mathbb R\setminus L}   S_{I } f\right\|_{L^1(\R; \mathcal X)}
\lesssim \lambda \sum_{n \geq 1} \sum_{I \in \mathcal I^n(L) } z_I^{-9} |I| \lesssim \lambda  \sum_{n \geq 1} \sum_{z\geq 1 }z^{-9} 2^{-n}|L|  \lesssim \lambda|L|
\]
whence
\begin{equation}
\label{e:engbd8} \sum_{L \in \mathcal L} \sum_{I\in \mathcal I(L)}  \left\| \mathbf{1}_{\mathbb R\setminus L}  S_{I } f\right\|_{L^1(\R; \mathcal X)} \lesssim \lambda \sum_{L \in \mathcal L} |L| \lesssim  \lambda |K|.
\end{equation}
We are left to estimate the main term. Using disjointness of the supports of the summands below
\begin{equation}
\label{e:engbd9}
\left\| \sum_{L \in \mathcal L} \mathbf{1}_{  L} \sum_{I\in \mathcal I(L)}   S_{I } f\right\|_{L^{1,\infty}(\R; \mathcal X)} \leq
\sum_{L \in \mathcal L} \left\|    \sum_{I\in \mathcal I(L)}   S_{I } f\right\|_{L^{1,\infty}(\R; \mathcal X)}.
\end{equation}
To estimate each summand on the right hand side of the last display, we use the localization trick.  Referring to \eqref{e:gamma}, set $\bar S_I g\coloneqq S_I(\gamma_L g)$. We then have
\begin{equation}
\label{e:engbd10}
\begin{split}&\quad
\sum_{L \in \mathcal L} \left\|    \sum_{I\in \mathcal I(L)}   S_{I } f\right\|_{L^{1,\infty}(\R; \mathcal X)} =
\sum_{L \in \mathcal L} \left\|    \sum_{I\in \mathcal I(L)}  \tilde S_{I }(\gamma_L^{-1} f)\right\|_{L^{1,\infty}(\R; \mathcal X)}\\
&\lesssim  \sum_{L \in \mathcal L} \left\|   \gamma_L^{-1} f\right\|_{L^{1}(\R; \mathcal X)} \lesssim \sum_{L \in \mathcal L} |L|\inf_{L} \mathrm{M}F \lesssim \lambda |K|
\end{split}
\end{equation}
as each $\bar S_I\in \mathbb S_{P}$ where $P \in \mathsf T $ is the unique tri-tile with  $I_P=I$, and therefore each $L$-th summand on the right hand side of the first line is a Calder\'on-Zygmund operator. We achieve \eqref{e:engbd2} by putting together \eqref{e:engbd30}, \eqref{e:engbd3}, \eqref{e:engbd6}, \eqref{e:engbd7}, \eqref{e:engbd8}, \eqref{e:engbd9} and \eqref{e:engbd10}. The proof of the lemma is then complete.
\end{proof}
\bibliography{OP_Multilin_7AUG}
\bibliographystyle{amsplain}
\end{document}